\pdfoutput=1
\documentclass[10pt, letterpaper, reqno]{amsart}

\usepackage[utf8]{inputenc}
\usepackage[dvipsnames]{xcolor}
\usepackage{xspace}
\usepackage[left=2.4cm,right=2.4cm,top=2.5cm,bottom=2.5cm]{geometry}
\usepackage{amsmath,amsfonts, amssymb, stmaryrd}
\usepackage{enumerate}
\usepackage{mathtools}
\usepackage{thmtools}
\usepackage{tikz-cd}
\usepackage{bm}
\usepackage{mathrsfs}
\usepackage[english]{babel}
\usepackage[pagebackref, colorlinks, linkcolor=blue, citecolor=blue, urlcolor=teal, hypertexnames=false]{hyperref}
\usepackage{marginnote}

\usepackage[capitalize]{cleveref}

\numberwithin{equation}{section}

\theoremstyle{plain}
\newtheorem{thm}{Theorem}[subsection]
\newtheorem{prop}[thm]{Proposition}

\newtheorem{cor}[thm]{Corollary}
\newtheorem{thmintro}{Theorem}

\theoremstyle{definition}
\newtheorem{defi}[thm]{Definition}

\newtheorem{notation}[thm]{Notation}

\declaretheorem[name=Remark,sibling=thm,qed={\lower-0.3ex\hbox{$\diamond$}}]{rmk}
\declaretheorem[name=Remark,unnumbered,qed={\lower-0.3ex\hbox{$\diamond$}}]{remarknonumber}


\makeatletter
\newcommand{\colim@}[2]{%
  \vtop{\m@th\ialign{##\cr
    \hfil$#1\operator@font colim$\hfil\cr
    \noalign{\nointerlineskip\kern1.5\ex@}#2\cr
    \noalign{\nointerlineskip\kern-\ex@}\cr}}%
}
\newcommand{\colim}{%
  \mathop{\mathpalette\colim@{\rightarrowfill@\scriptscriptstyle}}\nmlimits@
}
\renewcommand{\varprojlim}{%
  \mathop{\mathpalette\varlim@{\leftarrowfill@\scriptscriptstyle}}\nmlimits@
}
\renewcommand{\varinjlim}{%
  \mathop{\mathpalette\varlim@{\rightarrowfill@\scriptscriptstyle}}\nmlimits@
}
\newcommand{\Rlim}{%
  \mathop{\mathrm{R}\mathpalette\varlim@{\leftarrowfill@\scriptscriptstyle}}\nmlimits@
}
\makeatother

\newenvironment{smatrix}{\left( \begin{smallmatrix} } {\end{smallmatrix} \right) }

\newcommand{\stbt}[4]{\begin{smatrix}#1 & #2 \\ #3 & #4\end{smatrix}}

\renewcommand{\le}{\leqslant}
\renewcommand{\geq}{\ge}
\renewcommand{\ge}{\geqslant}


\newcommand{\Aa}{\mathcal{A}}
\newcommand{\Ee}{\mathcal{E}}
\newcommand{\hh}{\mathcal{H}}
\newcommand{\Mm}{\mathcal{M}}
\newcommand{\Nn}{\mathcal{N}}
\newcommand{\Oo}{\mathcal{O}}
\newcommand{\Ss}{\mathcal{S}}
\newcommand{\Uu}{\mathcal{U}}
\newcommand{\Ww}{\mathcal{W}}

\newcommand{\M}{\mathfrak{M}}
\newcommand{\Z}{\mathbb{Z}}
\newcommand{\Q}{\mathbb{Q}}
\newcommand{\Qp}{\Q_p}
\newcommand{\Zp}{\Z_p}
\newcommand{\Ff}{\mathbb{F}}
\newcommand{\Fp}{\Ff_{p}}

\newcommand{\R}{\mathbb{R}}
\newcommand{\C}{\mathbb{C}}
\newcommand{\A}{\mathbb{A}}
\newcommand{\T}{\mathbb{T}}

\newcommand{\p}{\mathfrak{p}}
\newcommand{\cc}{\mathfrak{c}}

\newcommand{\X}{\mathfrak{X}}
\newcommand{\kk}{\underline{k}}

\newcommand{\f}{\mathrm{f}}
\newcommand{\Af}{\A_\f}
\newcommand{\AFf}{\A_{F, \f}}

\newcommand{\Ig}{\mathfrak{Ig}}

\newcommand{\sym}{\operatorname{Sym}}
\newcommand{\imp}{\mathrm{imp}}
\newcommand{\new}{\mathrm{new}}

\newcommand{\res}{\operatorname{Res}}

\newcommand{\pr}{\operatorname{pr}}

\newcommand{\Pif}{\Pi_\f}

\newcommand{\m}{\mathrm{m}}
\newcommand{\et}{\text{\textup{\'et}}}

\newcommand{\naive}{\text{\textup{na\"\i ve}}}

\DeclareMathOperator{\ord}{ord}

\DeclareMathOperator{\ch}{ch}
\DeclareMathOperator{\As}{As}
\DeclareMathOperator{\GL}{GL}

\DeclareMathOperator{\RG}{R\Gamma}
\DeclareMathOperator{\tor}{tor}

\DeclareMathOperator{\dr}{dR}
\DeclareMathOperator{\tr}{Tr}

\DeclareMathOperator{\nm}{Nm}

\DeclareMathOperator{\myom}{\underline{\omega}}

\newcommand{\sph}{\mathrm{sph}} 
\newcommand{\myomk}{\myom^{(\kk, w)}}
\setcounter{tocdepth}{2}

\makeatletter

\newcommand*{\da@rightarrow}{\mathchar"0\hexnumber@\symAMSa 4B }
\newcommand*{\da@leftarrow}{\mathchar"0\hexnumber@\symAMSa 4C }
\newcommand*{\xdashrightarrow}[2][]{%
  \mathrel{%
    \mathpalette{\da@xarrow{#1}{#2}{}\da@rightarrow{\,}{}}{}%
  }%
}
\newcommand{\xdashleftarrow}[2][]{%
  \mathrel{%
    \mathpalette{\da@xarrow{#1}{#2}\da@leftarrow{}{}{\,}}{}%
  }%
}
\newcommand*{\da@xarrow}[7]{%
  \sbox0{$\ifx#7\scriptstyle\scriptscriptstyle\else\scriptstyle\fi#5#1#6\m@th$}%
  \sbox2{$\ifx#7\scriptstyle\scriptscriptstyle\else\scriptstyle\fi#5#2#6\m@th$}%
  \sbox4{$#7\dabar@\m@th$}%
  \dimen@=\wd0 %
  \ifdim\wd2 >\dimen@
    \dimen@=\wd2 %
  \fi
  \count@=2 %
  \def\da@bars{\dabar@\dabar@}%
  \@whiledim\count@\wd4<\dimen@\do{%
    \advance\count@\@ne
    \expandafter\def\expandafter\da@bars\expandafter{%
      \da@bars
      \dabar@
    }%
  }%
  \mathrel{#3}%
  \mathrel{%
    \mathop{\da@bars}\limits
    \ifx\\#1\\%
    \else
      _{\copy0}%
    \fi
    \ifx\\#2\\%
    \else
      ^{\copy2}%
    \fi
  }%
  \mathrel{#4}%
}

\makeatother

\makeatletter
\@namedef{subjclassname@2020}{%
  \textup{2020} Mathematics Subject Classification}
\makeatother

\newcommand{\fix}[1]{#1}
\newenvironment{longfix}{\relax}{\relax}

\begin{document}

\title[$p$-adic Asai L-functions]{$p$-adic Asai $L$-functions for quadratic Hilbert eigenforms}
\subjclass[2020]{ 11F41, 11F33, 11G18, 14G35}
\keywords{Hilbert modular varieties, $p$-adic modular forms, higher Hida theory, coherent cohomology of Shimura varieties}
\date{\today}

\author{Giada Grossi}
\address[G.~Grossi]{CNRS, Institut Galilée, Université Sorbonne Paris Nord, 93430 Villetaneuse, France}
\email{grossi@math.univ-paris13.fr}

\author{David Loeffler}
\address[D.~Loeffler]{Mathematics Institute, University of Warwick, Coventry CV4 7AL, UK}
\curraddr{UniDistance Suisse, Schinerstrasse 18, 3900 Brig, Switzerland}
\email{david.loeffler@unidistance.ch}
\thanks{Supported by European Research Council Consolidator Grant No. 101001051 (\textsc{ShimBSD}) (Loeffler) and US National Science Foundation Grant No. DMS-1928930 (all authors)}

\author{Sarah Livia Zerbes}
\address[S.L.~Zerbes]{Department of Mathematics, ETH Z\"urich, R\"amistrasse 101, 8092 Z\"urich, Switzerland}
\email{sarah.zerbes@math.ethz.ch}

\begin{abstract}
 We construct $p$-adic Asai $L$-functions for cuspidal automorphic representations $\Pi$ of ${\rm Res}_{F/\Q}{\rm GL}_2$, where $F$ is a real quadratic field in which $p$ splits. Our method relies on higher Hida theory for Hilbert modular surfaces with Iwahori level at one prime above $p$.
\end{abstract}

\maketitle
\tableofcontents
\addtocontents{toc}{\protect\setcounter{tocdepth}{1}}


\section{Introduction}

 The study of critical values of $L$-functions is central in many areas of modern number theory. The Bloch-–Kato and Beilinson conjectures relate critical values of complex $L$-functions to certain arithmetic data; and much progress towards such conjectures has been made by $p$-adic methods, in which a fundamental role is played by $p$-adic $L$-functions, interpolating $p$-adically the algebraic parts of critical values of a given complex $L$-function or family of $L$-functions. In this paper, we construct such a $p$-adic counterpart for the \emph{Asai}, or \emph{twisted tensor}, $L$-function attached to a Hilbert modular form over a real quadratic field $F$.

 \subsection{Higher Hida theory for $\p_1$-ordinary cohomology}
 
  The main tool we shall use is a version of \emph{higher Hida theory} for Hilbert modular forms at primes $p = \p_1 \p_2$ split in $F$. Such a theory was initially introduced in the first author's paper \cite{myhida}; in this paper we develop a variant assuming only ordinarity at a single prime above $p$, and we study functoriality properties of this theory for the embedding $\GL_2 \hookrightarrow \res_{F/\Q} \GL_2$, defining pullback and pushforward maps between $p$-adic cohomology spaces for these two groups.
 
  For our first result, write $X_{G, 0}(\p_1)_{\Qp}$ for the compactified Hilbert modular surface \fix{over $\Qp$} with Iwahori level at $\p_1$ and spherical level at $\p_2$ (and some fixed prime-to-$p$ level structure $K_G^{(p)}$). \fix{Denote by $\sigma_1,\sigma_2$ the two real embeddings of $F$; fixing an isomorphism $\overline{\Q}_p \cong \C$,  we may assume $\sigma_i$ corresponds to the prime $\p_i$.} 
  We also fix $(k_1, k_2, w) \in \Z^3$ with $k_1 = k_2 = w \bmod 2$ and \fix{for any integer $a$ we denote by $\myom^{(k_1+2a,k_2,w)}$ the automorphic sheaf on $X_{G, 0}(\p_1)_{\Qp}$ of weight $(k_1+2a,k_2,w)$ (see \eqref{eq:defiomegaGmodel}).}

  \begin{thmintro}
   \label{thmintro:classicity}
   The exists a perfect complex $M_G^\bullet(k_1 + 2\kappa, k_2, w)$ of modules over $\Lambda = \Zp[[\Zp^\times]]$, with an action of prime-to-$p$ Hecke operators, such that for all integers $a \in \Z$ with $k_1 + 2a \ge 2$, we have a Hecke-equivariant isomorphism
   \[
    M_G^{\bullet}(k_1 + 2\kappa, k_2, w)\otimes^{\mathbb{L}}_{\Lambda,a} \Qp \simeq e({U}_{\p_1})\RG\left(X_{G, 0}(\p_1)_{\Qp}, \myom^{(k_1+2a,k_2,w)}\right),
   \]
   \fix{where $\kappa$ denotes the universal character of $\Lambda$.}
   There is also an analogous complex $M_G^\bullet(k_1 + 2\kappa, k_2, w; -D)$ interpolating the cohomology of the cuspidal sheaves $\myom^{(k_1+2a,k_2,w)}(-D)$.
  \end{thmintro}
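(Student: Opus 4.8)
The plan is to follow the template of higher Hida theory developed in \cite{myhida}, modified so that ordinarity is imposed only at $\p_1$. First I would fix an integral model of $X_{G,0}(\p_1)$ over $R$, pass to its $p$-adic formal completion $\mathfrak{X}$, and cut out the \emph{$\p_1$-ordinary locus}: the open formal subscheme over which the $\p_1$-divisible group of the universal abelian surface has its connected--étale filtration of the generic (ordinary) type. On this locus there is an Igusa tower trivialising the connected part of the $\p_1$-divisible group; pushing forward the structure sheaves along this tower and passing to the limit gives a sheaf of $\Lambda$-modules carrying a $\Zp^\times$-action, and twisting the classical automorphic sheaf $\myom^{(k_1,k_2,w)}$ by the universal character in the \emph{first} weight variable (the weights $k_2$ and $w$ stay fixed, since only $\p_1$-ordinarity is available) produces the interpolation sheaf $\myom^{(k_1+2\kappa,k_2,w)}$. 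By construction this sheaf is flat over $\Lambda$, and its derived specialisation along $\Lambda \to E$ at a point $a$ recovers $\myom^{(k_1+2a,k_2,w)}$ on the $\p_1$-ordinary locus of the associated rigid space.

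I would then set $M_G^\bullet(k_1+2\kappa,k_2,w) := e(U_{\p_1})\,\RG$ of this interpolation sheaf over the $\p_1$-ordinary locus (using the canonical subgroup at $\p_1$ to realise $U_{\p_1}$ as a cohomological correspondence on a suitable strict neighbourhood), and pass to the perfect complex produced by the usual ordinary-projector formalism. Perfectness over $\Lambda$ is where one uses that $U_{\p_1}$ acts compactly --- it strictly improves the radius of overconvergence at $\p_1$ --- so that its ordinary part is computed by a bounded complex of finite projective $\Lambda$-modules, with coherence on the minimal compactification controlling the boundary. The cuspidal complex $M_G^\bullet(k_1+2\kappa,k_2,w;-D)$ is constructed identically with $\myom(-D)$ in place of $\myom$, and the prime-to-$p$ Hecke action is inherited from the action on the Igusa tower.

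The substance of the statement is the control (classicity) isomorphism. By flat base change and perfectness, $M_G^\bullet(k_1+2\kappa,k_2,w)\otimes^{\mathbb{L}}_{\Lambda,a}E$ computes $e(U_{\p_1})\RG$ of $\myom^{(k_1+2a,k_2,w)}$ over the $\p_1$-ordinary locus, so it suffices to show that for $k_1+2a\ge 2$ the restriction map
\[
 e(U_{\p_1})\,\RG\!\left(X_{G,0}(\p_1)_E,\ \myom^{(k_1+2a,k_2,w)}\right)\ \longrightarrow\ e(U_{\p_1})\,\RG\!\left(\text{$\p_1$-ord.\ locus},\ \myom^{(k_1+2a,k_2,w)}\right)
\]
is an isomorphism. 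Following higher Coleman/Hida theory, I would cover $X_{G,0}(\p_1)_E$ by the $\p_1$-ordinary locus together with a strict neighbourhood of its complement, and show via the resulting excision triangle that the cohomology of that complement is annihilated by $e(U_{\p_1})$. The weight inequality $k_1+2a\ge 2$ is exactly what makes this work: near the non-$\p_1$-ordinary locus the $U_{\p_1}$-correspondence contracts towards the ordinary locus, so on the cohomology of a strict neighbourhood of the complement $U_{\p_1}$ becomes divisible by a power of $p$ growing with $k_1+2a-1>0$, hence topologically nilpotent there, and its ordinary part vanishes. (Alternatively one may invoke the dual BGG resolution to reduce to a classicity statement for $\myom^{(k_1+2a,k_2,w)}$ in each cohomological degree.)

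The main obstacle I anticipate is precisely this vanishing: establishing that $e(U_{\p_1})$ kills the cohomology of a strict neighbourhood of the non-$\p_1$-ordinary locus in \emph{all} cohomological degrees --- not merely in degree $0$, where the classical Coleman argument suffices --- under the single inequality $k_1+2a\ge 2$. This requires a careful local analysis of the $\p_1$-divisible group and of the $U_{\p_1}$ Hecke correspondence along the one-dimensional non-ordinary stratum at $\p_1$ (the surface carries a two-step stratification by $\p_1$- and $\p_2$-ordinarity, and only the $\p_1$-direction is being controlled here), together with care for the Iwahori level at $\p_1$, which makes the canonical-subgroup dynamics and the correspondence slightly more intricate than in the everywhere-spherical case.
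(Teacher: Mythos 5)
Your proposal reproduces the overall architecture of the paper's construction --- a one-prime Igusa tower over the $\p_1$-multiplicative locus, an interpolation sheaf twisting only the first weight variable, the definition $M_G^\bullet \coloneqq e(U_{\p_1})\RG(\cdots)$, and the identification of the control theorem as the crux --- but the mechanism you propose for the control theorem is the rigid-analytic (higher Coleman) one, whereas the paper argues integrally in the style of Pilloni and Boxer--Pilloni. Concretely, the paper introduces the divisor $D^{1-\et}$ on the special fibre cut out by the section of $\delta = \omega_{\Aa/H,1}\otimes\omega_{\Aa,1}^{-1}$, twists the sheaf to $\tilde\myom^{(\kk,w)} = \myom^{(\kk,w)}(t_1 D^{1-\et})$, and shows that for $k_1 \ge 2$ the normalised correspondence $U_{\p_1}$ is regular on this twisted integral model and supported on the component $C(\p_1)_{\Fp}^{\m,\m}$ of the special fibre of the Hecke correspondence; this yields a factorisation of $U_{\p_1}$ through successively larger twists $\tilde\myom^{(\kk,w)}(nD^{1-\et})$, whence $e(U_{\p_1})$ identifies $\RG(X,\tilde\myom^{(\kk,w)})$ with $\RG(\X^{1-\m},\myom^{(\kk,w)})$, the colimit over $n$. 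Because this is an argument about the correspondence at the level of complexes of sheaves, the identification holds in all cohomological degrees simultaneously --- the point you (rightly) flag as the hard part of a section-by-section contraction argument comes for free. Your strict-neighbourhood/excision version would plausibly establish the classicity isomorphism after inverting $p$, but note two caveats: (i) perfectness over the Iwasawa algebra $\Lambda = R[[\Zp^\times]]$, as opposed to projectivity over an affinoid weight algebra, really requires the integral formulation (finiteness of $e(U_{\p_1})\RG$ modulo $\varpi^m$ plus the projector formalism of Pilloni), and ``compactness of $U_{\p_1}$'' in the overconvergent sense does not directly apply to the $\Lambda$-adic Igusa sheaf; and (ii) the relevant locus at Iwahori level is the $\p_1$-\emph{multiplicative} locus (where the level subgroup $H$ is of type $\mu_{p^n}$), not the full $\p_1$-ordinary locus, and the compatibility of the interpolated $U_{\p_1}$ on the Igusa sheaf with the classically normalised operator $p^{-\inf\{(w+k_1)/2,\,(w+2-k_1)/2\}}U^{\naive}_{\p_1}$ in each specialisation is a genuine computation (the paper's Proposition \ref{prop:compatibilityheckeigusa}) that your outline elides.
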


  See \cref{thm:classicalityigusa} below. Here $e({U}_{\p_1})$ is the ordinary projector associated to the Hecke operator at $\p_1$; we do not impose any ordinarity conditions at $\p_2$. 
 
  \begin{remarknonumber}
   Observe that the perfectness assertion in Theorem \ref{thmintro:classicity} does not imply that all ordinary eigensystems can be interpolated in one-parameter families (with $k_1$ varying and $k_2$ fixed), since we are not claiming that the complex is concentrated in a single degree; so its cohomology groups may not be flat over $\Lambda$ and their formation may not commute with specialization. \fix{(We do show that the cuspidal complex $M_G^\bullet(k_1 + 2\kappa, k_2, w; -D)$ is concentrated in a single degree under an additional assumption on $k_2$, see \cref{rmk:onedegree}.)}
   In \cref{sect:eigenfamilies} we prove a slightly weaker result, showing that eigensystems associated to classical ordinary cusp forms of regular weight do vary in families over a sufficiently small open affinoid in weight space. 
  \end{remarknonumber}
 
  For our second result, let $M_H(\tau)$ denote the space of $\Lambda$-adic elliptic modular forms of weight $\tau :\Zp^\times \to \Lambda^\times$ and tame level $K_H^{(p)} = K_G^{(p)} \cap H$ \fix{(see \cref{sect:funcIgusa} below)}.
 
  \begin{thmintro}
   There exists a pushforward map of $\Lambda$-modules
   \[ \iota_\star : M_H(k_1 - k_2 + 2\kappa) \to H^1(M_G^\bullet(k_1 + 2\kappa, 2 - k_2, w)) \]
   which interpolates, via the isomorphism of \cref{thmintro:classicity}, the projection to the $U_{\p_1}$-ordinary parts of the pushforward maps in coherent cohomology in all weights $(k_1 + 2a, 2-k_2, w)$ with $k_1 + 2a \ge 2$.
  \end{thmintro}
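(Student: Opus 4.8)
The plan is to realise $\iota_\star$ as a $\Lambda$-adic incarnation of the coherent Gysin pushforward along the closed immersion $\iota\colon X_H \hookrightarrow X_{G,0}(\p_1)$ presenting the modular curve of tame level $K_H^{(p)} = K_G^{(p)}\cap H$ as a Hirzebruch–Zagier divisor in the Hilbert modular surface. First I would record the geometry of this embedding, working over compatible toroidal compactifications: $\iota$ has relative codimension one, and the adjunction formula together with Kodaira–Spencer identifies the determinant of its conormal bundle with an explicit automorphic line bundle. Concretely this produces, on $X_H$, a canonical isomorphism
\[
 \iota^{*}\myom^{(k_1+2a,\,2-k_2,\,w)} \otimes \det N_{X_H/X_{G,0}(\p_1)} \;\cong\; \myom^{\,k_1-k_2+2a}
\]
up to a twist by a power of the norm character — this is exactly the source of both the substitution $k_2 \rightsquigarrow 2-k_2$ and the cohomological degree shift appearing in the statement. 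The classical Gysin map in weight $(k_1+2a,2-k_2,w)$ is then the composite of the coherent pushforward along the closed immersion with the trace map of the associated local-cohomology triangle, namely
\[
 \iota_\star^{[a]}\colon H^0\!\left(X_{H,E},\, \myom^{\,k_1-k_2+2a}\right) \longrightarrow H^1\!\left(X_{G,0}(\p_1)_E,\, \myom^{(k_1+2a,\,2-k_2,\,w)}\right).
\]

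Next I would lift this to the formal models underlying higher Hida theory. The key geometric input is that $\iota$ carries the ordinary locus of $X_H$ into the $\p_1$-ordinary locus of $X_{G,0}(\p_1)$: under $\GL_2\hookrightarrow\res_{F/\Q}\GL_2$ the universal elliptic curve becomes the $\p_1$-part of the universal abelian surface, so $p$-ordinarity on the source is precisely $\p_1$-ordinarity of that part, and no condition is required at $\p_2$ — which is why the construction is compatible with the hypotheses of \cref{thmintro:classicity}. Running the Gysin construction through the formal Igusa-type schemes computing, on one side, classical Hida theory for $H$ (so $M_H(\tau)$, concentrated in degree $0$) and, on the other, the complex $M_G^\bullet(k_1+2\kappa,2-k_2,w)$ of \cref{thmintro:classicity} (built from $\myom$-cohomology over the $\p_1$-Igusa tower), yields a morphism of perfect $\Lambda$-complexes $M_H(k_1-k_2+2\kappa)\to M_G^\bullet(k_1+2\kappa,2-k_2,w)[1]$, hence a $\Lambda$-linear map to $H^1$. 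A point to be careful about is that the Gysin map is \emph{integral} on these models: the determinant of the conormal bundle extends to an honest line bundle over the Igusa tower, and the local-cohomology triangle is already defined over $R$, so no denominators appear.

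Finally I would verify Hecke-equivariance and the interpolation property, which is where most of the work lies. Equivariance for the prime-to-$p$ Hecke algebra is formal, from functoriality of Gysin maps under the finite étale correspondences defining the Hecke operators, which are compatible with $\iota$ by construction of the level structures. The delicate point — and the principal obstacle — is the behaviour at $p$: the pushforward intertwines $U_{\p_1}$ on $G$ with $U_p$ on $H$ only up to a normalising power of $p$ (coming from the conormal twist and the degree of $\iota$ on $\p_1$-level structures) and up to error terms that are killed after applying the ordinary projector $e(U_{\p_1})$; one must check that $e(U_{\p_1})\iota_\star$ is thereby a well-defined $\Lambda$-linear map on the \emph{whole} (already $U_p$-ordinary) space $M_H(k_1-k_2+2\kappa)$, and keep track of the normalising factor precisely enough to preserve integrality and the correct interpolation constants. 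The interpolation statement then follows by specialising at each $a\in\Z$ with $k_1+2a\ge 2$: by construction the formation of $\iota_\star$ commutes with $-\otimes^{\mathbb L}_{\Lambda,a}E$, and under the isomorphisms of \cref{thmintro:classicity} on source and target it recovers $e(U_{\p_1})\circ\iota_\star^{[a]}$; since these weights are Zariski-dense in $\Sp \Lambda$ and the $\Lambda$-modules involved are suitably separated, this both pins $\iota_\star$ down and completes the proof.
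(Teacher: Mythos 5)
Your proposal follows essentially the same route as the paper: the classical pushforward is defined as the Serre/Gysin dual of the pullback $\iota^\star$ (which is exactly where $k_2 \rightsquigarrow 2-k_2$ and the degree shift come from), it restricts to the multiplicative loci because $\iota^\star D^{1-\et} = D^{\et}$ (so the twisted integral models and their colimits match up), and it interpolates over $\Lambda$ via the Cartesian compatibility of the $H$- and $G$-Igusa towers, after which one simply composes with $e(U_{\p_1})$ on the target. One small correction: the source $M_H(k_1-k_2+2\kappa)$ is the full space of $\Lambda$-adic $p$-adic modular forms (sections of the Igusa-tower sheaf over $\X_H^{\m}$), not its $U_p$-ordinary part — this is essential for the later application to non-ordinary Eisenstein families — so no intertwining of $U_p$ with $U_{\p_1}$ needs to be checked; the ordinary projector is applied only on the $G$-side.
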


  Note that the integer-weight specialisations of forms in the space $M_H(\tau)$ can be classical holomorphic modular forms of Iwahori level at $p$, but may also include holomorphic forms of higher $p$-power levels, or nearly-holomorphic modular forms (via the $p$-adic unit root splitting); and we show that the specialisations of our $p$-adic pushforward map are compatible with algebraic pushforward maps on these larger spaces. This is crucial for our applications to $p$-adic $L$-functions, since the $\GL_2$ Eisenstein series we consider are only nearly-holomorphic in general.

  \begin{remarknonumber}
   In this paper we shall apply Theorem B to pushforwards of Eisenstein families, but the case of pushforwards of cuspidal families is also interesting, since these pushforwards are related to the square roots of twisted triple-product $L$-functions. We hope to investigate this case in future work.
  \end{remarknonumber}

 \subsection{One-variable $p$-adic $L$-functions}
 
  Our first application of the above theory is to construct a one-variable $p$-adic Asai $L$-function attached to a single cuspidal automorphic representation $\Pi$ of $G \coloneqq \operatorname{Res}_{F/\Q}\GL_2$, interpolating critical values of the Asai $L$-function of $\Pi$ and its twists by $p$-power-conductor Dirichlet characters.
 
  Our conventions are such that $\Pi$ is unitary, and its central character $\chi_\Pi$ is a finite-order Gr\"ossencharacter of $F$. The weight of $\Pi$ is given by a pair of positive integers $(k_1, k_2)$; we shall only consider ``paritious'' weights (i.e.~with $k_1 = k_2 \bmod 2$), and in order to have critical values for the Asai $L$-function, we need to assume $k_1 \ne k_2$. Without loss of generality we suppose $k_1 > k_2 \ge 1$. We suppose $p$ is a prime split in $F$ and coprime to the level of $\Pi$.

  We write $L_{\mathrm{As}}(\Pi, s)$ for the Asai $L$-function (normalised in the analytic fashion, so the centre of the functional equation is at $s = \tfrac{1}{2}$), and $L_{\As}(\Pi, \chi, s)$ for the $L$-function twisted by a Dirichlet character $\chi$. Here we define the local $L$-factors at bad primes via the local Langlands correspondence, or equivalently as the common denominator of the Asai zeta integral defined in \S \ref{sect:zetadef} (for the equivalence of these definitions see \cite[Remark 2.14]{loefflerwilliams18}). The critical points of $L_{\mathrm{As}}(\Pi, \chi, s)$, for any $\chi$, are given by
  \begin{equation}\label{eq:critrangeintro}
  \tag{crit}
  s \in \Z, \qquad \tfrac{2-k_1+k_2}{2} \le s \le \tfrac{k_1-k_2}{2}.
  \end{equation}

  The general conjectures of Coates--Perrin-Riou \cite{coatesperrin} and Panchishkin \cite{pan94} on the expected form of $p$-adic $L$-functions associated to motives over $\Q$, in this case, predict the following: if a suitable ``ordinarity'' condition (the Panchishkin condition) holds, the critical $L$-values of $L_{\As}(\Pi, \chi, s)$, for Dirichlet characters $\chi$ of $p$-power conductor, can be interpolated by a $p$-adic measure after multiplying by a certain Euler factor $\fix{\Ee}_p(\As(\Pi), \chi, s)$. See \cite[Definition 5.5]{pan94} for the definition of Panchishkin's condition; in our setting (with $k_1 > k_2$) the Panchishkin condition amounts to ordinarity of $\Pi$ at $\p_1$, while the stronger ordinarity condition assumed in \cite{coatesperrin} amounts to ordinarity at both $\p_1$ and $\p_2$.

  \begin{notation}\label{not:alphabeta}
   For $i = 1, 2$, we write $\alpha_i^\circ, \beta_i^\circ$ for the Satake parameters of $\Pi_{\p_i}$ (unitarily normalised, so that $|\alpha_i^\circ| = |\beta_i^\circ| = 1$), and define
   \[ \alpha_i = p^{(k_i - 1) / 2} \alpha_i^\circ, \quad \beta_i = p^{(k_i - 1) / 2} \beta_i^\circ. \]
  \end{notation}
 
  We will see below that $\alpha_1, \beta_1$ are $p$-adically integral (for our chosen embedding into $\overline{\Q}_p$), and the ordinarity condition at $\p_1$ is equivalent to precisely one of $\alpha_1$ and $\beta_1$ being a $p$-adic unit, which we may take to be $\alpha_1$.

  \begin{defi}\label{def:peulerfactor}
   Suppose $\Pi$ is ordinary at $\p_1$. For $s \in \Z$ and $\nu$ a Dirichlet character of $p$-power conductor, define
   \[
    \Ee_p(\As(\Pi), \nu, s) =
    \begin{cases}
     \left(1 - \frac{p^{s-1}}{\alpha_1^\circ \alpha_2^\circ}\right)
     \left(1 - \frac{p^{s-1}}{\alpha_1^\circ \beta_2^\circ}\right)
     \left(1 - \frac{\beta_1^\circ \alpha_2^\circ}{p^s}\right)
     \left(1 - \frac{\beta_1^\circ \beta_2^\circ}{p^s}\right) &
     \text{if $\nu = 1$}, \\[2mm]
     \tfrac{1}{G(\nu)^2} \cdot \left( \tfrac{p^{2s}}{(\alpha_1^\circ)^2\alpha_2^\circ\beta_2^\circ}\right)^r & \text{if $\nu$ has conductor $p^r$, $r \ge 1$}.
    \end{cases}
   \]
  \end{defi}
  where $G(\nu) = \sum_{a \in (\Z / p^r)^\times} \nu(a) e^{2\pi i a / p^r}$ is the Gauss sum. This is exactly the modified Euler factor predicted by the conjectures of \cite{coatesperrin, pan94}.

  \begin{thmintro}\label{thmintro:cycloL}
   Suppose $\Pi$ is ordinary at $\p_1$ and its level is coprime to $p$. Then there exists a $p$-adic measure $\mathcal{L}_{p, \As}(\Pi) \in \Lambda_E$\fix{$=\Z_p[[\Z_{p}^{\times} ]]\otimes E$}, for $E / \Qp$ a sufficiently large finite extension, and a period $\Omega(\Pi) \in \C^\times$, such that for every $s$ satisfying \eqref{eq:critrangeintro} and Dirichlet character $\nu$ of conductor $p^r$, for $r \ge 0$, we have
   \[
    \int_{\Z_p^\times}\nu^{-1}(x)x^s\, \mathrm{d}\mathcal{L}_{p, \As}(\Pi) = \Ee_p(\As(\Pi), \nu, s) \cdot \frac{\Gamma(s + \tfrac{k_1+k_2-2}{2}) \Gamma(s + \tfrac{k_1 - k_2}{2})}{2^{(k_1-2)} i^{(1-k_2)} (-2\pi i)^{(2s + k_1 - 1)}} \cdot \frac{L_{\As}\left(\Pi, \nu, s\right)}{\Omega(\Pi)}.
   \]
  \end{thmintro}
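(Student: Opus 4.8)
\emph{Strategy.} The construction is a $p$-adic avatar of Asai's classical integral representation, realised in the coherent cohomology of the Hilbert modular surface $X_{G,0}(\p_1)$ via the higher Hida theory of Theorems~A and~B. Recall Asai's identity: for suitably chosen data, $L_{\As}(\Pi,\nu,s)$ equals, up to explicit local zeta integrals at $\infty$ and at the finite places dividing the level of $\Pi$ and the conductor of $\nu$, the period integral over $\GL_2(\Q)\backslash\GL_2(\A_\Q)$ of $\varphi_\Pi$ against a $\GL_2/\Q$ Eisenstein series built from a Schwartz function incorporating $\nu$. The first step is to recast this as an identity in coherent cohomology. Since $k_1>k_2$, the representation $\Pi$ lies outside the holomorphic range, and its relevant cohomological incarnation is a class $\omega_\Pi$ in a \emph{degree-one} coherent cohomology group of $X_{G,0}(\p_1)_E$ whose weight is ``reflected'' in the $\sigma_2$-direction; it is precisely here that higher (rather than classical) Hida theory is forced on us. The Eisenstein series gives a class on the modular curve $X_H$; pushing it forward along $\iota\colon X_H\hookrightarrow X_G$ and pairing against a Serre-dual avatar of $\omega_\Pi$ on the surface recovers Asai's integral, up to the local zeta integrals and up to a period $\Omega(\Pi)\in\C^\times$ comparing the natural $E$-rational structure on $\omega_\Pi$ with the complex-analytic normalisation. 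A crucial subtlety, already flagged in the introduction, is that the $\GL_2$ Eisenstein series occurring at the various critical points are only nearly holomorphic in general, so the pairing must be set up on the nearly-holomorphic enlargements via the $p$-adic unit-root splitting; equivalently, the shift from the central critical point to a general one in \eqref{eq:critrangeintro} is implemented by iterated $p$-adic Maass--Shimura operators.

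\emph{$p$-adic interpolation.} Ordinarity of $\Pi$ at $\p_1$ ensures that its $\p_1$-stabilisation, with unit $U_{\p_1}$-eigenvalue $\alpha_1$, survives the projector $e(U_{\p_1})$. Localising the cuspidal ordinary complex of Theorem~A at the Hecke maximal ideal $\mathfrak{m}_\Pi$ attached to $\Pi$, and using multiplicity one together with the concentration of the localised complex in a single cohomological degree, I would extract a $\Lambda$-adic eigenclass $\omega_\Pi^\Lambda$ whose classical specialisations are the (period-normalised) coherent cohomology classes of $\Pi$ and of its Hida-theoretic / nearly-holomorphic translates; Serre duality on $X_{G,0}(\p_1)$ then interpolates to a perfect $\Lambda$-bilinear pairing against $H^1\!\big(M_G^\bullet(k_1+2\kappa,2-k_2,w)\big)$. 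On the Eisenstein side I would assemble a $\Lambda$-adic family $\mathcal{E}_\Lambda\in M_H(k_1-k_2+2\kappa)$ --- a routine modification of the classical Eisenstein measure on $\GL_2/\Q$ --- whose specialisation at the arithmetic point corresponding to a pair $(\nu,s)$ as in \eqref{eq:critrangeintro} is (the $p$-stabilisation of) the Eisenstein series entering Asai's integral for $L_{\As}(\Pi,\nu,s)$. Theorem~B places $\iota_\star(\mathcal{E}_\Lambda)$ in $H^1\!\big(M_G^\bullet(k_1+2\kappa,2-k_2,w)\big)$, and pairing it against $\omega_\Pi^\Lambda$ defines $\mathcal{L}_{p,\As}(\Pi)\in\Lambda_E$.

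\emph{Interpolation formula.} Fix $(\nu,s)$ with $s$ as in \eqref{eq:critrangeintro}. The interpolation property of Theorem~A, together with the compatibility of $\iota_\star$ with the classical pushforwards on nearly-holomorphic forms asserted in Theorem~B, identifies the value of $\mathcal{L}_{p,\As}(\Pi)$ at $\nu^{-1}(x)x^s$ with the classical (Serre-duality) pairing of $\omega_\Pi$ against the pushforward of the relevant nearly-holomorphic Eisenstein class. The cohomological Asai integral of the first step then turns this into $L_{\As}(\Pi,\nu,s)/\Omega(\Pi)$, multiplied by the archimedean zeta integral --- a Barnes-type integral for the pertinent pair of discrete series, which produces the $\Gamma$-factors $\Gamma(s+\tfrac{k_1+k_2-2}{2})$ and $\Gamma(s+\tfrac{k_1-k_2}{2})$, the power of $-2\pi i$, and the constants $2^{k_1-2}i^{1-k_2}$ --- multiplied by the local zeta integral at $p$, which is exactly $\E_p(\As(\Pi),\nu,s)$: for $\nu=1$ it records the $\p_1$-stabilisation together with the $p$-stabilisation of the Eisenstein series (the four-factor product in $\alpha_i^\circ,\beta_i^\circ$), while for $\nu$ of conductor $p^r$ the ramified computation contributes the normalisation $G(\nu)^{-2}$ and the factor $\big(p^{2s}/(\alpha_1^\circ)^2\alpha_2^\circ\beta_2^\circ\big)^r$ --- matching the conjectures of \cite{coatesperrin,pan94}. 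Since the characters $\nu^{-1}(x)x^s$ are Zariski-dense in $\Sp\Lambda_E$, this determines $\mathcal{L}_{p,\As}(\Pi)$ uniquely.

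\emph{Main obstacle.} The substantive work is concentrated in two places. The first is the explicit evaluation of the local zeta integrals: the archimedean one (to obtain precisely the two $\Gamma$-factors and the powers of $\pi$ and $i$), and --- the harder of the two --- the one at $p$, where one must disentangle the interaction between the Iwahori level at $\p_1$, the ordinary projector $e(U_{\p_1})$, and the $p$-stabilisation and ramified twist of the Eisenstein series, so as to reproduce exactly the predicted Euler factor $\E_p$, including the Gauss-sum power in the ramified case. The second is the compatibility of normalisations: showing that the natural integral/rational structure carrying $\omega_\Pi^\Lambda$, transported through the $\Lambda$-adic Serre duality and the classicity isomorphism of Theorem~A, matches the complex-analytic normalisation of Asai's integral by a \emph{single} period $\Omega(\Pi)$, independent of $(\nu,s)$ --- which rests on the freeness over $\Lambda$ of the $\mathfrak{m}_\Pi$-localised ordinary cohomology, hence on multiplicity one and a degree-vanishing statement for the complex $M_G^\bullet$, as well as on keeping track of the nearly-holomorphic enlargements and their classicity throughout.
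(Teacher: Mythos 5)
Your proposal follows essentially the same route as the paper: the Asai period integral is realised as a coherent-cohomology cup product of a $\p_1$-stabilised eigenclass (with unit eigenvalue $\alpha_1$) against the pushforward $\iota_{p,\star}$ of a Katz-type Eisenstein measure, with the unit-root splitting handling the nearly-holomorphic Eisenstein series and the explicit local zeta computations at $p$ and $\infty$ producing $\E_p(\As(\Pi),\nu,s)$ and the $\Gamma$-factors. One small correction of emphasis: for this one-variable (cyclotomic) theorem the weight is fixed --- the Eisenstein family lives in weight $k_1-k_2$ with the $\Lambda$-variable entirely in the cyclotomic/nearly-holomorphic direction (so it is not an element of $M_H(k_1-k_2+2\kappa)$ with varying weight), and no $\Lambda$-adic eigenclass $\omega_\Pi^\Lambda$ or $\Lambda$-freeness of the $\mathfrak{m}_\Pi$-localised ordinary cohomology is required; those ingredients belong to the two-variable Theorem D.
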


  Note that both sides of the equality of \cref{thmintro:cycloL} are algebraic (and lie in $L(\nu)$ for a number field $L$ depending only on $\Pi$), which is an extension of existing algebraicity results due to Shimura \cite{shimura-hilbertII}.

 \subsection{Two-variable $p$-adic $L$-functions}

  Our second main result involves a slight modification of the Asai $L$-function, denoted $L_{\As}^{\rm imp}(\Pi, \chi, s)$, which we call the \emph{imprimitive} Asai $L$-function. Unlike the primitive Asai $L$-function, it is given by a straightforward formula in terms of Hecke eigenvalues: if $a_{\mathfrak{m}}^\circ(\Pi)$ is the normalised Hecke eigenvalue of the new vector of $\Pi$ at an ideal $\mathfrak{m} \trianglelefteqslant \Oo_F$, then the imprimitive Asai $L$-function is given by $(\star) \cdot \sum_{n \in \mathbb{N}} a^\circ_{n \Oo_F}(\Pi) n^{-s}$ where $(\star)$ is a Dirichlet $L$-function. (See \S\ref{sec:imprimitiveasai} for a more precise, representation-theoretic definition). This differs from $L_{\As}(\Pi, \chi, s)$ by a finite product of Euler factors, namely
  \[
   L_{\As}^{\rm imp}(\Pi,\chi,s)=L_{\As}(\Pi, \chi, s)\cdot \prod_{\ell \mid {\rm Nm}(\mathfrak{N})}C_{\ell}(\ell^{-s}\chi(\ell)),
  \]
  where $\mathfrak{N} \trianglelefteqslant \Oo_F$ is the conductor of $\Pi$, and $C_{\ell}(X)$ is a polynomial dividing the Euler factor of $L_{\As}(\Pi,\chi,s)$ at $\ell$. Since we can clearly define measures interpolating the factors $C_\ell(-)$, it follows \emph{a fortiori} from Theorem B that there exists a measure $\mathcal{L}_{p, \As}^{\imp}(\Pi)$ interpolating critical values of $L^{\imp}_{\As}(\Pi, \chi, s)$. We shall show that these ``imprimitive $p$-adic $L$-functions'' vary analytically when $\Pi$ itself is allowed to vary in a family.

  \begin{thmintro}
  \label{thmintro:twovar}
   Let $\Pi$ be ordinary at $\p_1$, with level coprime to $p$, and with weight $(k_1, k_2)$ such that $k_1 = k_2 \bmod 2$ and $k_2 \ge 2$. (We no longer assume that $k_1 > k_2$).
  
   \begin{enumerate}
    \item There exists a $p$-adic family $\underline{\Pi}$ of automorphic representations parametrised by some affinoid neighbourhood $\Uu$ of 0 in the $p$-adic weight space $\Ww$, with weight $(k_1 + 2\kappa_\Uu, k_2)$ where $\fix{\kappa_{\Uu}}$ is the universal character of $\Uu$, whose specialisation at $0 \in \Uu$ is $\Pi$.

    \item There exists an element $\mathcal{L}_{p, \As}^{\imp}(\underline{\Pi}) \in \Oo(\Uu) \mathop{\hat\otimes} \Lambda_E$, whose specialisation at any integer $a \in \Uu \cap \mathbb{Z}$ with $k_1 + 2a > k_2$ is a non-zero multiple of $\mathcal{L}_{p, \As}^{\rm imp}(\Pi[a])$, where $\Pi[a]$ is the specialisation of $\underline{\Pi}$ in weight $(k_1 + 2a, k_2)$.
   \end{enumerate}
  \end{thmintro}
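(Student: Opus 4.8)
The plan is to combine the Coleman/Hida family machinery for elliptic Eisenstein series with the $\Lambda$-adic pushforward map of Theorem B, using the higher Hida complex $M_G^\bullet$ of Theorem A as the receptacle in which everything is interpolated. For part (1), I would start from the one-variable family of overconvergent $U_{\p_1}$-ordinary Hilbert eigenforms through $\Pi$: since $\Pi$ is ordinary at $\p_1$ with level coprime to $p$, the $\p_1$-stabilisation $\Pi^\flat$ defines an $R$-point of the $U_{\p_1}$-ordinary eigenvariety cut out by $H^{\text{mid}}(M_G^\bullet(k_1 + 2\kappa, k_2, w))$ after inverting $p$, and étale-locally over weight space this eigenvariety is finite flat over an affinoid $\Uu \ni 0$ in $\cW$. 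Shrinking $\Uu$, I get the family $\underline{\Pi}$ with the stated weight $(k_1 + 2\kappa_\Uu, k_2)$ and $\underline{\Pi}|_0 = \Pi$; the Hecke eigenvalues $a_{\mathfrak m}^\circ(\underline{\Pi}) \in \Oo(\Uu)$ specialise correctly at integer points by the interpolation property of $M_G^\bullet$.

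For part (2), I would run the construction underlying Theorem~\ref{thmintro:cycloL} and Theorem B \emph{relatively over $\Uu$}. Concretely: take the $\GL_2$ Eisenstein family of weight $k_1 - k_2 + 2\kappa$ — which is a $\Lambda$-adic form in $M_H(k_1 - k_2 + 2\kappa)$ built from Katz/Eisenstein measures, and which additionally varies over the cyclotomic variable carrying the Dirichlet-character twists — push it forward via $\iota_\star$ into $H^1(M_G^\bullet(k_1 + 2\kappa, 2 - k_2, w))$, and then pair against the family $\underline{\Pi}$ using the $\Lambda$-adic cup-product / Serre-duality pairing on the perfect complex $M_G^\bullet$. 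The output lives a priori in $\Oo(\Uu) \mathop{\hat\otimes} \Lambda_E$ (the $\Oo(\Uu)$-factor from the weight family $\underline{\Pi}$ and the Eisenstein weight, the $\Lambda_E$-factor from the cyclotomic twists), and I define $\mathcal{L}_{p,\As}^{\imp}(\underline{\Pi})$ to be this element, possibly after dividing by a unit/congruence factor coming from the duality pairing. To identify its specialisation at an integer $a \in \Uu \cap \Z$ with $k_1 + 2a > k_2$, I would invoke the compatibility in Theorem A: $M_G^\bullet(k_1 + 2\kappa, 2-k_2, w) \otimes^{\mathbb L}_{\Lambda, a} E$ recovers $e(U_{\p_1})\RG(X_{G,0}(\p_1)_E, \myom^{(k_1+2a, 2-k_2, w)})$, so the specialised pairing becomes exactly the coherent-cohomology period computation proving Theorem~\ref{thmintro:cycloL} in weight $(k_1+2a, k_2)$, giving $\mathcal{L}_{p,\As}^{\imp}(\Pi[a])$ up to the nonzero period ratio $\Omega(\Pi[a])/\Omega(\underline\Pi)$ and the congruence factor — hence the ``non-zero multiple'' in the statement. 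The restriction $k_2 \ge 2$ (rather than $k_1 > k_2$) is what makes the sheaf $\myom^{(k_1+2a,2-k_2,w)}$ have cohomology concentrated in the right degree for \emph{all} $a$ in the relevant range, which is why we pass to the imprimitive $L$-function and only demand $k_1 + 2a > k_2$.

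The main obstacle I anticipate is the \emph{interpolation/specialisation step in part (2)}: one must show that the $\Lambda$-adic pairing on $M_G^\bullet$ commutes with derived base change along $\Lambda \to E$ at each relevant $a$, and that the resulting classical pairing is genuinely the one computing the critical Asai value — this requires controlling the behaviour of $H^1(M_G^\bullet)$ versus $H^{\text{mid}}$ of the specialised coherent cohomology (torsion phenomena, non-exactness of $\otimes^{\mathbb L}$), and matching normalisations of the duality pairing across the family. A secondary difficulty is ensuring the $\GL_2$ Eisenstein input really is $\Lambda$-adic of the right weight $k_1 - k_2 + 2\kappa$ with the correct nebentypus as $\Pi$ (hence $k_1, k_2$) varies, and that its pushforward under $\iota_\star$ is nonzero on the $\underline\Pi$-isotypic part — nonvanishing of the relevant local zeta integrals away from $p$, which should follow from the same computation as in the one-variable case but now needs to hold identically in the family parameter. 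Constructing $\underline\Pi$ in part (1) is comparatively routine given Theorem A, modulo the usual care about whether $0 \in \Uu$ is a smooth (étale) point of the eigenvariety, which holds here because $\Pi$ is $\p_1$-ordinary and $p$-stabilised, so the relevant Hecke algebra is étale over weight space near $0$ after possibly enlarging $E$.
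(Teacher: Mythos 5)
Your proposal is correct and follows essentially the same route as the paper: part (1) is Proposition \ref{thm:families} (the $\underline{\Pi}^\vee$-eigenspace in $H^1$ of the Igusa-tower complex is free of rank one over $\Oo(\Uu)$, because for $k_2 \ge 2$ each classical specialisation contributes to coherent cohomology in a single degree, so the partial eigenvariety is \'etale over $\cW$ near $0$), and part (2) is obtained by pairing a chosen $\Oo(\Uu)$-basis functional $\nu_{\underline{\Pi}}$ against $\iota_{p,\star}$ of the two-variable Katz--Eisenstein measure, exactly as you describe. The only point worth sharpening is that the ``non-zero multiple'' in the paper is precisely the $p$-adic period $\Omega_p(\Pi[a])$ comparing the specialisation of $\nu_{\underline{\Pi}}$ with the algebraically normalised $\p_1$-stabilised class $\nu_{\Pi[a],\p_1}$ (no extra congruence factor is introduced), and the passage to the imprimitive $L$-function is forced by the lack of control of local test vectors at the bad primes $\ell \ne p$ in the family, rather than by the $k_2 \ge 2$ hypothesis, which serves only to guarantee \'etaleness in part (1).
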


  In particular, this allows us to define the one-variable $p$-adic $L$-function $\mathcal{L}_{p, \As}^{\rm imp}(\Pi)$ without assuming $k_1 > k_2$, since the interpolating property of the two-variable measure uniquely determines the fibre at $a = 0$, even when the Asai $L$-function has no critical values. This includes the important case of base-changes of elliptic modular forms.
  
  \begin{rmk}
   It should be possible to interpolate the ``primitive'' $p$-adic $L$-functions $\mathcal{L}_{p,\As}(\Pi[a])$ as $a$ varies, not just their imprimitive versions, but this would require a careful study of local test vectors at primes $\ell \ne p$ in $p$-adic families (analogous to the analysis carried out in \cite{chenhsieh} for Rankin--Selberg $L$-functions), and we shall not undertake this here for reasons of space.
  \end{rmk}

 \subsection{Relation to other work}
 
  The results described above are in many ways parallel to the results of \cite{padicLfnct} in the case of spin $L$-functions of genus 2 Siegel modular forms. In particular, both here and in \emph{op.cit.}, the underlying $L$-value formula requires us to work with coherent cohomology classes in intermediate degrees (not corresponding to either holomorphic or anti-holomorphic forms) and hence the use of higher Hida theory is essential. This contrasts with earlier works in which the theory of ordinary $p$-adic families of modular forms has been used to construct $p$-adic $L$-functions, such as for $\GL_2 \times \GL_2$ \cite{hidapadic,panchishkin82}, and the $\GL_2$ triple product \cite{harristilouine}. Classical Hida theory is sufficient for those cases since one works with products of the modular curve and it suffices to vary $p$-adically the degree zero cohomology group. Another case where higher Hida theory plays a role in constructing $p$-adic $L$-functions is $U(2, 1)$, which has been treated by Oh \cite{oh}.
 
  In a companion paper \cite{asairegulator} (in preparation), we shall relate the values of the $p$-adic Asai $L$-functions constructed here, at points \emph{outside} the range of interpolation, to the Euler system classes in Galois cohomology constructed in \cite{HMS}; this will be used in subsequent work to prove an explicit reciprocity law for these Euler system classes, and hence obtain new cases of the Bloch--Kato conjecture and Iwasawa main conjecture for Asai motives. \fix{See \cite{LZ-improved} for a study of exceptional-zero phenomena for our $p$-adic Asai $L$-functions, and \cite{kazi-loeffler} for a generalisation to finite-slope settings using higher Coleman theory.}
 
  The main result of \cite{loefflerwilliams18} is a counterpart of \cref{thmintro:cycloL} for automorphic representations of $\GL_2$ over \emph{imaginary} quadratic fields. However, the two constructions use very different techniques; indeed, the construction of the $p$-adic $L$-function in the imaginary case is more strongly related to the construction of the Euler system of \cite{HMS} in the real-quadratic case, rather than that of the $p$-adic $L$-function carried out here.

\section{Higher Hida theory for \texorpdfstring{$\p_1$}{p1}-ordinary cohomology}

 We adapt the constructions of \cite{myhida} to \fix{a ``partially ordinary'' setting, only imposing an ordinarity restriction at one of the primes above $p$ (and correspondingly only varying one component of the weight)}. Assume that the prime $p$ splits in $F$. Let us write $p=\p_1\cdot \p_2$, and let $\sigma_i$ be the complex embedding corresponding to $\p_i$ for our fixed isomorphism $\overline{\Q}_p \cong \C$.
 
 \begin{rmk}
  More generally, we could choose an arbitrary totally real field $F$ with $p$ completely split, and choose an arbitrary subset of primes above $p$; we stick to the case $[F : \Q] = 2$ for simplicity, and leave the generalisation to the interested reader.
 \end{rmk}

 \subsection{Preliminaries}

  \subsubsection{Hilbert modular surfaces} 
  
   For $n \ge 0$, we consider the level group $K_n \subset \GL_2(\AFf)$ defined by 
   \[ K_n = K^{(p)} \cdot K_{\p_1,n} \cdot \GL_2(\Oo_{F, \p_2}). \]
   where $K_{\p_1,n}=\{g\in \GL_2(\Oo_{F, \p_1}): g\equiv \stbt{*}{*}{0}{*} \mod p^n\}$ and $K^{(p)}$ is a \fix{neat} open compact subgroup of $G(\Af^{(p)})$. 
   We let \fix{$Y_0(\p_1^n)_{\Q}$ denote the canonical $\Q$-model of the Hilbert modular variety of level $K_n$, and $X_0(\p_1^n)_{\Q}$ a toroidal compactification of $Y_0(\p_1^n)_{\Q}$}. The toroidal compactification depends on a choice of polyhedral cone decomposition; we can, and do, choose this so that $\fix{X_0(\p_1^n)_{\Q}}$ is smooth and projective\fix{, and its boundary $D$ is a normal-crossing divisor. For $n = 0$ we write simply $Y_{\sph, \Q}$ and $X_{\sph,\Q}$.}

  \subsubsection{Moduli spaces}
  
   \begin{longfix}
    Since the Hilbert modular varieties are not of PEL type, we shall also consider an auxiliary moduli space, as in \cite[\S 2.1]{myhida} (see also \cite{tianxiao16}). For an ideal $\cc \trianglelefteqslant \Oo_F$ coprime to $p$ and the level of $K^{(p)}$, we let $\Mm_0^{\cc}(\p_1^n)_{\Q}$ denote the moduli space of tuples $(A,\iota, \lambda, \alpha_{K^{(p)}}, H)$, where $(A, \iota, \lambda, \alpha_{K^{(p)}})$ is an abelian surface with $\Oo_F$-action, $\cc$-polarization and level $K^{(p)}$-structure as in \emph{op.cit.}, and $H$ is a cyclic subgroup of $A[\p_1^n]$ of order $p^n$. We fix a set of ideals representing the narrow class group of $F$, and write $\Mm_0(\p_1^n)_{\Q} = \bigsqcup_{[\cc] \in \operatorname{Cl}^+_F} \Mm_0^{\cc}(\p_1^n)_{\Q}$. 
    
    Each of the varieties $\Mm_0^{\cc}(\p_1^n)_{\Q}$ has an action of the totally-positive units $\Oo_{F,+}^\times$ (acting by scaling the polarization $\lambda$); this factors through the finite group
    \[ 
     \Delta_{K^{(p)}} \coloneqq \Oo_{F, +}^\times / \left\{ u^2 : \stbt{u}{}{}{u} \in K^{(p)} \right\}, 
    \]
    and we have an isomorphism
    \begin{equation}
     \label{eq:modspacetoShvar}
     Y_0(\p_1^n)_{\Q} \cong \Mm_0(\p_1^n)_{\Q} / \Delta_{K^{(p)}} =  \bigsqcup_{[\cc]\in \operatorname{Cl}_{F}^{+}} \Mm_0^{\cc}(\p_1^n)_{\Q} / \Delta_{K^{(p)}}.
    \end{equation}
    We write $\Mm_0^{\cc}(\p_1^n)_{\Q}^{\tor}$, $\Mm_0(\p_1^n)_{\Q}^{\tor}$ for a choice of toroidal compactification, which we may choose so that the isomorphism \eqref{eq:modspacetoShvar} extends to an isomorphism
    \[ X_0(\p_1^n)_{\Q} \cong \Mm_0(\p_1^n)^{\tor}_{\Q} / \Delta_{K^{(p)}}.\]
    
    Again, for $n = 0$ we use the alternative notations $\Mm^{\cc}_{\sph, \Q}$, $\Mm_{\sph, \Q}$, and $\Mm_{\sph, \Q}^{\tor}$.
   \end{longfix}
%
%
  \subsubsection{Integral models}
  
   \begin{longfix}
   For $n =0, 1$, which are the main cases of interest for us, the varieties defined above have canonical $\Z_{(p)}$-models (see \cite{pappas}). As we are mostly interested in $p$-adic theory, we shall write 
   \[ \{Y_\sph, X_\sph, \Mm_\sph, \Mm^{\tor}_\sph,\quad Y_0(\p_1), X_0(\p_1), \Mm_0(\p_1), \Mm_0(\p_1)^{\tor}\}\]
   for the base-changes of these canonical models to $\Zp$. By construction $\Mm_\sph$ and $\Mm_0(\p_1)$ are moduli spaces for tuples $(A,\iota, \lambda, \alpha_{K^{(p)}})$, resp.~$(A,\iota, \lambda, \alpha_{K^{(p)}}, H)$, over $\Zp$-algebras.
    
   \begin{rmk}
    It should be possible to define integral models of the Shimura varieties $Y_0(\p_1^n)$, and their compactifications, for $n \ge 2$; but we are not aware of a reference where this is worked out in detail.
   \end{rmk}
   \end{longfix}

  \subsubsection{Automorphic sheaves}

   \fix{In this section}, let $\Mm$ denote either $\Mm_\sph$ or $\Mm_0(\p_1)$, and $\Mm^{\tor}$ its compactification. Let $\Aa \to \fix{\Mm^{\tor}}$ be the semi-abelian scheme extending the universal abelian surface with real multiplication by $\Oo_F$. \fix{As in \cite[\S2.1]{myhida} this is built using the universal abelian schemes $\Aa^{\cc}\to \Mm^{\cc}$ for each $\cc$.} Let $e: \fix{\Mm^{\tor}}\to \Aa$ be the unit section and
   \[
    \myom_{\Aa}\coloneqq \mathop{e^{\star}} \Omega^1_{\Aa/\fix{\Mm^{\tor}}} ;
   \]
   this is a $(\Oo_{\fix{\Mm^{\tor}}}\otimes_{\Z}\Oo_F)$-module locally free of rank 1. We can write
   \[
    \myom_{\Aa} = \omega_{\Aa,1}\oplus \omega_{\Aa,2},
   \]
   where $\omega_{\Aa,i}$ is the direct summand on which the $\Oo_F$-action is given by \fix{the embedding $\sigma_i$}. Let $\hh^1_{\Aa}$ be the canonical extension \fix{to $\Mm^{\tor}$} of the relative de Rham cohomology $\hh^1_{\dr}(\Aa/\Mm)$. It is a $(\Oo_{\fix{\Mm^{\tor}}}\otimes_{\Z}\Oo_F)$-module locally free of rank 2 and we can decompose it with respect to the $\Oo_F$-action as above.

   For $(\kk, w) \in \Z^{2}\times \Z$ such that $k_{i}\equiv w \mod 2$\fix{, let}
   \begin{equation}\label{eq:defiomegaGmodel}
    \myomk \coloneqq 
    \bigotimes_{1\le i\le 2}\left(\left(\wedge^{2} \hh_{\Aa,i}^{1}\right)^{\frac{w-k_{i}}{2}} \otimes {\omega}_{\Aa,i}^{k_{i}}\right)
   \end{equation}
   Using the descent datum of \cite[Definition 2.3.3]{myhida} (see also \cite[\S 5.5]{fakhruddinpilloni}) for the map \fix{$\Mm^{\tor}\to X$ (where $X$ is either $X_\sph$ or $X_0(\p_1)$)}, we can descend $\myomk$ to a sheaf over $X$. The cohomology of this sheaf has an action of the prime-to-$p$ Hecke algebra of level $K^{(p)}$, and the double coset of $\stbt{x}{0}{0}{x}$ for $x \in F^{\times +}$ acts as multiplication by $x \mapsto \nm_{F / \Q}(x)^w$.
   
   \begin{rmk}
   \label{rmk:changew}
   \begin{longfix}
   Note that the sheaf $\myom{(0, 0, 2)} = \det \hh^1_{\Aa}$ has a canonical trivialization given by Poincar\'e duality. So if $w' = w \bmod 2$, the spaces $\RG(X, \myomk)$ and $\RG(X, \myom^{(\kk, w')})$ are canonically isomorphic as vector spaces; but this isomorphism twists the Hecke action by a power of the ad\`ele norm character.
   \end{longfix}
   \end{rmk}

   \fix{If $E$ is any $F$-algebra, then we can similarly define $\myomk$ as a line bundle over $X_0(\p_1^n)_E$ for any $n$.}

  \subsubsection{Geometry of the special fibre for $n=0$} 
  \begin{longfix}
    
   Recall that $X_{\sph}$ is a smooth $\Zp$-scheme. Its special fibre $X_{\sph,\Fp}$ can be decomposed as 
   \[
   X_{\sph,\Fp} = X_{\sph,\Fp}^{1-\ord} \cup D_{\p_1},
   \]
   where $D_{\p_1}$ is the vanishing locus of the partial Hasse invariant at $\p_1$ (descended from $\mathcal{M}^{\tor}_{\sph,\Fp}$ to $X_{\sph,\Fp}$ as in \cite[\S3.2]{myhida}) and is a reduced divisor on $X_{\sph,\Fp}$ not intersecting the toroidal boundary. Its preimage in $\Mm_{\sph, \Fp}$ corresponds to abelian surfaces which are supersingular at $\p_1$. We denote by $\X_\sph$ the formal completion of $X_\sph$ along $X_{\sph,\Fp}$ and $\X_\sph^{1-\ord}$ its $\p_1$-ordinary locus.
   \end{longfix}

  \subsubsection{Geometry of the special fibre for $n=1$}
   
   \begin{longfix}
   Although $X_0(\p_1)_{\Fp}$ is not smooth, the $\p_1$-ordinary locus $X_0(\p_1)_{\Fp}^{1-\ord} \subset X_0(\p_1)_{\Fp}$ is smooth; it is the union of two components
   \[ X_0(\p_1)_{\Fp}^{1-\ord}= X_0(\p_1)_{\Fp}^{1-\m} \sqcup X_0(\p_1)_{\Fp}^{1-\et}, \]
   whose preimages in $\Mm_0(\p_1)_{\Fp}^{1-\ord}$ are the loci where the $\p_1$-level subgroup $H \subset A[\p_1]$ is multiplicative, resp.~\'etale. (See \cite[\S 4.1]{myhida}.)
   Denote by $\X_0(\p_1)$ the formal completion of $X_0(\p_1)$ along its special fibre; then we have a corresponding decomposition $\X_0(\p_1)^{1-\ord} = \X_0(\p_1)^{1-\m} \sqcup \X_0(\p_1)^{1-\et}$.
   
   Since a $\p_1$-ordinary abelian surface has a unique multiplicative $\p_1$-subgroup, restriction of the natural quotient map $X_0(\p_1) \to X_\sph$ is an isomorphism $\X_0(\p_1)^{1-\m} \to \X_{\sph}^{1-\ord}$. 
   \end{longfix}

 \subsection{Classicality at spherical level}
 \begin{longfix}
  
  In this subsection we work on the surface at spherical level $X_\sph$: we will recall the definition of the normalised $T_{\p_1}$-operator of \cite[\S3.1]{myhida} and use the mod $p$-classicality of \emph{op.cit.} to deduce a classicality result in fixed weight at spherical level. 

  \subsubsection{The $T_{\p_1}$-operator} 
  
   We shall define a cohomological correspondence on $\RG(X_\sph, \myomk)$ using the diagram (cf.~\cite[\S 3.1]{myhida})
   \[
    \begin{tikzcd}[column sep=1em]
     & X_0(\p_1) \arrow{dr}[right]{p_2} \arrow{dl}[left]{p_1}\\
     X_\sph && X_\sph.
    \end{tikzcd}
   \]
   Here $p_1$ is the natural quotient map, whereas $p_2$ is induced by right-translation by $\stbt{1}{}{}{\varpi_{\p_1}}$ on $\GL_2(\AFf)$, for $\varpi_{\p_1}$ a uniformizer at $\p_1$. 
   
   \begin{rmk}
    The corresponding maps of moduli spaces $\Mm_0(\p_1) \to \Mm_{\sph}$ are given by
    \begin{align*}
     p_1: (A,\iota, \lambda, \alpha_{K^{(p)}}, H) &\mapsto (A, \iota, \lambda, \alpha_{K^{(p)}}),\\
     p_2 : (A,\iota, \lambda, \alpha_{K^{(p)}}, H) &\mapsto (A/H, \iota', \lambda', \alpha'_{K^{(p)}}),
    \end{align*} 
    where $\iota'$, $\lambda'$, $\alpha'_{K^{(p)}}$ are the induced endomorphism action, polarisation and level structure (see the discussion before \cite[(2.3)]{myhida}).
   \end{rmk}
   
   As in \emph{loc.cit.}, we can define a rational map $\pr_2^{\star}\myomk \dashrightarrow \pr_1^{\star}\myomk$ of sheaves on $X_0(\p_1)$ (using pullback along the isogeny $\pr_1^{\star}\Aa_\sph\to \pr_2^{\star}\Aa_\sph$ over the moduli space). Tensoring $\pr_2^{\star}{\omega}^{(\kk,w)} \dashrightarrow \pr_1^{\star}\myomk$ with the fundamental class $\Theta:  \pr_1^\star\Oo_{X_\sph}\to \pr_1^!\Oo_{X_\sph}$, we get the \emph{na\"ive} correspondence $T^{\naive}_{\p_1, (\kk,w)}: \pr_2^{\star}\myomk \dashrightarrow \pr_1^{!}\myomk.$
   We normalize it letting
   \begin{displaymath}
   T_{\p_1, (\kk,w)}:=p^{-\inf\lbrace\tfrac{w-k_{1}}{2}+1,\tfrac{w+k_{1}}{2}\rbrace}T^{\naive}_{\p_1, (\kk,w)}=\begin{cases}
   p^{-\tfrac{w-k_{1}}{2}-1}T^{\naive}_{\p_1, (\kk,w)} &\text{if }k_{1}\geq 1 \\
   p^{-\tfrac{w+k_{1}}{2}}T^{\naive}_{\p_1, (\kk,w)} &\text{if }k_{1}< 1,
   \end{cases}
   \end{displaymath}
   To simplify the notation we will often denote simply by $T_{\p_1}$ the operator $T_{\p_1, (\kk,w)}$ for the automorphic sheaf $\myomk$.
   By \cite[Proposition 3.1.1]{myhida}, $T_{\p_1}$ extends to a map $\pr_2^{\star}\myomk \to \pr_1^{!}\myomk$ over the whole of $X_0(\p_1)$, and hence defines an endomorphism of $\RG\left(X_\sph, \myomk\right)$. Moreover this corresponence is optimally integral (does not vanish on the special fibre).
   \end{longfix}
   
   \begin{rmk}
    \label{rmk:BGGdecomp}
    The integrality of $T_{\p_1}$ is an instance of a much more general result of Fakhruddin--Pilloni, see \cite[\S 5.6.4]{fakhruddinpilloni}. To motivate these choices of normalising factors, note that if $\Pi$ is a cuspidal automorphic representation of weight $(k_1, k_2)$, with $k_1, k_2 \ge \fix{2}$, \fix{and unramified at $p$,} as in the introduction, then the representation $\Pi \otimes \|\cdot\|^{-w/2}$ appears in the cohomology of $X_{\overline{\Q}}$ with coefficients in each of the four sheaves $\myom^{(1 \pm (k_1 - 1), 1 \pm (k_2 - 1), w)}$ for small enough tame levels \fix{(this follows from the BGG decomposition for Hilbert modular varieties, see for example \cite[Example 2.6]{lansurvey}).} For any of these choices of signs, the eigenvalues of $T^{\naive}_{\p_1}$ on this eigenspace are $p^{(1 + w)/2} \alpha_1^{\circ}$ and $p^{(1 + w)/2} \beta_1^{\circ}$, while the eigenvalues of $T_{\p_1}$ are $p^{(k_1 - 1)/2} \alpha_1^{\circ} = \alpha_1$ and $p^{(k_1 - 1)/2} \beta_1^{\circ} = \beta_1$. In particular\fix{, if we identify cohomology groups with the same $\kk$ but different $w$ via the isomorphism of \cref{rmk:changew},} the action of $T_{\p_1}$ is independent of $w$ (while the prime-to-$p$ Hecke action and $T^{\naive}_{\p_1}$ are not), \fix{since we are dividing by a power of $p$ which is removing the dependence on $w$ of the eigenvalues.}
   \end{rmk}

  \subsubsection{Classicality for $T_{\p_1}$-ordinary cohomology}
  \begin{longfix}
  
   We denote by $e(T_{\p_1})$ the idempotent attached to the Hecke operator $T_{\p_1}$. 
   Applying \cite[Corollary 4.1.4]{myhida}, we obtain the following result:
   
   \begin{prop}
    If $k_1\geq 3$, then the natural restriction map induces a quasi-isomorphism:
    \[
     e(T_{\p_1})\RG\left(X_{\sph,\Fp},\myomk\right) \xrightarrow{\simeq} e(T_{\p_1})\RG\left(X_{\sph,\Fp}^{1-\ord},\myomk\right).
    \]
   \end{prop}
   
   We want to use the above result to deduce the characteristic zero analogue statement. 
   
   \begin{thm}\label{thm:sphericalclass}
    If $k_1\geq 3$, then the natural restriction map induces a quasi-isomorphism:
    \[
     e(T_{\p_1})\RG\left(X_\sph,\myomk\right) \xrightarrow{\simeq} e(T_{\p_1})\RG\left(\X_\sph^{1-\ord},\myomk\right).
    \]
   \end{thm}
   
   \begin{proof}
    The reduction mod $p$ of this map is a quasi-isomorphism by the previous proposition. Hence the result follows by Nakayama's lemma in the form of \cite[Proposition 2.2.2]{pilloni}.
   \end{proof}
   
   \subsubsection{Decomposition on the 1-ordinary locus} We now decompose the correspondence $T_{\p_1}$ over the $\p_1$-ordinary locus. Writing $\X_0(\p_1)^{1-\ord}$ as the union $\X_0(\p_1)^{1-\et} \sqcup \X_0(\p_1)^{1-\m}$ of its \'etale and multiplicative components, we can write
   \[ T^{\naive}_{\p_1} |_{\X_\sph^{1-\ord}} = T_{\p_1}^{\naive, \et} + T_{\p_1}^{\naive, \m}. \]
   
   \begin{prop}\label{def:Upoperator1}
    The correspondences $\mathfrak{U}_{\p_1} = p^{-(w-k_1)/2 - 1} T_{\p_1}^{\naive, \et}$ and $\mathfrak{V}_{\p_1} = p^{-(w + k_1)/2} T_{\p_1}^{\naive, \m}$ are defined integrally over $\X_\sph^{1-\ord}$ (for any $k_1$). Hence we have
    \[ T_{\p_1} |_{\X_\sph^{1-\ord}} =  
     \begin{cases}
      \mathfrak{U}_{\p_1} + p^{(k_1 - 1)} \mathfrak{V}_{\p_1} & \text{if $k_1 \ge 1$}, \\
      p^{(1-k_1)} \mathfrak{U}_{\p_1} + \mathfrak{V}_{\p_1} & \text{if $k_1 \le 1$}.
     \end{cases}
    \]
    In particular, over $\X_\sph^{1-\ord}$ we have $T_{\p_1} = \mathfrak{U}_{\p_1} \bmod p$ if $k_1 \ge 2$, and $T_{\p_1} = \mathfrak{V}_{\p_1} \bmod p$ if $k_1 \le 0$.
   \end{prop}
   
   \begin{proof} See \cite[Lemma 4.2.9]{myhida}. \end{proof}
   
   \begin{cor}
    For $k_1 \ge 2$, applying $e(\mathfrak{U}_{\p_1})$ gives a quasi-isomorphism
    \[ e(T_{\p_1})\RG\left(\X_\sph^{1-\ord},\myomk\right) \xrightarrow{\simeq}
       e(\mathfrak{U}_{\p_1})\RG\left(\X_\sph^{1-\ord},\myomk\right).\]
   \end{cor}
   \end{longfix}
   
 \subsection{Family variation and Igusa towers}
  
  We now show that the automorphic line bundles can be interpolated into families, with $k_1$ varying, over the \fix{$\p_1$-ordinary locus. We suppose here that $R$ is the ring of integers of a finite extension of $\Qp$.}
  
  \subsubsection{Families of sheaves}  
  \begin{longfix}
  
   \begin{thm}
    Let $(k_1, k_2, w) \in \Z^3$ be given with $k_1 = k_2 = w \bmod 2$; and let $\Lambda = R[[\Zp^\times]]$ with its canonical character $\mathbf{\kappa}: \Zp^\times \to \Lambda^\times$. Then there exists an invertible sheaf of $\Lambda \otimes \Oo_{\X^{1-\ord}_\sph}$-modules $\Omega^{(k_1 + 2\kappa, k_2, w)}$, whose specialisation at the character $x \mapsto x^n$ of $\Lambda$, for any $n \in \Z$, is $\myom^{(k_1 + 2n, k_2, w)}|_{\X^{1-\ord}_\sph}$.
   \end{thm}
   
   We first work over the moduli space $\M^{\tor, 1-\ord}_{\sph}$, the formal completion of the $\p_1$-ordinary locus of $\Mm_\sph^{\tor}$. Similarly as in \cite{myhida}, we can consider the $\Z_p^{\times}$-torsor
   \[
    \Ig(\p_1^{\infty}) \coloneqq \operatorname{Isom}(\mu_{p^\infty}, \fix{\Aa[\p_1^\infty]^\circ})\xrightarrow{\pi} \M^{\tor, 1-\ord}_\sph,
   \]
   where $\Aa[\p_1^\infty]^\circ$ denotes the connected part of the $\p_1$-torsion of the semiabelian variety $\Aa$ over $\M^{\tor, 1-\ord}_{\sph}$.
   We then define the sheaf
   \[
    \Omega_{1}^{\kappa}\coloneqq(\pi_{\star}\Oo_{\Ig(\p_1^{\infty})} \mathop{\hat\otimes} \Lambda)^{\Z_p^{\times}},\\
   \]
   where $\Z_p^{\times}$ acts on $\Lambda$ via the universal character $\kappa: \Z_p^{\times}\to \Lambda$, and on $\pi_{\star}\Oo_{\Ig(\p_1^{\infty})}$ via the action on $\Ig(\p_1^{\infty})$. As in \cite[Lemma 4.2.1]{myhida}, for any $k \in \Z$ the specialisation of this sheaf at the character $x \mapsto x^k$ of $\Lambda$ is given by
   \[
    \Omega_{1}^{\kappa}\otimes_{k}R \simeq \omega_{\Aa,1}^{k}.
   \]
   
   We also need to interpolate the powers of $\wedge^2 \hh^1_{\Aa,1}$. Similarly as in \cite[Definition 4.2.2]{myhida} (where instead the Igusa varieties at all primes above $p$ are considered), we can also introduce the $\Z_p^\times$-torsor $\Ig^{\vee}(\p_1^\infty)\coloneqq\text{Isom}(\mu_{p^\infty}, \Aa^\vee[\p_1^\infty]^{\circ})\xrightarrow{\pi^\vee} \M^{\tor, 1-\ord}_\sph$, where $\Aa^\vee$ denotes the dual of the universal abelian variety $\Aa$ (which again extends to a semiabelian scheme over toroidal compactification).
   
   Consider
   \[
    \left( \wedge^2 \hh^1_{\Aa, 1}\right)^{\kappa} \coloneqq 
    \left((\pi\times \pi^\vee)_{\star}\Oo_{\Ig(\p_1^\infty) \times \Ig^{\vee}(\p_1^\infty)} \mathop{\hat\otimes} \Lambda\right)^{\T},
   \]
   where $\T$ denotes the maximal torus of $\GL_2(\Z_p)$, acting via its natural action on each component of $(\pi\times \pi^\vee)_{\star}\Oo_{\Ig(\p_1^\infty)\times \Ig^{\vee}(\p_1^\infty)}$, and on $\Lambda$ via the composite of the determinant $\T \to \Zp^\times$ and the universal character. One can show (interpreting $\wedge^2 \hh^1_{\Aa,1}$ in terms of a torsor as in \cite[\S2.3.1]{myhida}) that the specialisation of the above sheaf at the character $x \mapsto x^k$ of $\Lambda$ is given by
   \[ 
    \fix{\left( \wedge^2 \hh^1_{\Aa, 1}\right)^{\kappa}\otimes_\Lambda R \simeq (\wedge^2 \hh^1_{\Aa,1})^k,}   
   \]
   justifying the notation. We now define
   \[
   \Omega^{(2\kappa, 0, 0)} \coloneqq
   \Omega_{1}^{2\kappa} \otimes \left( \wedge^2 \hh^1_{\Aa, 1}\right)^{-\kappa},
   \]
   which is a sheaf of $\Lambda \otimes \Oo_{\M^{\tor, 1-\ord}_\sph}$-modules; by construction we have
   \begin{equation}
    \label{eq:specialiso}
    \Omega^{(2\kappa, 0, 0)} \otimes_{\Lambda, n} R = \myom^{(2n, 0, 0)}
   \end{equation}
   as sheaves on $\M^{\tor, 1-\ord}_\sph$. Exactly as in \cite[\S 4.2.2]{myhida}, the unit group $\Delta_{K^{(p)}}$ acts trivially on $\Omega^{(2\kappa, 0, 0)}$ so it descends to a sheaf on $\X^{1-\ord}_{\sph}$.
   
   Finally we consider $k_1 \equiv k_2 \equiv w \mod 2$ and consider the sheaf 
   \(
    \Omega^{(k_1 + 2\kappa, k_2, w)} \coloneqq
    \Omega^{(2\kappa, 0, 0)} \otimes_{\Oo_{\X_\sph^{1-\ord}}} \myomk.
   \)
   This way the isomorphism \eqref{eq:specialiso} induces an isomorphism
   \begin{equation} 
    \label{eq:specialiso2}
    \Omega^{(k_1 + 2\kappa, k_2, w)}\otimes_{\Lambda, a} R \simeq (\myom^{(k_1+2a,k_2,w)})|_{\X_\sph^{1-\ord}}
   \end{equation}
   for all $a \in \Z$.
   
   \begin{rmk}
    Note that if $k_1' = k_1 + 2t$ for some integer $t$, then $\Omega^{(k_1' + 2\kappa, k_2, w)}$ is isomorphic to the base-change of $\Omega^{(k_1 + 2\kappa, k_2, w)}$ via the map $[x] \mapsto x^t [x]$ on $\Lambda$.
   \end{rmk}

  \subsubsection{Hecke action}

   Let us fix as above $k_1,k_2,w$ such that $k_1\equiv k_2\equiv w \mod 2$ and consider the sheaf of $\Lambda\otimes\Oo_{\X_\sph^{1-\ord}}$-modules $\Omega^{(k_1 + 2\kappa, k_2, w)}$.

   \begin{defi}
    We denote by $\mathfrak{U}_{\p_1}$ the Hecke operator acting on $\RG\left(\X^{1-\ord}_\sph, \Omega^{(k_1 + 2\kappa, k_2, w)}\right)$ defined as in \cite[Lemma 4.2.7]{myhida} (the operator denoted $U_{\p_1}$ in \emph{op.cit.}).
   \end{defi}

   As shown in Lemma 4.2.8 of \emph{op.cit.}, this is compatible under specialization at $\kappa = a$, for any $a \in \Z$, with the operator $\mathfrak{U}_{\p_1}$ defined above on $\myom^{(k_1 + 2a, k_2, w)}$. 
   
   \begin{rmk} 
    In \emph{op.cit.} this operator is defined, and its specialization property shown, for the full Igusa tower (trivializing both $\p_1$ and $\p_2$ torsion, and varying both components of the weight) over the fully ordinary locus $\X_\sph^{\ord}$; but same argument works over $\X_\sph^{1-\ord}$ for the partial Igusa tower at $\p_1$.
   \end{rmk}
   
   We can therefore consider the complex of $\Lambda$-modules
   \begin{align*}
    M^\bullet(k_1 + 2\kappa, k_2, w) &\coloneqq e(\mathfrak{U}_{\p_1})\RG\left(\X_\sph^{1-\ord},\Omega^{(k_1 + 2\kappa, k_2, w)}\right)
    .
   \end{align*}
   
   \begin{thm}\label{thm:classsphcomplex}
    The complex $M^\bullet(k_1 + 2\kappa, k_2, w)$ 
    is a perfect complex of $\Lambda$-modules concentrated in degrees $[0, 2]$, and for any $a\in\Z$, the isomorphism of sheaves \eqref{eq:specialiso2} yields a quasi-isomorphism
    \begin{align*}
     M^\bullet(\kk, w) \otimes^{\mathbb{L}}_{\Lambda,a} R 
     &\cong
     e(\mathfrak{U}_{\p_1}) \RG\left(\X_\sph^{1-\ord},\myom^{(k_1+2a,k_2,w)}\right)%
    \end{align*}
    compatible with the action of $\mathfrak{U}_{\p_1}$ and the Hecke operators away from $p$.
   \end{thm}
   
   \begin{proof}
    This follows exactly as in Theorem 4.2.13 of \cite{myhida}.
   \end{proof}
   
   \begin{rmk}\label{rmk:onedegree}
    Writing $D$ for the boundary divisor, one can show similarly that the complex
    \[ M^\bullet(k_1 + 2\kappa, k_2, w; -D)\coloneqq e(\mathfrak{U}_{\p_1})\RG\left(\X_\sph^{1-\ord},\Omega^{(k_1 + 2\kappa, k_2, w)}(-D)\right) \]
    is perfect, and interpolates the spaces $e(\mathfrak{U}_{\p_1}) \RG\left(\X_\sph^{1-\ord},\myom^{(k_1+2a,k_2,w)}(-D)\right)$. Moreover, as in Proposition 4.2.15 of \cite{myhida}, one can show that
    \begin{itemize}
     \item the natural map $M^\bullet(k_1 + 2\kappa, k_2, w; -D) \to M^\bullet(k_1 + 2\kappa, k_2, w)$ is a quasi-isomorphism after localisation at a non-Eisenstein maximal ideal of the Hecke algebra;
     \item the complex $M^\bullet(k_1 + 2\kappa, k_2, w, -D)$ is concentrated in degrees $[0, 1]$.
    \end{itemize}
    Moreover, if $k_2 \le -1$, then the vanishing results of \cite{diamondkassaei17} show that we have
    \[ H^0(X_{\sph, \Fp}, \myomk) = H^0(X_{\sph, \Fp}, \myomk(-D)) = 0 \]
    for any $k_1$. Applying this with $k_1$ replaced by $k_1 + 2a$, for $a \gg 0$, and using the classicity result of \cref{thm:sphericalclass}, it follows that $M^\bullet(k_1 + 2\kappa, k_2, w)$ is represented by a perfect complex concentrated in degrees $[1, 2]$, and $M^\bullet(k_1 + 2\kappa, k_2, w; -D)$ by a single projective $\Lambda$-module in degree 1 alone. Similarly, one can show that if $k_2 \ge 3$ then $M^\bullet(k_1 + 2\kappa, k_2, w; -D)$ is represented by a single projective $\Lambda$-module in degree 0.
   \end{rmk}
   \end{longfix}

 \subsection{Interpolation at Iwahori level} 

  \fix{Note that our classicality theorem \cref{thm:sphericalclass} uses the $T_{\p_1}$-ordinary cohomology, but it is the $\mathfrak{U}_{\p_1}$-ordinary part which interpolates in families; the operator $T_{\p_1}$ does not make sense on the Igusa tower. In this section we shall give an alternative formulation of the classicality theorem at level $X_0(\p_1)$, which is more convenient for applications.}

  \subsubsection{The $U_{\p_1}$-operator on the generic-fibre}
   \begin{longfix}
   For $n \ge 1$, let $K_{\p_1, n}'$ denote the subgroup $\{ \stbt a b c d : p \mid b, p^n \mid c\}$ of $\GL_2(\Oo_{F, \p_1})$. Let $C(\p_1^n)_\Q$ denote the compactified Shimura variety for $G$ of level $K_{\p_1, n}' K^{(\p_1)}$, so we have a diagram of degeneracy maps
   \[
    \begin{tikzcd}[column sep=1em]
     & C(\p_1^n)_\Q \arrow{dr}[above]{p'_2} \arrow{dl}[above]{p'_1}\\
     X_0(\p_1^n)_\Q && X_0(\p_1^n)_\Q.
    \end{tikzcd}
   \]
   where $p'_1$ is the natural projection, and $p'_2$ is given by the right action of $\stbt{\varpi_{\p_1}}{}{}{1}$ for $\varpi_{\p_1}$ a uniformizer at $\p_1$. We can and do choose our toroidal boundary data so that $C(\p_1^n)_{\Q}$ is smooth and projective, and the maps $p'_i$ are well-defined and finite. (The moduli space associated to $C(\p_1^n)$ parametrises choices of a second degree $p$ subgroup $H' \subset A[\p_1]$, disjoint from the $p^n$-subgroup $H$ parametrized by $\mathcal{M}_0(\p_1^n)$; and the maps $p'_1, p'_2$ correspond to forgetting, resp.~quotienting out by, $H'$.)
 
   Let $E$ be an arbitrary $F$-algebra (so that our automorphic sheaves are defined). Then, by general results of Harris (see \cite[\S 4.3.3]{harrisdelta}), pullback along the universal isogeny $p_1^{\prime*}(\Aa) \to p_2^{\prime*}(\Aa) = \Aa / H'$ descends to a map (indeed an isomorphism) of vector bundles on $C(\p_1^n)_E$,
   \[ g^* : p_2^{\prime*} \left(\myomk\right) \to p_1^{\prime*} \left(\myomk\right), \]
   The composite $(p'_1)_* \circ g^* \circ (p'_2)^*$ defines an endomorphism of $\RG\left(X_0(\p_1^n)_E, \myomk\right)$ which we denote by $U_{\p_1, (\kk, w)}^{\naive}$. We finally normalise it letting
   \[
    U_{\p_1, (\kk,w)}\coloneqq p^{(k_1 - w - 2)/2}U^{\naive}_{\p_1, (\kk,w)}.
   \]
   We drop the subscript $(\kk, w)$ if it is clear from context.

   \begin{rmk}
    Note we don't claim that $U_{\p_1}$ is defined integrally; this would not be a well-defined statement since we have not fixed an integral model of $C(\p_1^n)$.
   \end{rmk}

  \subsubsection{Comparison of $U_{\p_1}$ and $\mathfrak{U}_{\p_1}$}

   We now take $n = 1$, and consider the rigid-analytic generic fibre $\X_0(\p_1)_{\Qp}^{1-\m}$ of the formal scheme $\X_0(\p_1)^{1-\m}$, which is an open rigid-analytic subvariety of the analytification $\X_0(\p_1)_{\Qp} = \left(X_0(\p_1)_{\Qp}\right)^{\mathrm{an}}$. Since the formal scheme $\X_0(\p_1)^{1-\m}$ is of finite type over $\operatorname{Spf} \Zp$ we have 
   \[ 
    \RG\left(\X_0(\p_1)_{\Qp}^{1-\m}, \myomk\right) = 
    \RG\left(\X_0(\p_1)^{1-\m}, \myomk\right) \otimes_{\Zp} \Qp.
   \]
   We then have two operators on this space:
   \begin{itemize}
    \item The Hecke operator $\mathfrak{U}_{\p_1}$ of \cref{def:Upoperator1}, transported from $\X_\sph^{1-\ord}$ via the isomorphism $p_1 : \X_0(\p_1)^{1-\m} \to \X_\sph^{1-\ord}$.
    \item The Hecke operator $U_{\p_1}$ defined above, which restricts to a correspondence $\X_0(\p_1)_{\Qp}^{1-\m} \rightrightarrows \X_0(\p_1)_{\Qp}^{1-\m}$ (since the subgroup $H'$ parametrised by $C(\p_1)$ is disjoint from $H$, so the image of $H$ in $A / H'$ is multiplicative iff $H$ itself is). 
   \end{itemize}
   
   \begin{prop}
    These two Hecke operators coincide.
   \end{prop}
   
   \begin{proof}
    It suffices to prove the corresponding equality over the open moduli space. We constructed $U_{\p_1}$ using the moduli of $\p_1$-subgroups $H'$ complementary to the level subgroup $H$. Over $\X_0(\p_1)^{1-\m}$, the level subgroup $H$ is the unique multiplicative subgroup of $A[\p_1]$; so the subgroups $H'$ complementary to $H$ are precisely the \'etale subgroups of $A[\p_1]$, which are classified by the correspondence $\mathfrak{U}_{\p_1}$ above. 
   \end{proof}
   \end{longfix}

  \subsubsection{Comparison of classical cohomology at spherical and Iwahori level}
   \begin{longfix}
   
   We now show that the right hand side of the isomorphism of Theorem \ref{thm:classsphcomplex} is isomorphic to the $U_{\p_1}$-ordinary part of the cohomology of the surface at Iwahori level $X$, at least over $\Qp$. In particular this will allow us to show that the rational complex defined at spherical level interpolates cohomology at Iwahori level.
   
   \begin{thm}
    Suppose $k_1 \ge 2$, and let $\Pi_{\p_1}$ be an irreducible subquotient of the $\GL_2(\Qp)$-representation 
    \[ \colim_{\substack{U \subset \GL_2(K_{\p_1}) \\ \text{open compact} }} H^i\left(X(K^{(\p_1)} U)_{\overline{\Q}_p}, \myomk\right) \]
    such that $e_{U_{\p_1}} \cdot (\Pi_{\p_1})^{K_{\p_1, 1}} \ne 0$. Then $\Pi_{\p_1}$ is infinite-dimensional (equivalently, generic), and $e_{U_{\p_1}} \cdot (\Pi_{\p_1})^{K_{\p_1, 1}}$ has dimension 1. Moreover, exactly one of the following occurs:
    \begin{itemize}
     \item $\Pi_{\p_1}$ is an irreducible unramified principal series, and the integrally-normalized Hecke parameters satisfy $v_p(\alpha_1) = 0$, $v_p(\beta_1) = k_1 - 1$, with $\alpha_1$ being the eigenvalue of $U_{\p_1}$ on $e_{U_{\p_1}} \cdot (\Pi_{\p_1})^{K_{\p_1, 1}}$.
     \item $k_1 = 2$, and $\Pi_{\p_1}$ is an unramified twist of the Steinberg representation, the unique infinite-dimensional constituent of the reducible principal series with Hecke parameters $\alpha_1, \beta_1$ satisfying $v_p(\alpha_1) = 0$, $v_p(\beta_1) = 1$, and $\beta_1 = p \alpha_1$. Moreover, $\alpha_1$ is the eigenvalue of $U_{\p_1}$ on $e_{U_{\p_1}} \cdot (\Pi_{\p_1})^{K_{\p_1, 1}}$.
    \end{itemize}
   \end{thm}
   
   \begin{proof} 
    Since $\Pi_{\p_1}$ has Iwahori-invariants, it must be either an irreducible unramified principal series, a 1-dimensional unramified representation, or an unramified-character twist of the Steinberg. If it is an uramified principal series, then the normalized Hecke parameters satisfy $v_p(\alpha), v_p(\beta) \ge 0$ and $v_p(\alpha) + v_p(\beta) = k_1 - 1$, and the $U_{\p_1}$ eigenvalues are $\alpha$ and $\beta$, so the only possibility is as described.
   
    In the one-dimensional case, $\Pi_{\p_1} = \eta \circ \det$ for an unramified character $\eta$, and $\eta(p)^2 = p^{(k_1-2)/2}$ times a root of unity; but the $U_{\p_1}$-eigenvalue on the Iwahori invariants is then $p \eta(p)$, which has valuation $k_1 / 2 \ge 1$, so it cannot be a $p$-adic unit. In the Steinberg case, a similar computation gives that the $U_{\p_1}$ eigenvalue has valuation $k_1/2 - 1$, so this case can only occur when $k_1 = 2$ (this is just the well-known fact that $p$-new modular forms are ordinary in weight 2 and are non-ordinary in any higher weight).
   \end{proof}
   
   In particular, if $k_1 \ge 3$, then any $\Pi_{\p_1}$ as above must be an unramified irreducible principal series, and the natural map
   \[ 
    \Pi_{\p_1}^{\GL_2(\Zp)} \hookrightarrow \Pi_{\p_1}^{K_{\p_1, 1}} \stackrel{e(U_{\p_1})}{\relbar\joinrel\relbar\joinrel\twoheadrightarrow} e(U_{\p_1}) \Pi_{\p_1}^{K_{\p_1, 1}}
   \]
   is an isomorphism of one-dimensional $\overline{\Q}_p$-vector spaces. Hence we have the following corollary:
   
   \begin{cor}\label{cor:sphiwahoricohomology} 
    If $k_1 \ge 3$, then the composite of pullback to level $K_{1}$ and the operator $e(U_{\p_1})$ is an isomorphism
    \[ e(T_{\p_1}) H^i(X_{\sph,\Qp}, \myomk) \xrightarrow{\ \cong\ } e(U_{\p_1}) H^i(X_0(\p_1)_{\Qp}, \myomk). \]
   \end{cor}
   \end{longfix}

  \subsubsection{Final result} 
   
   \begin{longfix}
   We now consider the following diagram:
   \[
    \begin{tikzcd}[column sep=1em]
      e(T_{\p_1}) \RG(X_{\sph, E},\myomk) 
        \arrow{r}{} \arrow{d} 
        & e(T_{\p_1}) \RG(\X_{\sph, E}^{1-\ord}, \myomk) \dar\\
      e(U_{\p_1}) \RG(X_0(\p_1)_{E}, \myomk) 
        \arrow{r}{} 
        & e(U_{\p_1})\RG(\X_0(\p_1)_E^{1-\m},\myomk).
    \end{tikzcd}
   \]
   Here the maps are as follows:
   \begin{itemize}
    \item The horizontal arrows are restriction along the open embeddings $\X_{\sph,E}^{1-\ord} \subset \X_{\sph,E}$ and $\X_0(\p_1)_{E}^{1-\m} \subset \X_0(\p_1)_E$.
    \item The vertical arrows are the composite of the natural pullback $p_1^\star$ and the idempotent $e(U_{\p_1})$.
   \end{itemize}
   
   If $k_1 \ge 3$, then the top horizontal arrow is a quasi-isomorphism by \cref{thm:classsphcomplex}; the left vertical arrow is a quasi-isomorphism by \cref{cor:sphiwahoricohomology}; and the right vertical arrow is also a quasi-isomorphism, since $e(T_{\p_1})$ and $e(\mathfrak{U}_{\p_1})$ agree modulo $p$, so the map is the identity modulo $p$ by \cref{def:Upoperator1}, and hence a quasi-isomorphism by Nakayama's lemma. Thus the lower horizontal arrow is a quasi-isomorphism as well, and we deduce the following:
      
   \begin{cor}\label{thm:classicalityigusa}
    For any $a\in\Z$ such that $k_1+2a\geq 3$, the composite of restriction to $\X_0(\p_1)^{1-\m}_E$ and \cref{eq:specialiso2} defines a quasi-isomorphism
    \begin{align*}
     e(U_{\p_1}) \RG\left(X_0(\p_1)_{\Qp},\myom^{(k_1+2a,k_2,w)}\right) &\xrightarrow{\simeq} M^\bullet(\kk, w) \otimes^{\mathbb{L}}_{\Lambda,a} E.
     \tag*{\qedsymbol}
    \end{align*} 
   \end{cor}
   \end{longfix}

 \subsection{Hecke operators at deeper Iwahori level} 
  
  Note that in the previous sections we worked at Iwahori level. As the $\GL_2$ $p$-adic Eisenstein series we will work with will have deeper level at $p$, we need to understand the action of the $U_{\p_1}$-operator on the cohomology of the Shimura variety $X_0(\p_1^n)$. Since the geometry of integral models of $X_0(\p_1^n)$ for $n > 1$ is complicated, we shall only consider this variety after inverting $p$. 

  \subsubsection{Pullback}
  
   \fix{We denote by $\pi_n:X_0(\p_1^n) \to X_0(\p_1)$ the natural projection map.}
   \begin{prop}\label{prop:changelevelup}
    For general $n \ge 2$, the action of $(U_{\p_1})^{n-1}$ on the cohomology of $X_0(\p_1^n)_{\Qp}$ factors through the image via $(\pi_n)^\star$ of the cohomology of $X_0(\p_1)_{\Qp}$.
   \end{prop}
   
   \begin{proof}
    This is well-known. It suffices to check that $U_{\p_1}$ factors through $X_0(\p_1^{(n-1)})_{\Q}$, and this follows from the fact that the map $p_2' : C(\p_1^n)_{\Q} \to X_0(\p_1^n)_{\Q}$ actually factors through $C(\p_1^{(n-1)})_{\Q}$.
   \end{proof}
   
   \begin{longfix}
   It follows that the pullback map $\pi_n^\star$ induces a quasi-isomorphism
   \[ e(U_{\p_1}) R\Gamma(X_0(\p_1)_{\Qp}, \myomk) \to  e(U_{\p_1}) R\Gamma(X_0(\p_1^n)_{\Qp}, \myomk), \]
   and likewise for any finite extension $E / \Qp$.
   \end{longfix}

  \subsubsection{Duality}
   \begin{longfix}
   Since $\omega^{(2, 2, 0)}(-D)$ is the dualising sheaf, for any $(k_1, k_2, w)$, any $n \ge 1$ and any field extension $E/\Q$, we have a perfect Serre duality pairing
   \[ H^1\left(X_0(\p_1^n)_E, \myom^{(k_1, 2-k_2, w)}\right) \times 
      H^1\left(X_0(\p_1^n)_E, \myom^{(2-k_1, k_2, -w)}(-D)\right) \to E. \]
   (We write the weights in this form so that if $k_1, k_2 \ge 1$, then the Hecke eigenvalue systems associated to holomorphic modular forms of weight $(k_1, k_2)$ contribute to both sides, cf.~\cref{rmk:BGGdecomp} above.)
 
   \begin{defi}
    We define the Hecke operator $U_{\p_1}'$ on $R\Gamma\left(X_0(\p_1^n)_E, \myom^{(2-k_1, k_2, -w)}(-D)\right)$ by
    \[ U_{\p_1}' = p^{(k_1 + w - 2)/2} \cdot [\stbt{1}{0}{0}{\varpi_{\p_1}}] \]
    where $[g]$ denotes the double coset of $g$.
   \end{defi} 
 
   Note that the transpose under the Serre duality pairing of the operator $U_{\p_1}$ on $\RG\left(X_0(\p_1^n)_E, \myom^{(k_1, 2-k_2, w)}\right)$ (the operator denoted by $U_{\p_1}^t$ in \cite{myhida}) is $\langle \p_1 \rangle^{-1} U_{\p_1}'$, where $\langle \p_1 \rangle = p^{w} \cdot [ \stbt{\varpi_{\p_1}}{}{}{\varpi_{\p_1}}]$ is a finite-order endomorphism in the centre of the Hecke algebra. In particular, assuming (for the rest of this section) that $E$ is a finite extension of $\Qp$, Serre duality restricts to a perfect pairing
   \[ 
    e(U_{\p_1}) H^1\left(X_0(\p_1^n)_E, \myom^{(k_1, 2-k_2, w)}\right) \times e(U_{\p_1}') H^1\left(X_0(\p_1^n)_E, \myom^{(k_1, 2-k_2, w)}\right) \to E.
   \]
   \end{longfix}

  \subsubsection{Pairing with eigenclasses}
  
   We now consider a cuspidal automorphic representation $\Pi$ of $\GL_2(\A_F)$ of tame level $K^{(\p_1)}$ and \fix{unramified and} ordinary at $\p_1$, generated by a holomorphic cusp form of weight $(k_1, k_2)$, so that (for large enough $E$) $\Pif \otimes |\cdot|^{w/2}$ contributes to the degree one cohomology of the sheaf $\myom^{(2-k_1,k_2,\fix{-w})}(-D)$, i.e.
   \[
    H^1(X_0(\p_1)_E,\myom^{(2-k_1,k_2,\fix{-w})}(-D))[\Pif]\neq 0.
   \]

   Similarly as in \cite[Definition 5.8]{padicLfnct}, we have the following.

   \begin{defi}\label{defi:etan}
    Let $\eta=\eta_1\in H^1(X_0(\p_1)_E,\myom^{(2-k_1,k_2,\fix{-w})}(-D))[\Pif]$ and let $n\ge 1$. We let
    \[ 
     \eta_n\in H^1(X_0(\p_1^n)_E,\myom^{(2-k_1,k_2,\fix{-w})}(-D))[\Pif]
    \]
    be the unique element such that the pairing
    \[
     \fix{\langle-,\eta_n\rangle : H^1(X_0(\p_1^n)_E,\myom^{(k_1,2-k_2,w)})\to E}
    \]
    vanishes on the $U_{\p_1}=0$ eigenspace and it agrees with $\fix{p^{n-1}}\langle-,\eta_1\rangle$ on the image of $H^1(X_0(\p_1)_{\fix{E}},\myom^{(k_1,2-k_2,w)})$, i.e. the following diagram commutes:
    \begin{equation}
     \label{eq:diagrameta1etan}
     \begin{tikzcd}[column sep =27mm]
      H^1(X_0(\p_1^n)_E,\myom^{(k_1,2-k_2,w)})
      \arrow[rd, "{\fix{p^{1-n}} \langle -, \eta_n\rangle}", start anchor = east, bend left=15]\\
      H^1(X_0(\p_1)_E,\myom^{(k_1,2-k_2,w)})
      \arrow{u}{\pi_n^\star}
      \arrow[r, "{\langle-,\eta_1\rangle }" below] & E.
      \end{tikzcd}
    \end{equation}
   \end{defi}
   
   \fix{The factor $p^{n-1}$ is the degree of the map $\pi_n$.} It follows from the definition that the $\eta_n$'s are compatible under the trace map as follows:
   \[
   \eta_n=p^{-1}\cdot \sum_{g\in K_{\p_1, n} / K_{\p_1, n+1}}g\cdot \eta_{n+1}
   \]
   
   \begin{prop}
    If $\eta_1$ lies in the $U_{\p_1}'=\alpha$ eigenspace in the cohomology of $X_0(\p_1)_E$, then for any $n \ge 1$, $\eta_n$ is an eigenvector for $U_{\p_1}'$ in the cohomology of $X_0(\p_1^n)_E$, with the same eigenvalue.
   \end{prop}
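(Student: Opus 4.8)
The plan is to recognise $U^t_{\p_1}\eta_n$ as the class attached by Definition~\ref{defi:etan} to the base element $\alpha\eta_1$ in place of $\eta_1$, and then invoke the uniqueness clause of that definition. Since $\Pi$ is ordinary at $\p_1$, the eigenvalue $\alpha$ is a $p$-adic unit, in particular nonzero, so $\alpha\eta_1$ is again a valid base element in $H^1(X_{0}(\p_1)_E,\myom^{(2-k_1,k_2,w)}(-D))[\Pif]$; the class $\alpha\eta_n$ obviously satisfies the two properties characterising ``the $\eta_n$ for $\alpha\eta_1$'', and the substance of the proof is that $U^t_{\p_1}\eta_n$ satisfies them too.

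Two inputs are needed. First, the adjunction at level $\Gamma_0(\p_1^n)$: under the Serre-duality pairing $\langle-,-\rangle$ between $H^1$ of $\myom^{(k_1,2-k_2,-w)}$ and of $\myom^{(2-k_1,k_2,w)}(-D)$ one has $\langle U_{\p_1}x,z\rangle=\langle x,U^t_{\p_1}z\rangle$. This is the level-$\p_1^n$ analogue of the duality lemma $D(U_{\p_1,(\kk,w)})=U^t_{\p_1,(2-\kk,-w)}$; on the generic fibre it holds because $U_{\p_1}$ and $U^t_{\p_1}$ are, by definition, the mutually transposed double-coset operators attached to $\stbt{p}{0}{0}{1}$ and $\stbt{1}{0}{0}{p}$, and these agree with the geometric operators by the Proposition above. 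Second, the intertwining $U_{\p_1}\circ\pi_n^\star=\pi_n^\star\circ U_{\p_1}$ between the operators at levels $\Gamma_0(\p_1)$ and $\Gamma_0(\p_1^n)$: the forgetful map $\pi_n$ pulls the weight-$(\kk,w)$ automorphic bundle back to the weight-$(\kk,w)$ bundle, one may use common coset representatives for the $U_{\p_1}$-double coset at the two levels, and the normalising power of $p$ relating $U_{\p_1}$ to $U^{\mathrm{naive}}_{\p_1}$ depends only on $(k_1,w)$, not on $n$. Both facts are essentially contained in the Proposition preceding the statement.

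Granting these, the verification is short. For $x$ in the $U_{\p_1}=0$ eigenspace of $H^1(X_{0}(\p_1^n)_E,\myom^{(k_1,2-k_2,-w)})[\Pif^{\vee}]$ we have $\langle x,U^t_{\p_1}\eta_n\rangle=\langle U_{\p_1}x,\eta_n\rangle=0$, which is the first property. For $y\in H^1(X_{0}(\p_1)_E,\myom^{(k_1,2-k_2,-w)})[\Pif^{\vee}]$ we compute
\begin{align*}
\langle \pi_n^\star y,\,U^t_{\p_1}\eta_n\rangle
 &= \langle U_{\p_1}\pi_n^\star y,\,\eta_n\rangle
  = \langle \pi_n^\star(U_{\p_1}y),\,\eta_n\rangle
  = \langle U_{\p_1}y,\,\eta_1\rangle \\
 &= \langle y,\,U^t_{\p_1}\eta_1\rangle
  = \alpha\,\langle y,\,\eta_1\rangle
  = \langle y,\,\alpha\eta_1\rangle,
\end{align*}
using in turn the adjunction at level $\p_1^n$, the intertwining, the defining property of $\eta_n$ applied to $U_{\p_1}y$ (which still lies in the $[\Pif^{\vee}]$-part), the adjunction at level $\p_1$, and $U^t_{\p_1}\eta_1=\alpha\eta_1$; this is the second property. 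Hence $U^t_{\p_1}\eta_n$ and $\alpha\eta_n$ both satisfy the two properties of Definition~\ref{defi:etan} for the base element $\alpha\eta_1$, and therefore $U^t_{\p_1}\eta_n=\alpha\eta_n$.

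I expect the genuine work to lie entirely in the intertwining input --- checking that $\pi_n^\star$ transports $U_{\p_1}$ (and $U^t_{\p_1}$) at Iwahori level to the corresponding operators at level $\Gamma_0(\p_1^n)$, with the integral normalisations matching. This is bookkeeping rather than a conceptual obstacle, and is largely subsumed in the preceding Proposition, but it is where care with the choices of coset representatives and normalising $p$-powers is required; once the two-sided compatibility of $U_{\p_1}$ with the $U^t_{\p_1}$-pairing is in hand, the remainder is formal.
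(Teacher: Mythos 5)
Your argument is correct and is precisely the verification the paper leaves implicit (the proposition is stated with a \qed and no written proof): one checks that $U_{\p_1}^t\eta_n$ satisfies the two conditions of Definition~\ref{defi:etan} characterising the class attached to $\alpha\eta_1$, using the adjunction $\langle U_{\p_1}x,z\rangle=\langle x,U_{\p_1}^t z\rangle$ and the compatibility $U_{\p_1}\circ\pi_n^\star=\pi_n^\star\circ U_{\p_1}$ (which holds because the forgetful degeneracy map admits common coset representatives for the $U_{\p_1}$ double coset at levels $\Gamma_0(\p_1)$ and $\Gamma_0(\p_1^n)$, with the normalisation independent of $n$). Your identification of the intertwining with $\pi_n^\star$ as the only point requiring genuine checking is also accurate.
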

   
   \begin{proof}
    \fix{This follows as in \cite[Proposition 5.9]{padicLfnct}.}
   \end{proof}
   
   Via the ``canonical subgroup'' map, we can identify the rigid-analytic generic fibre $\X_0(\p_1)^{1-\m}_E$ with an open subspace of the analytification of $X_0(\p_1^n)_{E}$, for any $n \ge 1$. This gives a map
   \[
    H^1(X_0(\p_1^n)_E,\myom^{(k_1,2-k_2,w)}) \to H^1(\X_0(\p_1)^{1-\m}_E,\myom^{(k_1,2-k_2,w)}).
   \]

   \begin{prop}
    If $\eta_1$ lies in the $U_{\p_1}'=\alpha$ eigenspace, and $\alpha$ is a $p$-adic unit, then the map
    \begin{align*}
     H^1(\X_0(\p_1)^{1-\m}_E, \myom^{(k_1,2-k_2,w)})
     \xrightarrow{e(U_{\p_1})}&\, e(U_{\p_1})
     H^1(\X_0(\p_1)^{1-\m}_E, \myom^{(k_1,2-k_2,w)}) \\
     \cong\ & e(U_{\p_1}) H^1(X_0(\p_1)_E,\myom^{(k_1,2-k_2,w)})\\
     \xrightarrow{\langle -, \eta_1\rangle}&\, E
    \end{align*}
    coincides with $\fix{p^{1-n}}\langle -, \eta_n\rangle$ on the image of $H^1(X_0(\p_1^n)_E,\myom^{(k_1,2-k_2,w)})$, for any $n \ge 1$.
   \end{prop}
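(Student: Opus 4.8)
The plan is to treat both objects as $E$-linear functionals on $H^1\bigl(X_0(\p_1^n)_E,\myom^{(k_1,2-k_2,-w)}\bigr)$, to show that each factors through the ordinary projector $e(U_{\p_1})$, and then to observe that on the ordinary part both are computed by the single pairing $\langle-,\eta_1\rangle$ pulled back from level $\p_1$. Write $\rho_m$ for the restriction map $H^1\bigl(X_0(\p_1^m)_E,\myom^{(k_1,2-k_2,-w)}\bigr)\to H^1\bigl(\X_0(\p_1)^{1-\m}_E,\myom^{(k_1,2-k_2,-w)}\bigr)$ attached to the canonical-subgroup open immersion $\X_0(\p_1)^{1-\m}_E\hookrightarrow X_0(\p_1^m)^{\mathrm{an}}_E$, and note that the isomorphism of \cref{corprojector} is exactly this restriction $\phi\coloneqq\rho_1$ on $e(U_{\p_1})$-parts. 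Let $F_1\coloneqq\langle-,\eta_1\rangle\circ\phi^{-1}\circ e(U_{\p_1})\circ\rho_n$ be the functional of the statement precomposed with $\rho_n$, and $F_2\coloneqq\langle-,\eta_n\rangle$. Since $\eta_1$ and $\eta_n$ lie in $\Pif$-isotypic subspaces and every map involved is equivariant for the prime-to-$p$ Hecke algebra, $F_1$ and $F_2$ both factor through the projection onto the $\Pif^\vee$-isotypic part; we may therefore work inside that part, in which $U_{\p_1}$ has eigenvalues among the $\p_1$-stabilised Hecke eigenvalues $\{\alpha_1,\beta_1,0\}$, so that $e(U_{\p_1})$ is defined and $U_{\p_1}$ is topologically nilpotent on the complementary (non-ordinary) part.

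First I would check that $F_1$ and $F_2$ factor through $e(U_{\p_1})$. For $F_1$: the open $\X_0(\p_1)^{1-\m}_E$ is $U_{\p_1}$-stable and $\rho_n$ is $U_{\p_1}$-equivariant, since the canonical-subgroup identification intertwines the double-coset operator $U_{\p_1}$ at Iwahori depth $n$ with the cohomological correspondence $U_{\p_1}$ of \cref{propTpnorm} and \cref{lemmaUpop}; hence $\rho_n\circ e(U_{\p_1})=e(U_{\p_1})\circ\rho_n$, and as $F_1$ begins by applying $e(U_{\p_1})$ on the target we get $F_1=F_1\circ e(U_{\p_1})$. For $F_2$: by the Proposition following \cref{defi:etan}, $\eta_n$ is an eigenvector for $U_{\p_1}^t$ with eigenvalue $\alpha$, which is a $p$-adic unit by hypothesis; since $U_{\p_1}$ and $U_{\p_1}^t$ are adjoint for the Serre duality pairing on $X_0(\p_1^n)_E$ (the relation $D(U_{\p_1})=U^t_{\p_1}$ of the Duality subsection), for $v$ in the non-ordinary part one has $\langle v,\eta_n\rangle=\alpha^{-N}\langle U_{\p_1}^N v,\eta_n\rangle$ for every $N$, and since $\lvert\alpha^{-N}\rvert=1$ while $U_{\p_1}^N v\to0$, letting $N\to\infty$ gives $\langle v,\eta_n\rangle=0$; hence $F_2=F_2\circ e(U_{\p_1})$. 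The identical computation with $\eta_1$ in place of $\eta_n$ shows $\langle e(U_{\p_1})w,\eta_1\rangle=\langle w,\eta_1\rangle$ for every class $w$ at level $\p_1$, which I use below.

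Next I would identify $e(U_{\p_1})H^1\bigl(X_0(\p_1^n)_E,\myom^{(k_1,2-k_2,-w)}\bigr)$ with classes pulled back from level $\p_1$: by the Proposition asserting that $(U_{\p_1})^{n-1}$ on the cohomology of $X_0(\p_1^n)_{\Q_p}$ factors through the image of $\pi_n^\star$, and using that $U_{\p_1}$ is bijective on the ordinary part, we get $e(U_{\p_1})H^1=(U_{\p_1})^{n-1}\bigl(e(U_{\p_1})H^1\bigr)\subseteq\operatorname{im}(\pi_n^\star)$. Thus any $v\in e(U_{\p_1})H^1$ can be written $v=\pi_n^\star(w)$, and after projecting $w$ onto its $\Pif^\vee$-component we may assume $w\in H^1\bigl(X_0(\p_1)_E,\myom^{(k_1,2-k_2,-w)}\bigr)[\Pif^\vee]$. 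Since $\pi_n$ restricted to the canonical-subgroup locus is the identity under the two identifications, $\rho_n\circ\pi_n^\star=\rho_1$; combining this with the $U_{\p_1}$-equivariance of $\rho_1$ and the fact that $\phi^{-1}$ inverts the restriction $\rho_1$ on $e(U_{\p_1})$-parts, we get $F_1(v)=\langle\phi^{-1}(e(U_{\p_1})\rho_1(w)),\eta_1\rangle=\langle e(U_{\p_1})w,\eta_1\rangle=\langle w,\eta_1\rangle$. On the other hand, the defining diagram \eqref{eq:diagrameta1etan} of $\eta_n$ gives $F_2(v)=\langle\pi_n^\star w,\eta_n\rangle=\langle w,\eta_1\rangle$. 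Hence $F_1$ and $F_2$ agree on $e(U_{\p_1})H^1$, and therefore everywhere by the first step; this is the assertion.

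The step I expect to be the main obstacle is the $U_{\p_1}$-equivariance of $\rho_n$ together with the identity $\rho_n\circ\pi_n^\star=\rho_1$: both amount to verifying that the canonical-subgroup open immersion genuinely intertwines the several incarnations of $U_{\p_1}$ that appear — the cohomological correspondence on $\X_0(\p_1)^{1-\m}$ built in \cref{propTpnorm}, \cref{lemmaUpop} and \cref{defi:twistedmodelatpi}, the double-coset operator at Iwahori depth $n$, and the one at depth $1$ — and that it is compatible with the degeneracy maps $\pi_n$. These come down to bookkeeping about how the canonical subgroup sits inside the universal $\p_1^n$-level structure, but matching up the different definitions of $U_{\p_1}$ requires care; once they are in place, the remaining argument is purely formal.
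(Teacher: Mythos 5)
Your argument is correct and is built around exactly the fact the paper's own (one-line) proof cites, namely that $(U_{\p_1})^{n-1}$ on the cohomology of $X_0(\p_1^n)_{\Q_p}$ factors through $\pi_n^\star$, combined with the defining properties of $\eta_n$ and the vanishing of $\langle-,\eta_n\rangle$ on the non-ordinary part. You have simply written out in full the bookkeeping (equivariance of the canonical-subgroup restriction maps, $\rho_n\circ\pi_n^\star=\rho_1$, and invertibility of $U_{\p_1}$ on the ordinary part) that the paper leaves implicit.
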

  
   \begin{proof}
    This follows from the fact that $U_{\p_1}^{n-1}$ factors through $\pi_n^\star$.
   \end{proof}


 \subsection{Families of eigenclasses}
  \label{sect:eigenfamilies}
  As above we let $E$ be a finite extension of $\Qp$; and we now suppose that the tame level has the form $K_1(\mathfrak{N})^{(p)} = \{g \in \GL_2(\widehat{\Oo}_F^{(p)}) :  g = \stbt{*}{*}{0}{1} \bmod \mathfrak{N}\}$ for some ideal $\mathfrak{N}$ of $\Oo_F$.
 
  \begin{notation}
   Let $\T$ denote the product of the prime-to-$\mathfrak{N}\p_1$ Hecke algebra $\T^{(\mathfrak{N}\p_1)}$ (with $E$-coefficients) and the polynomial ring in a formal variable $U_{\p_1}$ and its inverse.
  \end{notation}
 
  \begin{defi}
   Let $\Uu \subset \Ww =(\operatorname{Spf} \Lambda)^{\mathrm{rig}}_E$ be an open affinoid containing 0; and let $\kappa_{\Uu} : \Zp^\times \to \Oo(\Uu)$ be the universal weight. A \emph{$\p_1$-adic family of eigensystems} over $\Uu$ of weight $(k_1 + 2\kappa_\Uu, k_2, w)$ and level $\mathfrak{N}$ is a homomorphism $\T \to \Oo(\Uu)$ with the following property:
   \begin{itemize}
    \item For every $a \in \Uu \cap \Z$ with $k_1 + 2a \ge 2$, the composite homomorphism $\T \to \Oo(\Uu) \to E$ given by evaluation at $a$ is the Hecke eigenvalue system associated to a $\p_1$-stabilisation of $\Pi[a]$ (or more precisely $\Pi[a] \otimes \|\cdot\|^{-w/2}$), for some cuspidal automorphic representation $\Pi[a]$ of $\GL_2 / F$ of level $\mathfrak{N}$ and weight $(k_1 + 2a, k_2)$ and some embedding of its coefficient field into $E$.
   \end{itemize}
   We denote such a family by $\underline{\Pi}$. We say that $\underline{\Pi}$ is \emph{$\p_1$-ordinary} if the Hecke eigenvalues of $U_{\p_1}$ and its inverse acting on $\underline{\Pi}$ are power-bounded.
  \end{defi}
  
  \begin{longfix}  
  \begin{thm}   
  \label{thm:families}
   Let $\Pi$ be a cuspidal automorphic representation of conductor $\mathfrak{N}$ and weight $(k_1, k_2)$, with $k_1 = k_2 = w \bmod 2$, ordinary at $\p_1$, whose coefficient field is contained in $E$. Assume $k_1 \ge 3$ and $k_2 \ge 2$. Then:
   \begin{enumerate}[(i)]
    \item There exists an affinoid disc $\Uu \ni 0$ in $\Ww$, and a family $\underline{\Pi}$ over $\Uu$ passing through $\Pi$ (i.e.~with $\Pi[0] = \Pi$). Moreover, this family is uniquely determined.
    \item If we write
    \[ M^\bullet(k_1 + 2\kappa_{\Uu}, 2-k_2, w) \coloneqq M^\bullet(k_1 + 2\kappa, 2-k_2, w)  \otimes_{\Lambda} \Oo(\Uu), \]
    then, after possibly shrinking $\Uu$, the $H^1$ of this complex contains a free rank 1 $\Oo(\Uu)$-direct summand
    \[ H^1(k_1 + 2\kappa_{\Uu}, 2-k_2, w)[\underline{\Pi}] \ \subseteq\  H^1\left(M^\bullet(k_1 + 2\kappa_{\Uu}, 2-k_2, w)\right)\]
    whose specialisation at every $a \in \Uu \cap \Z$ with $k_1 + 2a \ge 3$ is the eigenspace in $e(U_{\p_1}) H^1(X_0(\p_1), \myom^{(k_1 + 2a, 2-k_2, w)})$ on which $\T$ acts via the ordinary $\p_1$-stabilisation of $\Pi[a]$.
   \end{enumerate}
  \end{thm}
  
  \begin{proof}
   Let $\Uu$ be an arbitrary (for now) affinoid disc around 0. We write $H^i_{\Uu}$ for the cohomology of $M^\bullet(k_1 + 2\kappa_{\Uu}, 2-k_2, w)$, and $\T_{\Uu}$ for the image of $\Oo(\Uu) \otimes \T$ in the $\Oo(\Uu)$-endomorphism algebra of the graded module $\bigoplus_i H^i_{\Uu}$; this is a finite $\Oo(\Uu)$-algebra, hence affinoid. We let $\mathcal{C}_{\Uu}$ be the rigid-analytic space $\operatorname{Spa}\left(\T_{\Uu}\right)$, which is finite over $\Uu$. Clearly the modules $H^i_{\Uu}$ are coherent sheaves on $\mathcal{C}_{\Uu}$. As in Theorem 4.3.3 of \cite{hansen}, and for each $x \in \Uu(\overline{\Q}_p)$, the fibre of $\mathcal{C}_{\Uu}$ over $x$ bijects with the set of generalized eigenspaces for $\T$ appearing in the finite-dimensional vector space $H^i_x = H^i\left(M^\bullet(k_1 + 2\kappa_x, 2-k_2, w)\right)$ for some $i$ . 
   
   From the base-change compatibility of $M^\bullet$, we obtain a second-quadrant Tor spectral sequence for each $x$, which (since $\Oo(\Uu)$ is a Dedekind domain) can be written in the form of exact sequences
   \[ 0 \to H^i_{\Uu} / \mathfrak{m}_x \to H^i_x \to H^{i+1}_{\Uu}[\mathfrak{m}_x] \to 0\]
   for each $i$. If we take $x$ to be the trivial character, and localize at the maximal ideal $\mathfrak{m}_y$, where $y$ is the point of $\mathcal{C}_\Uu$ above $x$ corresponding to the Hecke eigensystem of $\Pi$, then the localization $(H^i_x)_{\mathfrak{m}_y}$ is non-zero only for $i = 1$ (since the control theorem identifies $H^i_x$ with the $U_{\p_1}$-ordinary part of $H^i(X_0(\p_1)_{\overline{\Q}_p}, \myom^{(k_1, 2-k_2, w)})$, and the $\Pi$-generalized eigenspace in this is zero for $i \ne 1$). By Nakayama's lemma we deduce that the localization $(H^i_{\Uu})_{\mathfrak{m}_y}$ vanishes for $i \ne 1$, and for $i = 1$, it is torsion-free and hence free over $\Oo_{\Uu, x}$.
   
   We now apply the argument (due originally to Chenevier) described in \cite[Proposition 6.6.4]{bergdall-hansen}, with $H^1_{\Uu}$ in place of the module $\mathscr{M}^\epsilon$ of \emph{op.cit.}. Using the fact that the $\Pi$-eigenspace in $H^1_x$ has dimension 1, this argument shows that, after possibly shrinking $\Uu$, the connected component of $\mathcal{C}_\Uu$ containing $y$ maps isomorphically to its image in $\Uu$, and the restriction of the coherent sheaf $H^1_{\Uu}$ to this component is free of rank 1, from which the assertions of the theorem clearly follow.
  \end{proof}
  \end{longfix}
 
  \begin{rmk}
   In more geometric language, for each $k_2 \ge 2$, there is a ``partial eigenvariety'' parametrising $\p_1$-adic families of weight $(k_1 + 2\kappa, k_2, w)$; and the map from the partial eigenvariety to $\Ww$ is \'etale at the point corresponding to $\Pi$, so we may write down a section over a small enough neighbourhood of its image. (See forthcoming work of C.-Y. Hsu for a more systematic investigation of partial eigenvarieties for Hilbert modular forms.) Note that this step would fail if $k_2 = 1$, since in this case $\Pi$ contributes to both $H^0$ and $H^1$ of $\myom^{(k_1, 1, w)}$; we do not expect partial eigenvarieties for partial weight 1 forms to be \'etale (or flat) over the weight space.
  \end{rmk}

\section{Pushforwards}
 
 We now consider relations between the higher Hida theory spaces for the group $G = \res_{F/\Q} \GL_2$ developed in the previous section, and spaces of $p$-adic automorphic forms for its subgroup $H = \GL_2$. To avoid confusion between the two groups, we will use subscripts $(-)_G$ or $(-)_H$ as appropriate, so $X_{G, \sph}$ denotes the Hilbert modular variety which we previously denoted simply by $X_\sph$. \fix{We fix a choice of neat open compact subgroup $K^{(p)}_G \subset \GL_2(\AFf^{(p)})$ as above, and let $K^{(p)}_H = K^{(p)}_G \cap \GL_2(\Af^{(p)})$; these will be the prime-to-$p$ parts of the levels of our Shimura varieties.}

 \subsection{Classical pushforward map}

  We have the canonical inclusion $H\subset G$, inducing a finite map of $\Zp$-schemes
  \[ \fix{\iota: X_{H, \sph} \to X_{G, \sph}}, \]
  where the Shimura varieties are taken to be of \fix{spherical} level at $p$. \fix{This naturally factors through a map 
  \[ \iota' : X_{H, \sph} \to M_{G, \sph}^{\tor}, \]
  given on the open Shimura variety $Y_{H, \sph}$ by mapping an elliptic curve $E$ to the abelian surface $E \otimes_{\Z} \Oo_F$, with its obvious $\Oo_F$-action and $\cc$-polarization (for $\cc$ representing the class of the inverse different ideal). We thus have an isomorphism of abelian schemes $(\iota')^{\star}(\Aa) = \Ee \otimes_{\Z} \Oo_F$ over $Y_{H, \sph}$, where $\Ee$ is the universal elliptic curve over $Y_{H, \sph}$; this extends to an isomorphism of semiabelian varieties over $X_{H, \sph}$ (essentially by construction, cf.~\cite[\S 4.5]{padicLfnct} in the $\operatorname{GSp}_4$ case; this can also be deduced from the universal property of the toroidal compactifications, cf.~\cite[Chapter IV, Theorem 5.7 (5)]{faltingschai}).}

  Moreover, \fix{from the isomorphism $(\iota')^{\star}(\Aa) = \Ee \otimes_{\Z} \Oo_F$, where $\Ee$ is the semiabelian variety over $X_{H, \sph}$ extending the universal elliptic curve over the open modular curve (cf.~\S 3.3 of \cite{HMS})}, we have isomorphisms for any $n_1, n_2, a, b \in \Z$
  \begin{equation}\label{eq:pullback}
   \omega_{\Ee}^{n_1+n_2}\otimes \left(\wedge^{2} \hh_{\Ee}^{1}\right)^{a+b} \to
   (\iota')^{\star}\left(\omega_{\Aa,1}^{n_1}\otimes \omega_{\Aa,2}^{n_2} \otimes 
   \left(\wedge^{2} \hh_{\Aa,1}^{1}\right)^{a}\otimes \left(\wedge^{2} \hh_{\Aa,2}^{1}\right)^{b}\right),
  \end{equation}
  where $\omega_{\Ee}$ denotes the pullback along the unit section of the sheaf of differentials of $\Ee / X_{H, \sph}$ and one can define similarly the sheaf $\wedge^{2} \hh_{\Ee}^{1}$.
  
  \begin{defi}
   For any $n \in 2\Z$, we let
   \[
    \myom_H^{n}\coloneqq \omega_{\Ee}^{n}\otimes \left(\wedge^{2} \hh_{\Ee}^{1}\right)^{-\tfrac{n}{2}},
   \]
   so the centre of $H(\Af)$ acts on the \fix{coherent} cohomology of this sheaf via finite-order characters.
  \end{defi}
  
  \begin{rmk}
   \label{rmk:twistgl2}
   Note that we have a canonical trivialisation of $\wedge^{2} \hh_{\Ee}^{1}$ (since an elliptic curve has a unique principal polarisation). However, this trivialisation induces a twist of the Hecke action: we have an isomorphism of $H(\fix{\Af})$-modules
   \[
    H^0(X_{H, \fix{\sph}}, \omega_{\Ee}^{n}\otimes \left(\wedge^{2} \hh_{\Ee}^{1}\right)^{m})
    \simeq H^0(X_{H, \fix{\sph}}, \omega_{\Ee}^{n})\otimes |\det(-)|^{-m}
   \]
   where $|\cdot|$ denotes the adele norm character (mapping a uniformizer at $p$ to $1/p$). Since we shall only consider modular forms for $H$ of even weight in this paper, we can use this freedom to make the central character unitary.
  \end{rmk}

  Let $(k_1, k_2, w)$ be \fix{integers with $k_1 = k_2 = w \bmod 2$,} as above, with $k_1 > k_2 > 0$\fix{, and for $\star\in\{H,G\}$, write $D_\star$ for the cuspidal divisor of $X_\star$. Defining $\myom_G^{(\dots)}$ as in \eqref{eq:defiomegaGmodel}, and using \eqref{eq:pullback} and Remark \ref{rmk:twistgl2}, we see that $\iota$ induces} a pullback map:
  \[
   \iota^\star: H^1\left(X_{G, \fix{\sph}}, \myom_G^{(2-k_1, k_2, -w)}(-D_G)\right) \longrightarrow H^1\left(X_{H, \fix{\sph}}, \myom_H^{2-k_1 + k_2}(-D_H)\right) \otimes |\det|^w.
  \]
  \fix{Recall that the dualizing sheaves for $X_{G, \sph}$ and $X_{H, \sph}$ are $\myom_G^{(2,2,0)}(-D_G)$ and $\myom_H^{2}(-D_H)$, respectively. We obtain  dually a pushforward map}
  \begin{equation}
   \label{eq:classicalpushforward}
   \iota_\star: H^0\left(X_{\fix{H, \sph}}, \myom_H^{k_1-k_2}\right) \otimes |\det|^{-w} 
   \longrightarrow \  H^1\left(X_{\fix{G, \sph}},\myom_G^{(k_1,2-k_2,w)}\right).
  \end{equation}

 \subsection{$p$-adic pushforward map} 
 
  We now want to define a pushforward map with target the cohomology of the complex $M_{G}^\bullet(k_1, 2-k_2, w)$ of \cref{thm:classsphcomplex}. 

  \begin{longfix}
  Since the $\p_1$-torsion of $E \otimes_{\Z} \Oo_F$ is isomorphic to the $p$-torsion of $E$, the preimage under $\iota$ of the locus $\X_{G, \sph}^{1-\ord} \subseteq \X_{G, \sph}$ where the $\p_1$-torsion is \fix{ordinary} is the \fix{ordinary} locus $\X_{H, \sph}^{\ord}$. Hence there is a pushforward map 
  \begin{equation}
   \label{eq:padicpushforward}
   \iota_{p,\star}: H^0(\X_{H, \sph}^{\ord},\fix{\myom_H^{k_1-k_2}}) 
   \to H^1(\X^{1-\ord}_{G, \sph},\fix{\myom_G^{(k_1,2-k_2,w)}})
  \end{equation}
  which fits into a commutative diagram
  \[
   \begin{tikzcd}
    H^0(X_{H, \sph}, \fix{\myom_H^{k_1-k_2}}) \arrow{r}{\iota_\star}\arrow{d}{} &
    H^1(X_{G, \sph}, \fix{\myom_G^{(k_1,2-k_2,w)}})\arrow{d}\\
    H^0(\X_{H, \sph}^{\ord},\myom_H^{k_1-k_2}) \arrow{r}{\iota_{p,\star}} &
    H^1(\X^{1-\ord}_{G, \sph},\myom_G^{\fix{(k_1,2-k_2,w)}}).
   \end{tikzcd}
  \]
  where the vertical arrows are given by restriction.
  \end{longfix}

 \subsection{Compatibility of the $p$-adic pushforward with image  of deeper level cohomology}

  We will take the image under the above defined pushforward maps of $p$-adic modular forms for $\GL_2/\Q$ coming from classical modular forms of level $\Gamma_0(p^n)$ for some $n\ge 1$. By definition of \eqref{eq:padicpushforward}, we have a commutative diagram:
  \[
   \begin{tikzcd}
    H^0(X_{H,0}(p^n)_{\Qp}, \myom_H^{\fix{k_1-k_2}}) \arrow{r}\arrow{d}&
    H^1(X_{G,0}(\p_1^n)_{\Qp},\myom_G^{\fix{(k_1,2-k_2,w)}})\arrow{d}\\
    H^0(\X^{\ord}_{H,\sph,\Qp},\myom_H^{\fix{k_1-k_2}}) \arrow{r}{\iota_{p,\star}} &
    H^1(\X^{1-\ord}_{G,\sph,\Qp},\myom_G^{\fix{(k_1,2-k_2,w})})
   \end{tikzcd}
  \]
  where the top arrow is the level $p^n$-analogue of the classical pushforward \eqref{eq:classicalpushforward}. Combining the above diagram with \eqref{eq:diagrameta1etan}, we find, for $\eta_n$ as in \cref{defi:etan}, the following commutative diagram \fix{(with the same coefficient sheaves as the previous diagram, omitted to fit on the page)}
  \begin{longfix}
  \begin{equation}
   \label{eq:deeperpushforward}
   \begin{tikzcd}[column sep=small]
    H^0(X_{H,0}(p^n)_{\Qp}) \arrow{r}\arrow{d}&
    H^1(X_{G,0}(\p_1^n)_{\Qp})\arrow{d}\arrow[looseness=0.5, out=0, in=100, swap]{drrr}{p^{1-n}\langle-,\eta_n\rangle} & & &\\
    H^0(\X_{H,\sph, \Qp}^{\ord}) \arrow{r}{\iota_{p,\star}} &
    H^1(\X^{1-\ord}_{G,\sph,\Qp})\arrow{r}{e(U_{\p_1})} &
    e(U_{\p_1}) H^1(\X^{1-\m}_{G,\sph, \Qp})\arrow{r}{\simeq} &
    e(U_{\p_1}) H^1(X_{G,\Qp})
   \arrow{r}{\langle-,\eta_1\rangle } & \Qp
  \end{tikzcd}
  \end{equation}
  \end{longfix}

 \subsection{Functoriality of Igusa towers}
  \label{sect:funcIgusa}

  Consider the $\GL_2$ Igusa tower $\pi_H:\Ig_H(p^\infty) \to \X_{H, \sph}^{\ord}$ parametrising trivialisations of \fix{the multiplicative part $\Ee[p^\infty]^\circ$ of the $p$-divisible group of $\Ee$. Since the pullback to $\X_{H, \sph}^{\ord}$ of the $p$-divisible group $\Aa[\p_1^\infty]$ over $\Mm_G^{\tor}$ is identified with $\Ee[p^\infty]$, we have natural maps $\Ig_H(p^n) \xrightarrow{\iota_{n}}\Ig_G(\p_1^n)$ for each $n$.}

  \begin{prop}
   The natural morphism $\Ig_H(p^n) \xrightarrow{\iota_{n}}\Ig_G(\p_1^n)$ is finite for every $n\ge 1$. Moreover the following diagram is Cartesian:
   \[
    \begin{tikzcd}
     \Ig_H(p^n) \arrow{r}{\iota_{n}}\arrow[d, "\pi_H"] &
     \Ig_G(\p_1^n)\arrow[d, "\pi_G"] \\
     \fix{\X_{H, \sph}^{\ord}} \arrow{r}{\iota'} &
     \fix{\M^{\tor, 1-\ord}_{G, \sph}}.
    \end{tikzcd}
   \]
  \end{prop}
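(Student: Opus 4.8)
The plan is to realise both Igusa towers as $\operatorname{Isom}$-schemes and then check that the asserted fibre square is literally the base-change formula for such a scheme. Over the multiplicative locus the connected part $\Aa[\p_1^n]^\circ$ of the $\p_1^n$-torsion coincides with the canonical (Iwahori) subgroup $H\subset\Aa[\p_1^n]$, so $\Ig_G(\p_1^n)=\operatorname{Isom}_{\M^{1-\m}}(\mu_{p^n},\Aa[\p_1^n]^\circ)$; likewise $\Ig_H(p^n)=\operatorname{Isom}_{\X_H^{\m}}(\mu_{p^n},\E[p^n]^\circ)$, where $\E[p^n]^\circ$ is the multiplicative subgroup cutting out $\X_H^{\m}$. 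By the universal property of $\operatorname{Isom}$-schemes, $\Ig_G(\p_1^n)\times_{\M^{1-\m}}\X_H^{\m}$ represents isomorphisms $\mu_{p^n}\xrightarrow{\sim}\iota^\star(\Aa[\p_1^n]^\circ)$ of finite flat group schemes over $\X_H^{\m}$.

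The key input is a canonical identification $\iota^\star(\Aa[\p_1^n]^\circ)\cong\E[p^n]^\circ$ over $\X_H^{\m}$, compatible with the maps defining $\iota_n$. This comes from the very definition of $\iota$ on moduli: it sends $(\E,H_\E,\dots)$ to $(\Aa,H_\Aa,\dots)$, where $\Aa$ is the abelian surface with $\Oo_F$-multiplication canonically isogenous to $\E\otimes_\Z\Oo_F$ and $H_\Aa\subset\Aa[\p_1]$ corresponds to $H_\E$. Since $p$ splits in $F$ it is unramified there, so the comparison isogeny $\iota^\star\Aa\to\E\otimes_\Z\Oo_F$ (of degree a power of $\nm(\mathfrak{d}_F)$) is an isomorphism on $p$-divisible groups; using $\Oo_F\otimes\Zp=\Oo_{F,\p_1}\times\Oo_{F,\p_2}=\Zp\times\Zp$, its $\p_1$-component gives a canonical isomorphism $\iota^\star\Aa[\p_1^\infty]\cong\E[p^\infty]$ carrying $H_\Aa$ to $H_\E$, and hence $\iota^\star(\Aa[\p_1^n]^\circ)\cong\E[p^n]^\circ$ over the multiplicative locus. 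Feeding this into the displayed $\operatorname{Isom}$-scheme identifies $\Ig_G(\p_1^n)\times_{\M^{1-\m}}\X_H^{\m}$ with $\operatorname{Isom}_{\X_H^{\m}}(\mu_{p^n},\E[p^n]^\circ)=\Ig_H(p^n)$; and tracing through the construction of the morphism of Igusa towers induced by the universal isogeny (cf.\ the discussion after \cite[Lemma 4.2.6]{myhida}) shows the projection to $\Ig_G(\p_1^n)$ is precisely $\iota_n$. This is the Cartesian square.

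Finiteness is then automatic: the bottom map $\iota':\X_H^{\m}\to\M^{1-\m}$ is finite, being proper ($X_H$ is projective over $\Zp$ and $\mathcal{M}_G$ is separated) and quasi-finite (its composite with the finite map $\mathcal{M}_G\to X_G$ is the finite map $\iota$); hence its base change $\iota_n:\Ig_H(p^n)\to\Ig_G(\p_1^n)$ along $\Ig_G(\p_1^n)\to\M^{1-\m}$ is finite as well. Alternatively: each tower is a $(\Z/p^n)^\times$-torsor over its base and $\iota_n$ is a morphism of such torsors lying over the finite map $\iota'$, hence finite.

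The step I expect to be the genuine obstacle is the identification in the second paragraph: one must handle the connected-component/polarisation-type bookkeeping (the comparison isogeny changes the polarisation type, as the text notes, so the isomorphism of group schemes is set up component by component and then patched, exactly as in \cite{myhida}) and verify that the resulting isomorphism is the one actually implicit in the construction of $\iota_n$, rather than differing from it by a unit in $(\Z/p^n)^\times$ acting on $\mu_{p^n}$.
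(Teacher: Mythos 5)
Your argument is correct and is essentially the one the paper intends: the paper's proof is a one-line reference to the analogous statement for $\operatorname{GSp}_4$ in \cite[Prop.~4.6, Lemma~4.7]{padicLfnct}, whose content is exactly your Isom-scheme base-change argument combined with the prime-to-$p$ comparison isogeny $\iota^\star\Aa \sim \E\otimes_\Z\Oo_F$ identifying $\p_1$-divisible groups. Your closing remark about the polarisation-type bookkeeping and the possible $(\Z/p^n)^\times$-ambiguity correctly isolates the only delicate point, which is handled exactly as in \cite{myhida}.
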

  
  \begin{proof}
   Similar to \cite[Prop. 4.6, Lemma 4.7]{padicLfnct}.
  \end{proof}
  
  Thanks to the above proposition, we obtain
  \begin{equation}\label{eq:pullbackigusa}
   (\iota')^\star(\pi_{G,\star}\Oo_{\Ig_G(\p_1^{\infty})})=\pi_{H,\star}\Oo_{\Ig_{H}(p^{\infty})}
  \end{equation}
  compatibly with the action of $\Z_p^\times$ on both sides. There is a similar picture for the dual torsors $\Ig_H^{\vee}(p^\infty)$ and $\Ig_G^{\vee}(\p_1^\infty)$, and we have a canonical isomorphism between $\Ig_H(p^\infty)$ and $\Ig_H^{\vee}(p^\infty)$ given by the polarisation of the universal elliptic curve (cf.~\cref{rmk:twistgl2}).
  
  Proceeding for $H$ as we did for $G$, we can define a sheaf of $\Lambda \otimes \X^\m_H$-modules $\Omega_H^{2\kappa}$ interpolating the sheaves $\myom_H^{2a}$ for all $a \in \Z$\fix{. The space $H^0(\X_{H, \sph}^{\m}, \Omega_H^{2\kappa})$ is the space of $\Lambda$-adic modular forms of weight $2\kappa$, denoted $M_H(2\kappa)$ in the introduction. From} \eqref{eq:pullbackigusa} we obtain pushforward maps of $\Lambda$-modules
  \begin{equation}
   \label{eq:Igusapushforward}
   \iota_{p,\star}^\kappa: H^0(\X_H^{\m}, \Omega_{H}^{k_1-k_2 + 2\kappa}) \to H^1(\X_G^{1-\m},\Omega_G^{(k_1 + 2\kappa,2-k_2,w)}),
  \end{equation}
  interpolating the map of \eqref{eq:padicpushforward} in weight $(k_1 + 2a, k_2, w)$ for any $a \in \Z$ (and likewise for cuspidal sheaves).

\section{Nearly sheaves}
\label{sec:nearly}

 \subsection{Sheaves of nearly holomorphic modular forms}

  \begin{defi}
   Given an even integer $k \ge 0$, we define the following sheaves over $X_{H, \sph}$:
   \begin{itemize}
    \item $\Nn^0_{H} \coloneqq\sym^k(\fix{\hh_{\Ee}^{1}}) \otimes  \left(\wedge^2\hh_{\Ee}^{1}\right)^{-\tfrac{k}{2}}$;
    \item $\Nn^1_{H} \coloneqq \sym^k(\fix{\hh_{\Ee}^{1}}) \otimes \omega_{\Ee}^2(-D_H)\otimes   \left(\wedge^2\hh_{\Ee}^{1}\right)^{-1-\tfrac{k}{2}}$.
   \end{itemize}
  \end{defi}

  Note that these sheaves are Serre dual to each other, and the action of the centre of $\GL_2$ is trivial on both (see Remark \ref{rmk:twistgl2} for the reason of the choice of the twist). Since we can identify $\omega_\Ee^k$ with the top graded piece in the Hodge filtration of $\sym^k(\hh^1_{\Ee})$, and dually $\omega_\Ee^{-k}$ is identified with the bottom graded piece, we have maps
  \begin{equation}\label{eq:mapsnearlyclassicalH}
   \myom^k_H \hookrightarrow \Nn^0_H, \ \ \ \ \Nn^1_H \twoheadrightarrow \myom_H^{2-k}(-D_H),
  \end{equation}
  Note that these maps are transposes of each other with respect to Serre duality.

  \begin{defi}\fix{ We write $M_k^{\mathrm{nh}}$ for $H^0(X_H, \Nn^0_H)$, the space of \emph{nearly-holomorphic modular forms} of weight $k$.}\end{defi}

  As described in \cite{urban} for example, the map $\myom^k_H \hookrightarrow \Nn^0_{H}$ has a $C^\infty$ section over $\R$, the \emph{Hodge splitting} (which identifies $H^0(X_H, \Nn^0_H)$ \fix{with a subspace of the real-analytic sections of $\myom^k$; the image of this map is the space of nearly-holomorphic modular forms in the sense of Shimura}). If the level group at $p$ is of the form $K_0(p^n)$, it also has a $p$-adic section over the multiplicative locus $\X_H^{\mathrm{m}}$, the \emph{unit-root splitting}.

 \subsection{Sheaves of (partially) nearly holomorphic Hilbert modular forms}

  We want to proceed analogously for the Hilbert modular surface. Recall that over the \fix{compactified moduli space $\Mm_{G, \sph}^{\tor}$} we can consider the sheaf $\hh^1_{\Aa} = \hh^1_{\Aa,1}\oplus \hh^1_{\Aa,2}$.

  Now fix $(k_1, k_2, w)$ with $k_i \ge 0$ and $k_1 = k_2 = w \bmod 2$, and set
  \[ 
   \wedge^2\hh_{0,1}\coloneqq \left((\wedge^2\hh^1_{\Aa,1})^{\tfrac{w-k_1}{2}}\right)
    \otimes \left((\wedge^2\hh^1_{\Aa,2})^{\tfrac{w-2+k_2}{2}}\right) 
   \ \ \ \text{and} \ \ \
   \wedge^2\hh_{1,0}\coloneqq \left((\wedge^2\hh^1_{\Aa,1})^{\tfrac{k_1 + 2-w}{2}}\right)
    \otimes \left((\wedge^2\hh^1_{\Aa/H_2,2})^{\tfrac{-k_2-w}{2}}\right).
  \]

  \begin{defi}
   We define the following sheaves over $X_{G, \sph}$:
   \begin{itemize}
    \item $\Nn^{0,1}_G\coloneqq (\sym^{k_1} \hh_{\Aa,1})\otimes \omega_{\Aa,2}^{2-k_2}\otimes \wedge^2\hh_{0,1} $;
    \item $\Nn^{1,0}_G\coloneqq (\sym^{k_1} \hh_{\Aa,1} \otimes \omega_{\Aa, 1}^{2}) \otimes \omega_{\Aa, 2}^{k_2} \wedge^2\hh_{1,0}(-D_G)$.
   \end{itemize}
   (Note that these sheaves do indeed descend to $X_{G, \sph}$, since the centre acts as $\nm^w$ on $\Nn^{0, 1}_G$ and as $\nm^{-w}$ on $\Nn^{1,0}_{G}$.) We call an element in $H^1(X_{G, \sph}, \Nn^{0,1}_{G})$ a Hilbert modular form nearly holomorphic in the first variable and anti-holomorphic in the second variable. As before, the Serre dual of this group is  $H^1(X_{G, \sph}, \Nn^{1,0}_G)$.
  \end{defi}

  As in \eqref{eq:mapsnearlyclassicalH}, we obtain natural maps
  \begin{equation}
   \label{eq:mapsnearlyclassicalG}
   \myom_G^{(k_1,2-k_2,w)}\hookrightarrow \Nn^{0,1}_{G},\qquad
   \Nn^{1, 0}_{G}\twoheadrightarrow \myom_G^{(2-k_1, k_2, -w)}(-D_G)
  \end{equation}
  which are Serre duals of each other.

 \subsection{Nearly pushforward maps and unit root splittings} 

  Finally, we want to extend $\iota_{\star}$ to the sheaves considered in \S\ref{sec:nearly}. From the construction we obtain a map of sheaves over $X_{H, \sph}$
  \[
  \iota^{\star}(\Nn_G^{1,0}) \to \Nn_H^1,
  \]
  yielding a pullback map $\iota^*_{\text{nearly}} : H^1(X_{G, \sph}, \Nn^{1,0}_{G}) \to H^1(X_{H, \sph}, \Nn^1_H)$. Dually we find
  \[
   \iota_{\star}^{\text{nearly}}: H^0(X_{H, \sph},\Nn_{H}^0) \to H^1(X_{G, \sph}, \Nn_{G}^{0,1}).
  \]
  
  \fix{Using the relation $\iota^{-1}(\X_{G, \sph}^{1-\ord}) = X_{H, \sph}^{\ord}$, as in} the previous section, \fix{we} obtain a map:
  \[
  \iota_{p,\star}^{\text{nearly}}: H^0(\X_{H, \sph}^{\ord}, \Nn_{H}^0) \to H^1(\X_{G, \sph}^{1-\ord}, \Nn_{G}^{0,1}),
  \]
  which is compatible with the classical pushforward map (not only for \fix{$X_{H, \sph}$ and $X_{G, \sph}$}, but also for higher $p$-power levels as in \eqref{eq:deeperpushforward}).
  
  \begin{prop}
   We have $p$-adic unit root splittings $u_H,u_G$ of \eqref{eq:mapsnearlyclassicalH},\eqref{eq:mapsnearlyclassicalG} making the following commutative
   \[
    \begin{tikzcd}
     H^0(\fix{\X_{H, \sph}^{\ord}, \Nn_H^0}) 
      \arrow{r}{\iota_{p,\star}^{\text{nearly}}}\arrow{d}{u_H} &
     H^1(\fix{\X_{G, \sph}^{1-\ord}, \Nn_{G}^{0,1}})\arrow{d}{u_G}\\
     H^0(\fix{\X_{H, \sph}^{\ord},\myom_H^{k_1-k_2}})  
      \arrow{r}{\iota_{p,\star}} &
     H^1(\fix{\X_{G, \sph}^{1-\ord},\myom_G^{(k_1,2-k_2,w)}})
    \end{tikzcd}
   \]
  \end{prop}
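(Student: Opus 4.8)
The plan is to build $u_H$ and $u_G$ out of the Frobenius-canonical unit root decomposition of de Rham cohomology over the ordinary locus, and then to deduce the commutativity from the fact that the isogeny $\E\otimes_\Z\Oo_F\to\iota^\star\Aa$ underlying $\iota$ respects that decomposition.

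First I would recall the \emph{unit root splitting} (as in \cite{katzpadic}): over a formal or rigid multiplicative locus, $\hh^1_{\dr}$ carries a canonical decomposition $\hh^1_{\dr}=\omega\oplus U$ in which $U$ is the unique horizontal, Frobenius-stable complement of the Hodge filtration; equivalently $U$ is the ``\'etale part'' $\operatorname{Lie}$ of the dual, and on the Igusa tower it is the line trivialised by the tautological section of the dual torsor $\Ig^\vee$. Applied to $\E$ over $\X_H^{\m}$ this gives $\sym^{k}\hh^1_{\dr}(\E)=\bigoplus_{j=0}^{k}\omega_\E^{\,j}\otimes U_\E^{\,k-j}$, and projecting onto the summand $j=k$, then tensoring with $(\wedge^2\hh^1_\E)^{-k/2}$, yields a retraction $u_H\colon\N^0_{H,n}\to\myom_H^{k}$ of the first arrow of \eqref{eq:mapsnearlyclassicalH}. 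Doing the same at $\p_1$ with the $\sigma_1$-component $\hh^1_{\Aa,1}=\omega_{\Aa,1}\oplus U_1$ over $\X_G^{1-\m}$, and leaving the anti-holomorphic $\sigma_2$-factors and the $\wedge^2$-twists untouched, produces $u_G\colon\N^{0,1}_{G,n}\to\myom_G^{(k_1,2-k_2,w)}$ splitting the first arrow of \eqref{eq:mapsnearlyclassicalG}. On the respective multiplicative loci one has $\tilde{\myom}\cong\myom$ and $\tilde{\N}\cong\N$ via powers of the canonical sections of $\Oo(D^{\m})$ and $\Oo(D^{1-\et})$, so $u_H$ and $u_G$ descend to the twisted integral sheaves, giving maps $\tilde{\N}^0_{H,n}\to\tilde{\myom}_H$ and $\tilde{\N}^{0,1}_{G,n}\to\tilde{\myom}_G$.

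The heart of the matter is that $\iota$ intertwines these two splittings. Using the Cartesian square of Igusa towers $\Ig_H(p^n)\to\Ig_G(\p_1^n)$ proved above, together with its analogue for the dual torsors underlying \eqref{eq:pullbackigusa}, the isogeny $\E\otimes_\Z\Oo_F\to\iota^\star\Aa$ matches the tautological trivialisations; being compatible with Frobenius, it therefore identifies $\omega_\E$ with $\iota^\star\omega_{\Aa,1}$ (the $(n_1,n_2)=(1,0)$ case of the isomorphism of automorphic sheaves recalled just after the definition of $\iota$) and $U_\E$ with $\iota^\star U_1$. Hence the unit root decomposition of $\iota^\star\hh^1_{\Aa,1}$ is the pullback of that of $\hh^1_{\dr}(\E)$, and the sheaf map $\N^1_{H,n}\to\iota^\star\N^{1,0}_{G,n}$ of the previous subsection is, over the multiplicative loci, compatible with the transposed unit root sections, i.e.\ $\iota^\star(u_G^{t})=u_H^{t}$ on $\tilde{\myom}_H(-D_H)$. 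Dualising this identity of sheaf maps via Serre duality --- legitimate since $\iota$ is finite, so $\iota_\star$ is exact and adjoint to $\iota^\star$ through a trace, which is precisely how $\iota_\star$ and $\iota_\star^{\text{nearly}}$ were defined --- gives the commutativity of the square of the proposition for the classical pushforwards at Iwahori level, and via the level-$p^n$ versions of \eqref{eq:classicalpushforwardtwist} as in \eqref{eq:deeperpushforward} at all $p$-power levels. Finally, since $\iota_{p,\star}$ and $\iota_{p,\star}^{\text{nearly}}$ are assembled from these classical maps by the colimit over twists by $D^{\et}$, $D^{1-\et}$ followed by the inverse limit over $m$ (as in \eqref{eq:padicpushforward}, \eqref{eq:nearlypushforwardpadic}), and $u_H$, $u_G$ are defined on the formal multiplicative loci compatibly with all those twists, the commutativity passes to the limit.

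The step I expect to be the main obstacle is the middle one: checking \emph{integrally} --- over the formal multiplicative locus, and compatibly with the twisted models $\tilde{\N}^{0,1}_{G,n}$ versus $\tilde{\N}^0_{H,n}$ and $\tilde{\myom}_G$ versus $\tilde{\myom}_H$ --- that the isogeny identifies the $\p_1$-unit root subspace of $\hh^1_{\Aa,1}$ with the unit root subspace of $\hh^1_{\dr}(\E)$. This reduces to the identity $\iota^\star D^{1-\et}=D^{\et}$ of \eqref{eq:pullbacketalemult} together with the fact that $\omega_{\Aa,1}\cong\omega_{\Aa/H_1,1}$ (resp.\ $\omega_\E\cong\omega_{\E/H}$) on the multiplicative locus, so that the unit root projection is a genuine integral splitting there rather than merely one after inverting $p$; once this bookkeeping is in place, the remaining steps are formal.
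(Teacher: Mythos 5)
Your proposal is correct and follows essentially the same route as the paper: the paper's (much terser) proof likewise reduces everything to the observation that the unit-root subspace is the canonical Frobenius-stable complement of $\omega$ in $\hh^1_{\dr}$, and that the isomorphism $\iota^*(\hh^1_{\Aa,1}) \cong \hh^1_{\E}$ is compatible with the Frobenii, so the two splittings are intertwined by $\iota^*$. Your additional remarks on dualising through the trace and passing to the colimit/limit, and on the integral bookkeeping over the multiplicative locus, are elaborations of steps the paper leaves implicit rather than a different argument.
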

  
  \begin{proof}
   It suffices to show that the unit-root splittings $u_H$ and $u_G$ are compatible via $\iota^*$. The unit-root splitting $u_H$ follows from the fact that the unit-root subspace for the Frobenius map on the fibres is a canonical complement of $\omega_\Ee$ inside $\hh^1_{\Ee}$. The splitting $u_G$ has a similar description using the action of Frobenius on $\hh^1_{\Aa, 1}$; so it suffices to note that the isomorphism $\iota^*\left(\hh^1_{\Aa, 1}\right) \cong \hh^1_{\Ee}$ is compatible with the Frobenii.
  \end{proof}

\section{Coherent classes attached to cusp forms}
 \label{sect:coherent}

 We now change tack slightly, and introduce a specific cup-product in coherent cohomology which is related to $L$-values. For the present section $p$ plays no role (but we shall later use the theory of the previous sections to interpolate these cup-products $p$-adically).

 Throughout this section $\Pi$ is a cuspidal automorphic representation of weight $(k_1, k_2)$, where $k_i \ge 1$ and $k_1\equiv k_2 \bmod 2$, and conductor $\mathfrak{N}$. Thus $\Pif$ has 1-dimensional invariants under the subgroup $K_1(\mathfrak{N}) = \{ g : g = \stbt{\star}{\star}{0}{1} \bmod \mathfrak{N}\}$. \fix{For each place $v$ of $\Q$ we write $\Pi_v = \bigotimes_{w \mid v} \Pi_w$.}

 \subsection{Whittaker models}

 \begin{notation}[Additive characters]\label{not:addchar} \
  \begin{enumerate}[(i)]
   \item $\psi$ denotes the additive character of $\A_{\Q} / \Q$ satisfying $\psi_\infty(x_\infty) = \exp(-2\pi i x_\infty)$ for $x_\infty \in \R$.
   \item $\psi_F$ denotes the additive character of $\A_F / F$ given by $\psi_F(y) = \psi(\tr_{F / \Q}(\tfrac{y}{\sqrt{D}}))$, where $\sqrt{D}$ denotes the unique square root in $F$ of the discriminant $D = \operatorname{disc}_F$ such that $\sigma_1(\sqrt{D}) > 0$.
  \end{enumerate}
 \end{notation}

 Hence $\psi_F$ is trivial on $\A_\Q \subset \A_F$, and its conductor is 1 (i.e.~it is invariant under $\widehat{\Oo}_F$, and not under any larger fractional ideal). Since any cuspidal automorphic representation of $\GL_n$ is generic, every $\phi \in \Pi$ has a Whittaker expansion
 \begin{equation}
  \label{eq:whittakerfunc}
  \phi(g) = \sum_{\alpha \in F^\times} W_{\phi}\left(\stbt \alpha 0 0 1 g\right), \qquad W_{\phi}(g) = \int_{F \backslash \A_F} \fix{\phi(\stbt{1}{x}{0}{1}g)} \psi_F(-x)\, \mathrm{d}x
 \end{equation}
 where $\mathrm{d}x$ is the Haar measure on $\A_F$ giving $F \backslash \A_F$ volume 1. \fix{The space of functions $W_{\phi}$ on $\GL_2(\A_F)$ is the \emph{Whittaker model} $\Ww(\Pi)$ of $\Pi$; this factors as $\Ww(\Pi_\infty) \otimes \Ww(\Pif)$, where $\Ww(\Pif)$ is the restricted tensor product $\sideset{}{'}\bigotimes_v \Ww(\Pi_v)$ of local Whittaker models at the finite places.}

 \begin{defi}\label{def:whittakerinf} \
  \begin{enumerate}[(i)]
  \item  We define the \emph{normalised Whittaker function at $\infty$} as the function on $\GL_2(F \otimes \R)$ which is supported on the identity component and satisfies
   \[ W_\infty^{\mathrm{ah}, 1}\left( \stbt y x 0 1 \stbt{t \cos \theta}{t\sin \theta}{-t\sin \theta} {t\cos \theta}\right) = \operatorname{sgn}(t)^{\kk}\cdot y^{\kk/2} \cdot e^{i (-k_1 \theta_1 + k_2 \theta_2)} \cdot e^{\tfrac{2\pi i }{\sqrt{D}}( (-x_1 + iy_1) + (x_2 + iy_2))}; \]
   for $x, y, t, \theta \in F \otimes \R$ with $y \gg 0$. This is a basis of the minimal $K$-type subspace in the Whittaker model of the $\sigma_1$-antiholomorphic, $\sigma_2$-holomorphic part of $\Pi_\infty$\fix{, where $K$ denotes the maximal connected compact subgroup $\operatorname{SO}_2(F \otimes \R)$ of $\GL_2(F \otimes \R)$)}. 
   \item Casselman's theory of the new vector shows that the 1-dimensional space of $K_1(\mathfrak{N})$-invariants in the Whittaker model of $\Pif$ has a unique basis $W_\f^{\new}$ with $W_\f^{\new}(1) = 1$.
   \item We let $\phi_{\new\fix{, \Pi}}^{\mathrm{ah}, 1}$ be the unique function in $\Pi$ whose Whittaker function factors as $W_\f^{\new} W_\infty^{\mathrm{ah}, 1}$.
  \end{enumerate}
 \end{defi}

 (If we identify Hilbert modular forms with functions on $\GL_2(\AFf) \times \mathcal{H}_F$ where $\mathcal{H}_F$ is a product of upper half-planes, this $\phi_{\new\fix{, \Pi}}^{\mathrm{ah}, 1}$ corresponds to the form $\mathcal{F}^{\mathrm{ah}, 1}$ defined in \fix{\cite[Lemma 5.2.1]{HMS}}.)

 \subsection{Periods}
 \label{sect:periods}

  We choose $w$ with $k_1 \equiv k_2 \equiv w \bmod 2$, and a number field $L$ such that $\Pi_\f \otimes \|\det\|^{w/2}$ is definable over $L$. \fix{Then the Whittaker model $\Ww(\Pif \otimes \|\det\|^{w/2})$ is naturally the base-extension to $\C$ of an $L$-linear representation (consisting of the $L$-linear span of the translates of the normalised new vector). We write $\Ww(\Pif)_L$ for the image of this subspace in $\Ww(\Pif)$ (which is independent of $w$).} If $X_{G, \sph}$ denotes the Hilbert modular variety of level $K_1(\mathfrak{N})$, the $L$-vector space
  \[ H^1\big(X_{G, \sph, L}, \myom_G^{(2-k_1,k_2,\fix{-w})}\big)[\Pif] \]
  is 1-dimensional; and we may choose a basis $\nu_\Pi$ of this space. Note that the choice of $\nu_\Pi$ determines an isomorphism of $\GL_2(\AFf)$-representations
  \begin{equation}
   \label{eq:whittakermap}
   \Ww(\Pif)_{\fix{L}} \otimes \|\cdot\|^{w/2} \to \varinjlim_{K} H^1(X_G(K)_\C, \myom^{(2-k_1, k_2, \fix{-w})})[\Pif],
  \end{equation}
    characterised by sending $W^{\new}_\f$ to $\nu_\Pi$.

  After tensoring with $\C$, there is a canonical isomorphism between $H^1\left(X_{G, \sph, L}, \myom_G^{(2-k_1,k_2,\fix{-w})}\right)$ and the space of Hilbert modular forms of weight $(k_1, k_2)$ and level $K_1(\mathfrak{N})$ which are anti-holomorphic at $\sigma_1$ and holomorphic at $\sigma_2$. By comparing Hecke eigenvalues, the image of $\nu_\Pi$ must be a scalar multiple of $\phi_{\new\fix{,\Pi}}^{\mathrm{ah}, 1}$.

  \begin{defi}
   We define the period $\Omega_\infty(\Pi) \in \C^\times$ (the \emph{occult period} for $\Pi$ at $\sigma_1$ as in \cite{harrishilbert}) to be the scalar such that $\phi_{\new\fix{,\Pi}}^{\mathrm{ah}, 1} = \Omega_\infty(\Pi) \cdot \mathcal\nu_\Pi$. (This is uniquely determined once $\nu_\Pi$ is chosen; its class in $\C^\times / L^\times$ does not depend on $\nu_\Pi$.)
  \end{defi}

\subsection{Siegel sections and adelic Eisenstein series}

We now recall a very general construction of adelic Eisenstein series (as for example in \cite{jacquet}). \fix{Let $\Ss(\A^2, \C)$ denote the space of Schwartz functions $\A^2 \to \C$. For $\Phi \in \Ss(\A^2, \C)$}, and $\chi$ a Dirichlet character, we define a global Siegel section and associated Eisenstein series by
\begin{subequations}
\begin{align}
 \label{eq:siegelsec}
 f^{\Phi}(g ; \chi, s)\coloneqq\|\operatorname{det} g\|^s \int_{\A^{\times}} \Phi((0, a) g) \hat\chi(a)\|a\|^{2 s}\,\mathrm{d}^{\times} a, \\
 E^{\Phi}(g ; \chi, s)\coloneqq\sum_{\gamma\in B(\Q)\backslash \GL_2(\Q)} f^{\Phi}(\gamma g; \chi, s).
\end{align}
\end{subequations}
Here $\hat{\chi}$ is the character of $\Q^{\times}\backslash \A^{\times}/\R^{\times}_{>0}$ corresponding to $\chi$ as in \cite[\S2.2]{padicLfnct}. The sum converges absolutely and uniformly on any compact subset of $\{s: \operatorname{Re}(s)>1\}$, it has a meromorphic continuation in $s$, and defines a function on the quotient $\mathrm{GL}_2(\Q) \backslash \GL_2(\A)$ that transforms under the center by $\hat\chi^{-1}$.

We now make a choice of the Schwartz function at $\infty$ which recovers familiar holomorphic or nearly-holomorphic Eisenstein series. For $k\in \Z_{\ge 1}$, let $\Phi_{\infty}^{(k)} \in \mathcal{S}(\R^2, \C)$ be the function given by
\[
 \Phi_{\infty}^{(k)}(x, y) \coloneqq 2^{1-k}(x+i y)^k e^{-\pi\left(x^2+y^2\right)}.
\]
Then, for $\Phi=\Phi_\f\cdot \Phi_{\infty}^{(k)}$, where $\Phi_\f \in \mathcal{S}(\Af^2, \C)$ is arbitrary, and $1-\tfrac{k}{2} \le s \le \tfrac{k}{2}$ with $s = \tfrac{k}{2} \bmod \Z$, the function $E^{k, \Phi_\f}(-; \chi, s) \coloneqq E^{\Phi_\f \cdot \Phi_{\infty}^{(k)}}(-; \chi, s)$ is a nearly-holomorphic modular form. Moreover, if $\Phi_\f$ and $\chi$ take values in a number field $L$, then this form is defined over $L$ as a coherent cohomology class.

\subsection{The global zeta integral}

 \begin{defi}\label{def:globalzeta}
  Let $\nu$ be a Dirichlet character. For $\Pi$ as in \cref{sect:coherent}, $\phi \in \Pi$, and $\Phi \in \Ss(\fix{\A^2, \C})$, we define
  \[ Z(\phi, \Phi; \nu, s) = \int_{Z_H(\A) H(\Q) \backslash H(\A)} \phi(h) \hat\nu(\det h)^{-1} E^{\Phi}(h; \chi \nu^{-2}, s)\, \mathrm{d}h,\]
  where $\mathrm{d}h$ is the Haar measure on $H(\A)$ (normalised as in \cite[Remark 1.6]{harriskudla92}), and $\hat\nu$ the adelic character associated to $\nu$.
 \end{defi}

 Let $W_{\phi}$ be the Whittaker function of $\phi$ as in \eqref{eq:whittakerfunc}. 
 We suppose $W_\phi = \prod_v W_v$ and $\Phi = \prod_v \Phi_v$ are pure tensors. Then the global integral factors as $Z(\phi, \Phi; \nu, s) = \prod_v Z(W_v, \Phi_v; \hat\nu_v, s)$, where
 \[ Z(W_v, \Phi_v; \hat\nu_v, s) \coloneqq \int_{(Z_H N_H \backslash H)(\Q_v)} W_v(h_v) \hat\nu_v(\det h_v)^{-1} f^{\Phi_v}(h_v; \hat\chi_v \hat\nu_v^{-2}, s)\, \mathrm{d}h_v, \]
 \fix{where $f^{\Phi_v}(\dots)$ is the local Siegel section attached to $\Phi_v$ (defined analogously to \eqref{eq:siegelsec}; see \cite[\S 8.1]{padicLfnct}).}
 \begin{prop}
  \label{prop:localAsai}
  Suppose $\ell$ is a finite place. \fix{For each $W_\ell$ and $\Phi_\ell$, there exists a polynomial $C_\ell(W_\ell, \Phi_\ell; X) \in \C[X, X^{-1}]$ such that for all $\nu$ unramified at $\ell$, we have an equality of meromorphic functions of $s$}
  \[ Z(W_\ell, \Phi_\ell; \hat\nu_\ell, s) = L_{\As}(\Pi_\ell \times \hat\nu_\ell^{-1}, s) \cdot C_\ell\left(W_\ell, \Phi_\ell; \nu(\ell)^{-1} \ell^{-s}\right),\]
  \fix{where $L_{\As}(\Pi_\ell \times \hat\nu_\ell^{-1}, s)$ denotes the local Asai $L$-factor (as introduced in \cite[Appendix]{flicker93}).}
  Moreover,
  \begin{enumerate}[(i)]
   \item for any $\ell$, the ideal of $\C[X, X^{-1}]$ generated by the $C_\ell\left(W_\ell, \Phi_\ell; X\right)$ is the unit ideal;
   \item if $\Pi_\ell$ is unramified and $W_\ell, \Phi_\ell$ are the normalised spherical data, then $C_\ell\left(W_\ell, \Phi_\ell; X\right) = 1$.
  \end{enumerate}
 \end{prop}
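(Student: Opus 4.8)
The plan is to analyse the local zeta integral $Z(W_\ell, \Phi_\ell; \hat\nu_\ell, s)$ directly, by unfolding it over the rational torus and then invoking the local theory of Asai $L$-factors. Using the Iwasawa decomposition $\GL_2(\Q_\ell) = N_H(\Q_\ell)\,T(\Q_\ell)\,K_\ell$ with $K_\ell = \GL_2(\Z_\ell)$, one identifies $(Z_H N_H\backslash H)(\Q_\ell)$ with $\Q_\ell^\times\times K_\ell$ via $(y,k)\mapsto \stbt{y}{0}{0}{1}k$; since $(0,a)\stbt{y}{0}{0}{1} = (0,a)$ the Siegel section satisfies $f^{\Phi_\ell}(\stbt{y}{0}{0}{1}k;\hat\chi_\ell\hat\nu_\ell^{-2},s) = \|y\|_\ell^s\, f^{\Phi_\ell}(k;\hat\chi_\ell\hat\nu_\ell^{-2},s)$, and (as $\nu$ is unramified at $\ell$, so that $\hat\nu_\ell$ is trivial on $K_\ell$) one obtains, up to a normalisation of Haar measures,
\[
 Z(W_\ell, \Phi_\ell; \hat\nu_\ell, s) = \int_{K_\ell} f^{\Phi_\ell}(k;\hat\chi_\ell\hat\nu_\ell^{-2},s)\left(\int_{\Q_\ell^\times} W_\ell\!\left(\stbt{y}{0}{0}{1}k\right)\hat\nu_\ell(y)^{-1}\|y\|_\ell^{s-1}\,\mathrm{d}^\times y\right)\mathrm{d}k.
\]
Taking $\Phi_\ell$ to be $K_\ell$-finite, it suffices to analyse the inner integral for each fixed $k$.

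The inner integrand is, for fixed $k$, the restriction to the subtorus $\{\stbt{y}{0}{0}{1} : y\in\Q_\ell^\times\}$ of a Whittaker function of the $\GL_2(F_\ell)$-representation $\Pi_\ell$. By the theory of the Kirillov model, $y\mapsto W_\ell(\stbt{y}{0}{0}{1}k)$ is locally constant, vanishes for $\|y\|_\ell$ large, and for $\|y\|_\ell$ small is a \emph{finite function}, i.e.\ a finite $\C$-linear combination of terms $\mu(y)\,\|y\|_\ell^{\sigma}\,v_\ell(y)^m$ whose exponents are controlled by the local data of $\Pi_\ell$. Breaking the $\Q_\ell^\times$-integral according to whether $\|y\|_\ell$ is large (zero contribution), bounded away from $0$ and $\infty$ (a Laurent-polynomial contribution, the integrand being locally constant with compact support there), or small, one sees that each small-$\|y\|_\ell$ term contributes a geometric series in $\nu(\ell)^{-1}\ell^{-s}$, whose denominator divides a factor of the local Asai $L$-function. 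That the product of all these denominators is exactly a divisor of $L_{\As}(\Pi_\ell\times\hat\nu_\ell^{-1},s)^{-1}$ is precisely the substance of the local theory of Asai $L$-factors: when $\ell$ splits in $F$ this is the $\GL_2\times\GL_2$ local Rankin--Selberg computation of Jacquet--Piatetski-Shapiro--Shalika, and when $\ell$ is inert or ramified it is Flicker's analysis of the asymptotics of the restricted Whittaker function (see also the work of Kable and of Matringe). This yields the factorisation with $C_\ell(W_\ell,\Phi_\ell;X)\in\C[X,X^{-1}]$. I expect this divisibility --- in particular the uniform control of the restricted Whittaker function in the non-split cases --- to be the main technical obstacle; in the split case it is entirely standard.

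For part (i), fix $s_0\in\C$ and put $X_0 = \nu(\ell)^{-1}\ell^{-s_0}$; one must produce $W_\ell,\Phi_\ell$ with $C_\ell(W_\ell,\Phi_\ell;X_0)\neq 0$. Choosing $\Phi_\ell$ so that $f^{\Phi_\ell}(-;\hat\chi_\ell\hat\nu_\ell^{-2},s)$ is concentrated near the identity coset in $K_\ell$, and (using that $\Pi_\ell$ is generic) choosing $W_\ell$ supported near the identity of $\GL_2(F_\ell)$, the double integral collapses to a single geometric series whose leading term can be prescribed; one then arranges $C_\ell$ to be a non-zero monomial, in particular non-vanishing at $X_0$. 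Equivalently, this is the familiar statement that the local Asai $L$-factor is the greatest common divisor of the local zeta integrals, as in the references above. Since the family $\{C_\ell(W_\ell,\Phi_\ell;\,\cdot\,)\}$ therefore has no common zero in $\C^\times$, it generates the unit ideal in the principal ideal domain $\C[X,X^{-1}]$.

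For part (ii), take $\Pi_\ell$ unramified, $W_\ell = W_\ell^\circ$ the spherical Whittaker function with $W_\ell^\circ(1)=1$, and $\Phi_\ell = \mathbf{1}_{\Z_\ell^2}$. Then $f^{\Phi_\ell}$ is $K_\ell$-invariant (equal to the normalised spherical section times the local abelian $L$-factor $L_\ell(2s,\hat\chi_\ell\hat\nu_\ell^{-2})$), so the $k$-integral collapses and the remaining torus integral becomes an explicit sum over $n\ge 0$ of $W_\ell^\circ(\stbt{\varpi^n}{0}{0}{1})$ against $(\nu(\ell)^{-1}\ell^{-s})^n$ times a unit. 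Inserting the Casselman--Shalika formula for $W_\ell^\circ$ and summing the resulting geometric series --- distinguishing the cases where $\ell$ is split, inert or ramified in $F$, and matching the resulting Euler factor with our definition of $L_{\As}$ via the local Langlands correspondence, following \cite{loefflerwilliams18} --- yields exactly $L_{\As}(\Pi_\ell\times\hat\nu_\ell^{-1},s)$, so $C_\ell(W_\ell^\circ,\Phi_\ell^\circ;X)=1$. This is the classical unramified Asai computation (due in the non-split case to Flicker, and in the split case to Jacquet--Piatetski-Shapiro--Shalika).
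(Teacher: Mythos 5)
Your overall strategy---unfold over the torus via the Iwasawa decomposition, reduce to the asymptotics of the restricted Whittaker function, and quote the local theory of Asai (resp.\ Rankin--Selberg) integrals for the divisibility and gcd statements---matches the paper's, which simply reduces to $\hat\nu_\ell = 1$ by the substitution $s \mapsto s+\alpha$ with $\ell^\alpha = \nu(\ell)$ and then cites \cite{grossi19} and \cite[Remark 2.14]{loefflerwilliams18} for the equality of the fractional ideal of zeta-integral values with the ideal generated by the Langlands--Deligne factor. Your unfolding is correct (the modulus character is accounted for by the exponent $s-1$), and your part (ii) is the classical unramified computation, reproduced in the paper's \S\ref{sec:imprimitiveasai}.

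However, your explicit argument for part (i) has a genuine gap. Write $L_{\As}(\Pi_\ell\times\hat\nu_\ell^{-1},s)^{-1} = Q(X)$ with $Q\in\C[X]$, $Q(0)=1$, so that $C_\ell(W_\ell,\Phi_\ell;X) = Q(X)\cdot Z(W_\ell,\Phi_\ell;\hat\nu_\ell,s)$. Any choice of data for which $Z$ is entire in $X$ (a Laurent polynomial---which is what localising $W_\ell$ and $\Phi_\ell$ near the identity produces, and a fortiori any choice making $Z$ a non-zero monomial) forces $C_\ell$ to \emph{vanish} at every zero $X_0$ of $Q$, i.e.\ at every pole of the $L$-factor; and a ``single geometric series'' has only one simple pole, so it cannot cancel a $Q$ of degree $\ge 2$ either. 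But the zeros of $Q$ are precisely the points where (i) has content: away from them, $C_\ell(X_0)\neq 0$ is equivalent to the easy non-vanishing of some $Z$ at $s_0$. What is actually required is test data whose zeta integral attains each pole of $L_{\As}$ to full order, and this is the substance of the gcd theorem (Flicker, Kable, Matringe in the non-split case; Jacquet--Piatetski-Shapiro--Shalika in the split case), which you also cite. The proof is therefore salvageable by deleting the localisation construction and resting (i) entirely on those references---which is exactly what the paper does.
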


 \begin{proof}
  Since the character $\hat\nu_\ell$ of $\Q_\ell^\times$ is unramified, we may reduce to the case $\hat\nu_\ell = 1$ by replacing $s$ with $s + \alpha$ for any $\alpha \in \C$ such that $\ell^\alpha = \nu(\ell)$. Then \fix{our integral $Z(W_\ell, \Phi_\ell; \hat\nu_\ell, s)$ coincides with Flicker's Asai zeta integral $\Psi(s, W_\ell, \Phi_\ell)$ in the notation of \cite[Appendix]{flicker93}. Flicker \emph{defines} the Asai $L$-factor as the unique $L$-factor generating the fractional ideal of $\C[\ell^{\pm s}]$ formed by the values of this integral as $\Phi_\ell$ and $W_\ell$ vary, so the existence of the polynomials $C_\ell$, and the fact that they generate the unit ideal, are true by definition. It remains to check that this definition of the Asai $L$-factor coincides with the Langlands--Deligne $L$-factor, which is due to Matringe (see \cite[Remark 2.14]{loefflerwilliams18}). The unramified computation (ii) is carried out for $\GL_n$ in \cite[\S 3]{flicker88}; see Lemmas 2.6 and 3.13 of \cite{grossi19} for an alternative approach in our $\GL_2$ setting.}
 \end{proof}

\section{\texorpdfstring{$p$-adic Eisenstein measure and interpolation of period integrals}{p-adic Eisenstein measure and interpolation of period integrals}}

 \subsection{\texorpdfstring{$\p_1$-stabilisation}{p1-stabilisation}}

  We now assume $\Pi$ is ordinary at $\p_1$, and \fix{we choose a finite extension $E / \Qp$ containing the field of definition $L$ of $\Pi$ (via our isomorphism $\C \cong \overline{\Q}_p$)}. \fix{Let $\alpha_1, \beta_1$ be the parameters associated to $\Pi_{\p_1}$ as in \cref{not:alphabeta}, ordered so that $v_p(\alpha_1)=0$.}

  There is a natural pullback map
  \[
   {\rm pr}^\star: H^1\left(X_{G,\sph, E}, \myom_G^{(2-k_1,k_2, \fix{-w})}\right)
   \to H^1(X_{G,0}(\p_1)_E, \myom_G^{(2-k_1,k_2, \fix{-w})})
  \]
  We let
  \begin{equation}
   \label{eq:definup}
   \nu_{\Pi,\p_1}\coloneqq \left(1-\frac{\beta_1}{U_{\p_1}'}\right) \operatorname{pr}^\star(\nu_\Pi).
  \end{equation}
  \begin{longfix} This makes sense because the characteristic polynomial of $U_{\p_1}'$ acting on the Iwahori-invariants of $\Pi_{\p_1}$ is $(X - \alpha_1)(X-\beta_1)$, and $\alpha_1\beta_1 \ne 0$, so $U_{\p_1}'$ is invertible on this space.\end{longfix} 
  By definition \fix{$\nu_{\Pi,\p_1}$} lies in \\
  $e(U_{\p_1}') H^1\left(X_{G,0}(\p_1)_E, \myom_G^{(2-k_1,k_2, \fix{-w})}(-D_G)\right)[\Pif]$; and it corresponds \fix{via \eqref{eq:whittakermap} to the basis vector $W_{\p_1, \alpha_1} \cdot W_{\f}^{(\p_1), \new}$ of the ordinary $U_{\p_1}'$-eigenspace in $\Ww(\Pif)^{K_{\p_1, 1} \cdot K_1(\mathfrak{N})^{(\p_1)}}$, where $W_{\f}^{(\p_1), \new}$ is the new vector of $\Ww(\Pif^{(\p_1)})$ (normalised to have value 1 at the identity), and 
  \[ W_{\p_1, \alpha_1} \coloneqq \left(1-\frac{\beta_1}{U_{\p_1}'}\right) W_{\p_1, \sph} \in \Ww(\Pi_{\p_1}),\]
  where $W_{\p_1, \sph}$ is the normalised spherical vector of $\Ww(\Pi_{\p_1})$. (Here we normalise the action of $U_{\p_1}'$ on the Whittaker model to be compatible with \eqref{eq:whittakermap}.)}
  
  \fix{We shall need to extend this to a family of eigenvectors with deeper levels at $\p_1$ (compare \cite[\S 5.8]{padicLfnct} in the $\operatorname{GSp}_4$ case). For $n \ge 1$, we let $W_{\p_1, \alpha_1}[n]$ denote the unique $U_{\p_1}' = \alpha_1$ eigenvector in the $K_{\p_1, n}$-invariants of $\Ww(\Pi_{\p_1})$ whose image under the idempotent $\frac{1}{p^{r-1}} \sum_{\gamma \in K_{\p_1, 1} / K_{\p_1, n}} \gamma$ is $W_{\p_1, \alpha_1}$. Then the $W_{\p_1, \alpha_1}[n]$ are compatible under the normalised trace maps.}

 \subsection{Katz $p$-adic Eisenstein measure}
  
  Let us now fix the choice of a Schwartz function $\Phi^{(p)}\in \mathcal{S}((\Af^{(p)})^2,\C)$ at places \emph{away from $p$}, and $\chi^{(p)}$ a Dirichlet character of conductor coprime to $p$ and such that
  $\stbt{a}{0}{0}{a} \cdot \Phi^{(p)}=\hat{\chi}^{(p)}(a)^{-1} \Phi^{(p)}$ for $a \in(\hat{\Z}^{(p)})^{\times}$.
  
  Now let us consider a pair $\mu, \nu$ of Dirichlet characters of $p$-power conductor. We will now attach to such pair an element $\Phi_{p,\mu,\nu}\in \mathcal{S}(\Qp^2,\C)$. First recall that if $v$ is any place of $\Q$, $\phi\in \mathcal{S}(\Q_v,\C)$ is a Schwartz function in one variable, and $\psi_v$ is our fixed choice of additive character on $\Q_v$, the usual Fourier transform $\hat{\phi}$ of $\phi$ is given by
  \[ \widehat{\phi}(x)=\int_{\Q_v}\phi(y)\psi_v(yx)dy. \]
  For any character $\xi:\Qp^{\times}\to \C^{\times}$, let $\phi_\xi$ be the Schwartz function defined by $\phi_\xi(x)\coloneqq \ch_{\Z_p^{\times}}(x)\xi(x)$. We let
  \[
   \Phi_{p,\mu,\nu} (x,y)= \phi_{\mu}(x)\widehat{\phi}_{\nu}(y).
  \]
  We regard this as taking values in $\overline{\Q}_p$ via our fixed choice of isomorphism from $\C$.

  \begin{thm}\label{thm:eisensteinmeasure}
   Let $\Lambda'_E = \Lambda_E \mathop{\hat{\otimes}} \Lambda_E$, where $E$ is a $p$-adic field containing $L$, and let $(\kappa_1,\kappa_2)$ be the two canonical characters into $\Lambda'_E$. Then for each $\Phi^{(p)}$ taking values in $L$, there exists a two-variable measure 
   \[ \Ee^{\Phi^{(p)}}\left(\kappa_1,\kappa_2 ; \chi^{(p)}\right)\in H^0(\X_H^{\m},\fix{\myom_H^{\kappa}}(-D_H))\mathop{\hat{\otimes}}_{\Lambda, \kappa_1 + \kappa_2 + 1} \Lambda'_E \]
   such that its specialisation at $(a+\mu, b+ \nu)$ for any $a,b\ge 0$ is the $p$-adic modular form associated to the algebraic nearly-holomorphic modular form
   \[
    g \mapsto \widehat{\nu}(\det g)^{-1} \cdot 
    E^{\left(a+b+1, \Phi^{(p)}\Phi_{p, \mu, \nu}\right)}\left(g; \chi^{(p)}  \mu\nu^{-1}, \frac{b-a+1}{2}\right) 
    \in M_{a+b+1}^{\mathrm{nh}}.
   \]
  
   Moreover, let $\Lambda_E =\Z_p[[\Z_{p}^{\times} ]]\otimes E$ with canonical character \fix{$\sigma$}, and let $t \in \Z_{\ge 2}$ be even. Then for each $\Phi^{(p)}$ taking values in $L$, there exists an element
   \[
    \Ee_t^{\Phi^{(p)}}\left(\sigma ; \chi^{(p)}\right)\in H^0(\X_H^{\m},\myom_H^t(-D_H)) \mathop{\hat{\otimes}}_{\Zp} \Lambda_{\fix{E}}
   \]
   such that its specialisation at $s + \nu$ for any integer $s$ with $1 - \tfrac{t}{2} \le s \le \tfrac{t}{2}$ is the $p$-adic modular form associated to the algebraic nearly-holomorphic modular form
   \[
    g \mapsto \widehat{\nu}(\det g)^{-1} \cdot E^{\left(t, \Phi^{(p)}\Phi_{p, \nu^{-1}, \nu}\right)}\left(g; \chi^{(p)}  \nu^{-2}, s\right) \in M_{t}^{\mathrm{nh}}.
   \]
  \end{thm}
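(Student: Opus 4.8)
The plan is to build both families by the method of Katz's Eisenstein measure: compute the Fourier expansions at the cusps of $\X_H^{\m}$ of the algebraic nearly-holomorphic Eisenstein series appearing in the statement; apply the $p$-adic unit-root splitting $u_H$ of \cref{sec:nearly} to pass to their $p$-adic avatars; recognise the resulting $q$-expansions as the specialisations of a single formal $q$-expansion with coefficients in the Iwasawa algebra $\Lambda'_E$ (resp.\ $\Lambda_L$); and invoke the $q$-expansion principle for $p$-adic modular forms over the ordinary locus to promote this formal $q$-expansion to a genuine section of $\Omega_H^{\kappa}(-D_H)\mathop{\hat\otimes}\Lambda'_E$ (resp.\ $\myom_H^t(-D_H)\mathop{\hat\otimes}\Lambda_L$). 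Both parts of the theorem are proved by the identical argument; the one-variable statement is the case in which the weight $t$ is held fixed and only the cyclotomic direction varies, with Schwartz datum $\Phi_{p,\nu^{-1},\nu}$ in place of $\Phi_{p,\mu,\nu}$.

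First I would recall the classical Fourier expansion of $E^{(k,\Phi_{\f})}(-;\chi,s)$ obtained by unfolding the Siegel section \eqref{eq:siegelsec}: the coefficient of $q^n$ at a cusp is a finite sum over divisors $d\mid n$ of terms $\chi(d)\,d^{\bullet}(n/d)^{\bullet}$ with exponents determined by $(k,s)$ and twisted by the local values of $\Phi_{\f}$, together with a constant term built from a Dirichlet $L$-value. The decisive structural input is the shape of the local datum $\Phi_{p,\mu,\nu}(x,y)=\phi_{\mu}(x)\widehat{\phi}_{\nu}(y)$ of \eqref{eq:phip}. Because $\phi_{\mu}$ is supported on $\Z_p^{\times}$, so that $\phi_{\mu}(0)=0$ for every $\mu$ (including the trivial character), the local Siegel section and its image under the intertwining operator both vanish at the cusps lying in $\X_H^{\m}$ — the multiplicative cusps — whence $E^{(k,\Phi^{(p)}\Phi_{p,\mu,\nu})}(-;\chi,s)$ has vanishing constant term there; this is what places the eventual family in the \emph{cuspidal} sheaf $(-D_H)$ over the multiplicative locus. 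The same support condition forces the divisor $d$ in the slot governed by $\phi_{\mu}$ to be a $p$-adic unit, so that the associated power $d^{\bullet}$ extends to a $p$-adic analytic function of the parameters. Applying $u_H$ replaces the nearly-holomorphic class by its $p$-adic avatar over $\X_H^{\m}$, whose $q$-expansion, computed via the unit-root splitting, is again an explicit divisor sum of the above shape; the twist $\widehat{\nu}(\det g)^{-1}$ in the statement is present precisely to normalise the nebentypus and $p$-power level so that this avatar is a section of $\Omega_H^{\kappa}$ in the conventions of \cref{sec:nearly}.

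Next I would assemble the formal $q$-expansion $\sum_{n}c_n\,q^n$ with $c_n\in\Lambda'_E$ (resp.\ $\Lambda_L$): by the previous paragraph each $c_n$ is a finite $\Z$-linear combination of products $[d_1][d_2]$ of group-like elements of $\Z_p[[\Z_p^{\times}]]$, scaled by the fixed prime-to-$p$ local factors, hence an element of the Iwasawa algebra, and the constant term is $0$. One then checks, cusp by cusp, that the image of $c_n$ under the specialisation map $\Lambda'_E\to E$ attached to $(a+\mu,b+\nu)$ (resp.\ $\Lambda_L\to E$ attached to $s+\nu$) equals the $n$-th $q$-expansion coefficient of $u_H(g\mapsto\widehat{\nu}(\det g)^{-1}E^{(a+b+1,\,\Phi^{(p)}\Phi_{p,\mu,\nu})}(g;\chi^{(p)}\mu\nu^{-1},\tfrac{b-a+1}{2}))$, and likewise in the one-variable case. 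Granting this matching, Katz's $q$-expansion principle for $p$-adic modular forms over the ordinary locus (cf.\ the analogous step in \cite{padicLfnct}), valid with coefficients in any $p$-adically complete $\Lambda$-algebra, produces a unique section of $\Omega_H^{\kappa}\mathop{\hat\otimes}_{\Lambda}\Lambda'_E$ (resp.\ $\myom_H^t\mathop{\hat\otimes}_{\Lambda}\Lambda_L$) with the prescribed $q$-expansion, and the vanishing of its constant terms places it in the cuspidal subsheaf. This section is the sought-for $\E^{\Phi^{(p)}}(\kappa_1,\kappa_2;\chi^{(p)})$ (resp.\ $\E_t^{\Phi^{(p)}}(\kappa;\chi^{(p)})$), and its interpolation property is exactly the matching of specialisations just verified.

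The main obstacle is the combination of the first two steps, i.e.\ a clean and fully normalised computation of the local contribution at $p$. Concretely one must (i) pin down the action of $u_H$ on the Fourier expansion of the nearly-holomorphic Eisenstein series for an arbitrary $s$ in the critical range — equivalently, identify the canonical algebraic (de Rham) model of the class over $\X_H^{\m}$ and read off its $q$-expansion directly — and (ii) evaluate the local datum $\Phi_{p,\mu,\nu}$, including the Fourier transform $\widehat{\phi}_{\nu}$, with the exact normalisation that produces the characters $\mu,\nu$ and the Tate-twist shift $\tfrac{b-a+1}{2}$ of the statement. All of the delicate sign, twist and normalisation choices live in this local computation; once it is carried out, the passage to the Iwasawa algebra and the appeal to the $q$-expansion principle are formal, and the same computation simultaneously yields the one-variable assertion.
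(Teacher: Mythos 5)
Your plan is correct and is essentially the same argument the paper relies on: the paper's proof of the two-variable statement consists of citing \cite[Theorem 7.6]{padicLfnct}, which is itself a reformulation of Katz's Eisenstein measure and whose construction is exactly the $q$-expansion / unit-root-splitting / $q$-expansion-principle argument you outline (including the key role of $\phi_\mu$ being supported on $\Z_p^\times$). The one-variable case is likewise handled as you propose, by specialising along the line of fixed weight, explicitly $\E_t^{\Phi^{(p)}}(\kappa;\chi^{(p)}) \coloneqq \E^{\Phi^{(p)}}\bigl(\tfrac{t}{2}-\kappa,\ \kappa+\tfrac{t}{2}-1;\chi^{(p)}\bigr)$, which forces $\mu=\nu^{-1}$ and $s=\tfrac{b-a+1}{2}$ as in your reduction.
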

  
  \begin{proof} The first statement is just a consequence of \cite[Theorem 7.6]{padicLfnct}, which in turn is a reformulation of the existence of Katz' Eisenstein measure \fix{(originally introduced in \cite{katzpadic})}.
  The second statement follows from the first one, as the one-variable object we are looking for is obtained letting
  \[ \Ee^{\Phi^{(p)}}_t(\fix{\sigma}; \chi^{(p)}) \coloneqq
   \Ee^{\Phi^{(p)}}\left(\tfrac{t}{2} - \fix{\sigma}, \fix{\sigma} + \tfrac{t}{2} - 1 ; \chi^{(p)}\right).\qedhere
  \]
  \end{proof}
  
  \begin{rmk}
  In order to construct the one-variable $p$-adic $L$-function, we are going to consider the measure $\Ee_{k_1-k_2}^{\Phi^{(p)}}\left(\kappa ; \chi^{(p)}\right)$. The interpolation property tells us that, fixing the $p$-power conductor character $\nu$, we can get the Eisenstein series $ \widehat{\nu}(\operatorname{det} g)^{-1} \cdot \fix{E^{\left(k_1-k_2, \Phi^{(p)}\Phi_{p, \nu^{-1}, \nu}\right)}(g; \chi^{(p)}  \nu^{-2}, s)}$ for values of $s$ in the range
  \[
  1-\tfrac{k_1-k_2}{2}\le s\le \tfrac{(k_1-k_2)}{2}.
  \]
  Note this is exactly the range \eqref{eq:critrangeintro} from the introduction.
  \end{rmk}

\subsection{One-variable interpolation of zeta-integrals}
 \label{sect:onevarpadic}
 Let $\Pi$ be a cuspidal automorphic representation of $\GL_2(\A_F)$, of weight $(k_1, k_2)$ with $k_1 = k_2 \bmod 2$ and $k_1 > k_2 \ge 1$, and of level prime to $p$ (as in \cref{thmintro:cycloL}).

 We choose the following local data away from $p \infty$:
 \begin{itemize}
  \item a Whittaker function $W_\f^{(p)} \in W(\Pif^{(p)})$ which is defined over $L$;
  \item a Schwartz function $\Phi^{(p)}$ defined over $L$;
  \item a level group $K_G^{(p)}$ such that $K_G^{(p)}$ fixes $W_\f^{(p)}$ and $K^{(p)}_H \coloneqq K_G^{(p)} \cap H$ fixes $\Phi^{(p)}$.
 \end{itemize}

 \fix{We let $W_{p, \alpha_1} = W_{\p_1, \alpha_1} \cdot W_{\p_2, \sph} \in \Ww(\Pi_p)$, where $W_{\p_2, \sph}$ is the normalised spherical vector of $\Ww(\Pi_{\p_2})$, and $W_{\p_1, \alpha_1}$ the $U_{\p_1}'$-eigenvector in $\Ww(\Pi_{\p_1})$ chosen above; and we let $W_{\f, \alpha_1} = W_\f^{(p)} \cdot W_{p,\alpha_1} \in \Ww(\Pif)$. We let} $\breve{\nu}_{\Pi, \alpha}$ be its image under the map \eqref{eq:whittakermap}. \fix{If we define $\breve{\nu}_{\Pi, \alpha}[n]$ for $n \ge 1$ analogously using $W_{\p_1, \alpha_1}[n]$, then the classes $\left(\breve{\nu}_{\Pi, \alpha}[n]\right)_{n \ge 1}$ are a trace-compatible system as in \cref{defi:etan}.}

 \begin{defi}
  We define
  \[
   \mathcal{Z}_{\Lambda}\left(W_\f^{(p)}, \Phi^{(p)}; \fix{\sigma}\right) = \operatorname{Vol}(K^{(p)}_H) \cdot \left\langle \breve{\nu}_{\Pi, \alpha}, \fix{\iota_{p, \star}}\left(\Ee_{k_1 - k_2}^{\Phi^{(p)}}\fix{(\sigma; \chi^{(p)})}\right)\right\rangle_{\X_{G, 0}(\p_1)^{1-\m}} \in \Lambda_E.
  \]
 \end{defi}

 Here the cup-product is taken in the cohomology of the Shimura variety of prime-to-$p$ level $K_G^{(p)}$; the volume factor makes the product independent of the choice of level group. 
 
 \begin{prop}
 \fix{For any locally-algebraic character $s + \rho$, with $s$ in the critical range, we have
 \begin{equation}
  \label{eq:globalint}
  \mathcal{Z}_{\Lambda}\left(W_\f^{(p)}, \Phi^{(p)}; s + \rho\right) =
  \frac{(p + 1)}{\Omega_\infty(\Pi)} Z\left(W_{\f, \alpha_1}[n]\cdot W_\infty^{\mathrm{ah}, 1}, \Phi^{(p)} \Phi_{p, \rho^{-1}, \rho} \Phi_\infty^{(k_1 - k_2)}; \hat\rho^{-1}, s\right).
 \end{equation}
 for all $n \gg 0$ (depending on $\rho$), where $Z(\dots)$ is the global zeta integral of \cref{def:globalzeta}.}
 \end{prop}

 \begin{proof}
  \fix{
  For $s$ in the critical range, the specialization of the Katz Eisenstein measure $\Ee_{k_1 - k_2}^{\Phi^{(p)}}$ at $s + \rho$ is the image under the unit-root splitting of a classical nearly-holomorphic form $\mathcal{G}_{s, \rho}$, whose level at $\p_1$ is of the form $K_{\p_1, n}$ for some\footnote{It suffices to take $n = \max(1, 2m)$ if $\rho$ has conductor $p^m$, but the exact value does not matter here.} $n \ge 1$.}

  \begin{longfix}
  Using the compatibility of the higher Hida theory cup-product with classical cup-product pairings at higher level, as in the diagram \eqref{eq:deeperpushforward}, we have the equality
  \begin{align*} 
   \mathcal{Z}_{\Lambda}\left(W_\f^{(p)}, \Phi^{(p)}; s + \rho\right) &=
   p^{1 - n} \operatorname{Vol}(K_H^{(p)}) \cdot \left\langle \breve{\nu}_{\Pi, \alpha}[n], \iota_* (\mathcal{G}_{s, \rho})\right\rangle_{X_{G, 0}(\p_1^n)} \\
   &= p^{1 - n} \operatorname{Vol}(K_H^{(p)}) \cdot \left\langle \iota^*\left(\breve{\nu}_{\Pi, \alpha}[n]\right), \mathcal{G}_{s, \rho}\right\rangle_{X_{H, 0}(p^n)}
  \end{align*}
  using the adjointness of pushforward and pullback.

  We now calculate this cup-product using the isomorphism between coherent cohomology of toroidal compactifications and spaces of harmonic differential forms, following a now-standard technique originally introduced by Harris \cite{harrishilbert}. Under these isomorphisms the cup-product corresponds to integration over $X_{H, 0}(p^n)(\C)$. If we identify this space with a quotient of the adelic symmetric space $[H] = Z_H(\A) H(\Q) \backslash H(\A)$, and use \cref{thm:eisensteinmeasure} to relate $\mathcal{G}_{s, \rho}$ to Jacquet's adelic Eisenstein series, then the resulting integral is exactly $(p + 1) Z(\breve{\phi}_{\Pi, \alpha}, \Phi; \hat\rho^{-1}, s)$, where $\breve{\phi}_{\Pi, \alpha}[n] \in \Pi$ is the image of $\breve{\nu}_{\Pi, \alpha}[n]$ under Harris' comparison map, and $\Phi = \Phi^{(p)} \cdot \Phi_{p, \rho^{-1}, \rho} \cdot \Phi_\infty^{(k_1 - k_2)}$. (The factor $(p + 1)$ arises from a comparison of volumes, since $\operatorname{Vol}(K_{H, 0}(p^n)) = \frac{1}{p^{n-1}(p + 1)}\operatorname{Vol}(K_H^{(p)})$.)

  We now note that the Whittaker function associated to $\breve{\phi}_{\Pi, \alpha}[n]$ is exactly $\frac{1}{\Omega_\infty(\Pi)} W_{\f, \alpha_1}[n]\cdot W_\infty^{\mathrm{ah}, 1}$, by the definition of the Whittaker period $\Omega_\infty(\Pi)$.
  \end{longfix}
 \end{proof}

 \begin{longfix}
  \begin{cor}\label{cor:zetaint}
   If $W_{\f}^{(p)}$ factors as a product of local Schwartz functions $W_\ell \in \Ww(\Pi_\ell)$, then for $n \gg 0$ we have
   \begin{multline*}
    \mathcal{Z}_{\Lambda}\left(W_\f^{(p)}, \Phi^{(p)}; s + \rho\right) =(p+1) \frac{L_{\mathrm{As}}(\Pi, \rho^{-1}, s)}{\Omega_\infty(\Pi)} \times \\ 
    \frac{Z(W_{p, \alpha_1}[n], \Phi_{p, \rho^{-1}, \rho}; \hat\rho_p^{-1}, s)}{L_{\As}(\Pi_p, \rho_p^{-1}, s)} \cdot Z(W_\infty; \Phi_\infty^{k_1 - k_2}, s) \cdot \prod_{\ell \ne p} C_\ell(W_\ell, \Phi_\ell; \ell^{-s} \rho(\ell)^{-1}),
   \end{multline*}
   where the local terms $C_\ell$ are as in \cref{prop:localAsai} (and all but finitely many of these are 1).
  \end{cor}

  \begin{proof}
   This follows from \eqref{eq:globalint} by expanding $Z\left(W_{\f, \alpha_1}[n]\cdot W_\infty^{\mathrm{ah}, 1}, \Phi^{(p)} \Phi_{p, \rho^{-1}, \rho} \Phi_\infty^{(k_1 - k_2)}; \rho, s\right)$ as a product of local zeta integrals, and writing the local integral at each finite place as the product of the $L$-factor and a correction term as in \cref{prop:localAsai}.
  \end{proof}
  \end{longfix}

 We shall consider the following two inputs to $\mathcal{Z}_\Lambda$:

 \subsubsection{Primitive test data}

  \begin{longfix}
  From part (i) of \cref{prop:localAsai} we know that for each $\ell \ne p$, the ideal of $\C[X, X^{-1}]$ generated by the local terms $C_\ell(\dots)$, as $W_\ell$ and $\Phi_\ell$ vary, is 1. So for each such $\ell$ we may find a collection of local data $W_{\ell, i}, \Phi_{\ell, i}$ and $a_{\ell, i} \in \C[X, X^{-1}]$, indexed by $i$ in some finite set $I_\ell$,  such that $\sum_{i \in I_\ell} a_{\ell, i}(X) C_\ell(W_{\ell, i}, \Phi_{\ell, i}; X) = 1$. 
  
  We now descend from $\C$ to the coefficient field $L$. From the definition of the local zeta-integrals, one checks easily that if $W_\ell$ is defined over $L$, and $\Phi_\ell$ takes values in $L$, then $C_\ell(W_\ell, \Phi_\ell; X)$ lies in $L[X, X^{-1}]$. (Compare \S 8.3.1 of \cite{padicLfnct}.) The twist by $\|\det\|^{-w/2}$ is harmless since the zeta integral only depends on the restriction of $W_\ell$ to $\GL_2(\Q_\ell) \subset \GL_2(F \otimes \Q_\ell)$, and on this subgroup $\|\det\|^{-w/2}$ takes values in $\Q$. Hence we can choose local test data $(W_{\ell, i}, \Phi_{\ell, i}, a_{\ell, i})_{i \in I_\ell}$ \emph{defined over $L$} such that $\sum_{i \in I_\ell} a_{\ell, i}(X) C_\ell(W_{\ell, i}, \Phi_{\ell, i}; X) = 1$. Of course, for all but finitely many primes we can (and do) take $I_\ell = \{1\}$, $a_{\ell, 1} = 1$ and $W_{\ell, 1}$ and $\Phi_{\ell, 1}$ the normalised spherical data. 

  We let $I = \prod_{\ell \ne p} I_\ell$ (a finite set, since all but finitely many of the $I_\ell$ are singletons); and for $i = (i_\ell)_{\ell} \in I$ we set $W_i^{(p)} = \prod_\ell W_{\ell, i_\ell} \in \Ww(\Pif^{(p)})_L$, and similarly $\Phi_i^{(p)}$, so we have measures $\mathcal{Z}_{\Lambda}(W_i^{(p)}, \Phi_i^{(p)}; \sigma) \in \Lambda_E$. We set $a_i(\sigma) = \prod_\ell a_{\ell, i_\ell}(\ell^{-\sigma}) \in \Lambda_E$, where $\sigma$ is the universal character $\Z_p^\times \to \Lambda_E^\times$ as above, so that evaluating $a_i$ at $s + \rho$ gives $\prod_\ell a_{\ell, i_\ell}(\ell^{-s} \rho(\ell)^{-1})$. Hence the element
  \[ \sum_{i \in I} a_i(\sigma) \cdot \mathcal{Z}_{\Lambda}(W_i^{(p)}, \Phi_i^{(p)}; \sigma) \in \Lambda_E \]
  has the property that its value at $s + \rho$, for all critical $s$ and finite-order $\rho$, is given by
  \[ (p+1) \frac{L_{\mathrm{As}}(\Pi \times \rho^{-1}, s)}{\Omega_\infty(\Pi)} \cdot \frac{Z(W_{p, \alpha_1}[n], \Phi_{p, \rho^{-1}, \rho}; \rho_p, s)}{L_{\As}(\Pi_p \times \rho_p^{-1}, s)} \cdot Z(W_\infty; \Phi_\infty^{(k_1 - k_2)}, s) \]
  for $n \gg 0$ depending on $\rho$.
  \end{longfix}

  \begin{defi}\label{defi:onevariable}
   We define $\mathcal{L}_{p, \mathrm{As}}(\Pi)$ \fix{by the formula
   \begin{equation}
    \label{eq:sillyfactor}
   \mathcal{L}_{p, \mathrm{As}}(\Pi)(\sigma) = \tfrac{1}{p} \cdot (\sqrt{D})^{1-(k_1 + k_2)/2 - \sigma}(-1)^\sigma \cdot \sum_{i \in I} a_i(\sigma) \cdot \mathcal{Z}_{\Lambda}(W_i^{(p)}, \Phi_i^{(p)}; \sigma) \in \Lambda_E. \end{equation}}
  \end{defi}

  We shall evaluate the local factors at $p$ and $\infty$ below, and this will give the interpolation property of \cref{thmintro:cycloL}. The \fix{extra terms multiplying the sum in} \eqref{eq:sillyfactor} will serve to cancel out various unwanted terms arising from the local zeta-integrals.

 \subsubsection{Imprimitive test data}
  \label{sect:imprimtest}
  Since we do not know how the primitive test data behave as we vary $\Pi$ over the specialisations of a $p$-adic family, we will also work with a less optimal, but explicit, alternative set of data. These ``imprimitive test data'' are the inputs $W_{\f}^{(p)}$ and $\Phi^{(p)}$ given by the following recipe, where $N$ is the integer generating $\mathfrak{N} \cap \Z$.
 \begin{itemize}
  \item $W_{\f}^{(p)}$ is the normalised new-vector,
  \item $\Phi^{(p)} = \prod_{\ell \ne p} \Phi_\ell$, where $\Phi_\ell = \ch(\Z_\ell^2)$ if $\ell \nmid N$, and if $v_\ell(N) = r > 0$, we define
  \[ \Phi_\ell = (\ell^2 - 1) \ell^{2r - 2} \cdot \ch( \ell^r \Z_\ell, 1 + \ell^r \Z_\ell).\]
 \end{itemize}
 Thus we may take $K^{(p)}_G = K_{G, 1}(\mathfrak{N})^{(p)}$. The imprimitive (one-variable) $L$-function $\mathcal{L}^{\imp}_{p, \As}(\Pi)$ is defined as the zeta-integral $\mathcal{Z}_{\Lambda}\left(W_\f^{(p)}, \Phi^{(p)}; \fix{\sigma}\right)$ for this choice of data, multiplied by the correction factor \fix{$\tfrac{1}{p} \cdot (\sqrt{D})^{1-(k_1 + k_2)/2 - \sigma}(-1)^\sigma$} as before.

\subsection{Two-variable interpolation}

 We now consider the case of families. Given a $\p_1$-ordinary family \fix{$\underline{\Pi}$} of level $\mathfrak{N}$ over some affinoid $\Uu$, as above, \fix{we clearly obtain a dual family $\underline{\Pi}^\vee$ by twisting by the inverse of the nebentype, whose specialisations are the duals of the specialisations of $\underline{\Pi}$. So, after shrinking $\Uu$ if necessary, \cref{thm:families} gives a free rank 1 direct summand 
 \[ H^1(k_1 + 2\kappa_{\Uu}, 2-k_2, w)[\underline{\Pi}^\vee]\ \subseteq H^1(M^\bullet_G(k_1 + 2\kappa_\Uu, 2-k_2, w))\]
 associated to $\underline{\Pi}^\vee$.} We choose an arbitrary \fix{basis vector $\nu_{\underline{\Pi}}$ of the $\Oo(\Uu)$-dual of the free rank one $\Oo(\Uu)$-module $H^1(k_1 + 2\kappa_{\Uu}, 2-k_2, w)[\underline{\Pi}^\vee]$.}
 
 \fix{Via Serre duality, we can interpret t}he specialisation of $\nu_{\underline{\Pi}}$ at an integer weight $a$ \fix{as} a basis of the \fix{$U_{\p_1}'$-ordinary stabilization of the} $\Pi[a]$-eigenspace in cohomology over our $p$-adic field $E$, but it may not necessarily descend to a number field. So we choose an algebraic basis $\nu_{\Pi[a]}$ \fix{of the prime-to-$p$-level new eigenspace, as in \cref{sect:periods},} defined over the coefficient field $L_a$ of $\Pi[a]$ (which is a subfield of $E$). \fix{By \eqref{eq:definup} we obtain a basis $\nu_{\Pi[a], \p_1}$ of the corresponding $U_{\p_1}'$-ordinary eigenspace, which must be a scalar multiple of $(\nu_{\underline{\Pi}})_{|a}$.} This defines a \emph{pair} of periods
 \[ \Omega_p(\Pi[a]) \in E^\times \qquad\text{and}\qquad \Omega_\infty(\Pi[a]) \in \C^\times \]
 with
 \[ (\nu_{\underline{\Pi}})_{|a} = \Omega_p(\Pi[a]) \cdot \nu_{\Pi[a], \p_1}, \qquad \phi_{\new, \Pi[a]}^{\mathrm{ah}, 1} = \Omega_\infty(\Pi[a]) \cdot \nu_{\Pi[a]}.\]

 \begin{defi}
  We let the two-variable measure $\mathcal{L}_{p, \mathrm{As}}^{\imp}(\underline{\Pi})\in \Oo(\Uu) \mathop{\hat\otimes} \Lambda_E$ be defined by
  \[
   \mathcal{L}_{p, \mathrm{As}}^{\imp}(\underline{\Pi})(\kappa, \sigma) = (\star) \cdot
   \left\langle \nu_{\underline{\Pi}}, \fix{\iota_{p,\star}^{\kappa}}\left(\Ee^{\Phi^{(p)}}(\tfrac{k_1-k_2}{2} + \fix{\kappa_{\Uu}} - \sigma, \sigma + \fix{\kappa_{\Uu}} + \tfrac{k_1-k_2}{2} - 1; \chi^{(p)})\right)\right\rangle,
  \]
  where \fix{$\iota_{p,\star}^{\kappa}$ was defined in \eqref{eq:Igusapushforward}, }$(\star)$ is the factor from \eqref{eq:sillyfactor} (with $k_1 + 2\kappa$ in place of $k_1$), $\chi^{(p)}$ is the common central character of all the $\Pi[a]$, and $\Phi^{(p)}$ is determined by $\mathfrak{N}$ as above. \fix{Here $\langle \nu_{\underline{\Pi}}, -\rangle$ denotes the composition of projection to $H^1_{\m, \varnothing}(k_1 + 2\kappa_{\Uu}, 2-k_2, w)[\underline{\Pi}^\vee] \otimes_{\Oo(\Uu)} \left(\Oo(\Uu) \mathop{\hat\otimes} \Lambda_E\right)$ and applying the linear functional $\nu_{\underline{\Pi}}$.}
 \end{defi}

 \fix{At an integer $a \in \mathcal{U} \cap \Z_{\ge 0}$, the map $\iota_{p,\star}^{\kappa}$ specialises to the $p$-adic pushforward $\iota_{p, \star}$, and the two-parameter Eisenstein series in the definition of $\mathcal{L}_{p, \mathrm{As}}^{\imp}(\underline{\Pi})$ specialises to the one-parameter Eisenstein series used to define $\mathcal{L}^{\imp}_{p, \As}(\Pi[a])$. On the other hand, $\nu_{\underline{\Pi}}$ specialises to $\Omega_p(\Pi[a]) \cdot \nu_{\Pi[a], \p_1}$. Hence} we have the specialisation formula
 \[ \mathcal{L}_{p, \As}^{\imp}(\underline{\Pi})_{|\kappa_\mathcal{U} = a} = \Omega_p(\Pi[a]) \cdot \mathcal{L}_{p, \As}^{\imp}(\Pi[a]).\]
 Combining this result with \cref{thm:families} (which gives the existence of a family through any given $\p_1$-ordinary $\Pi$) gives \cref{thmintro:twovar}.

\section{Evaluating local zeta-integrals}
\label{sec:zetasec}

We now compute the classical specialisations of the measure defined above and prove the interpolation property of Theorem \ref{thmintro:cycloL}.

\subsection{Non-archimedean case: setup}
Let $K$ be a nonarchimedean local field, of resid\fix{u}e characteristic $\ell$, and $L$ a finite \'etale $K$-algebra of degree 2. Thus $L$ is either $K \times K$, or a quadratic field extension. (In our applications, $K$ will be $\Q_\ell$, and $L$ will be $F \otimes_{\Q} \Q_\ell$ for a real quadratic field $F$.) We write $q_K$, resp.~$q_L$, for the orders of the residue fields of $K$ and $L$ (understood as $q_K = q_L$ if $L = K \times K$).

  We fix a choice of nontrivial character $\psi : L \to \C^\times$; we can and do assume that $\psi$ is trivial on $K \subset L$, and that $\psi$ is unramified (i.e.~it is trivial on $\Oo_L$ but not on any larger fractional ideal).

  We set $H = \GL_2(K)$, and $G = \GL_2(L)$, so $H \subset G$. Write $B_H, N_H, Z_H$ etc for the usual subgroups of $H$, and similarly for $G$.

 \subsubsection{Godement--Siegel sections}

  For $\chi$ a smooth character of $K^\times$, and $\Phi \in \Ss(K^2, \C)$ and $h \in H$, recall
  \[
  f^{\Phi}(h; \chi, s) = |\det h|_K^s \int_{K^\times} \Phi( (0, a) h) \chi(a)|a|_K^{2s}\, \mathrm{d}^\times a.
  \]
  This defines a meromorphic section of the family of principal series representations $I(|\cdot|_K^{s-1/2}, |\cdot|_K^{1/2 - s} \chi^{-1})$ of $H$.

 \subsubsection{Whittaker models, new vector theory}

  Let $\pi$ be a generic irreducible smooth representation of $G$, which we suppose to be unitary and tempered. Then we have a Whittaker model $\Ww(\pi, \psi)$ (we shall omit $\psi$ henceforth).

  \begin{thm}[Casselman]\label{thm:casselman}
   There exists an ideal $\mathfrak{c} = \mathfrak{c}(\pi)$ of $\Oo_L$ with the following property: for $\mathfrak{a}$ any ideal of $\Oo_L$, the invariants of $\pi$ under the subgroup $K_1(\mathfrak{a}) \coloneqq \{ g \in \GL_2(\Oo_L) : g = \stbt{*}{*}{0}{1} \bmod \mathfrak{a}\}$ are nonzero iff $\mathfrak{c} \mid \mathfrak{a}$. The invariants at level $K_1(\mathfrak{c})$ are one-dimensional, and in the Whittaker model $\Ww(\pi, \psi)$, there is a unique generator $W^{\new}$ of this subspace satisfying $W^{\new}(1) = 1$.
  \end{thm}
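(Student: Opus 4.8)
The plan is to deduce the theorem from the Jacquet--Langlands theory of local zeta integrals and $\varepsilon$-factors for $\GL_2$, following Casselman's original argument. Since $L$ is étale of degree $2$ over $K$, the ring $\Oo_L$ is a product of at most two discrete valuation rings; in the split case $L \cong K \times K$ one has $G = \GL_2(K) \times \GL_2(K)$, $\pi = \pi_1 \boxtimes \pi_2$, and $K_1(\mathfrak{a}_1 \times \mathfrak{a}_2) = K_1(\mathfrak{a}_1) \times K_1(\mathfrak{a}_2)$, so the statement follows by tensoring the statements for $\pi_1$ and $\pi_2$, with $\mathfrak{c}(\pi) = \mathfrak{c}(\pi_1)\,\mathfrak{c}(\pi_2)$. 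Hence I may assume $L$ is a nonarchimedean local field, write $\varpi$ for a uniformiser and $\mathfrak{p} = \varpi\Oo_L$, and recall the local theory: for $W$ in the Whittaker model $\cW(\pi,\psi)$ the integral $\Psi(s,W) = \int_{L^\times} W(\operatorname{diag}(a,1))\,|a|^{s-1/2}\,\mathrm{d}^\times a$ converges for $\re(s) \gg 0$ (temperedness of $\pi$ supplying the decay of $W$ on the torus), is a rational function of $q_L^{-s}$, and as $W$ varies spans the fractional ideal $L(s,\pi)\,\C[q_L^{\pm s}]$; the functional equation relating $\Psi(s,W)$ to $\Psi(1-s,\rho(w)\widetilde W)$ with $w = \stbt{0}{1}{-1}{0}$ produces $\gamma(s,\pi,\psi)$ and hence $\varepsilon(s,\pi,\psi)$. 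Since $\psi$ is unramified, $\varepsilon(s,\pi,\psi) = \varepsilon(\tfrac12,\pi,\psi)\,q_L^{-c(\pi)(s-1/2)}$ for a unique integer $c(\pi)\ge 0$, and I set $\mathfrak{c}(\pi) := \mathfrak{p}^{c(\pi)}$.

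Next I would show that $\mathfrak{c}(\pi)\mid\mathfrak{a}$ is \emph{necessary} for $\pi^{K_1(\mathfrak{a})}\neq 0$. Admissibility guarantees $\pi^{K_1(\mathfrak{p}^n)}$ is finite-dimensional and nonzero for $n$ large; suppose $0\neq W\in\cW(\pi,\psi)^{K_1(\mathfrak{p}^n)}$. Then $W(\operatorname{diag}(a,1))$ depends only on $v(a)$, so $\Psi(s,W)/L(s,\pi)$ is a Laurent polynomial in $q_L^{-s}$ whose range of exponents is controlled by $n$; conjugating $K_1(\mathfrak{p}^n)$ by the Atkin--Lehner element $\stbt{0}{1}{-\varpi^n}{0}$ and tracking how the congruence subgroup transforms gives the analogous statement for $\rho\bigl(\stbt{0}{1}{-\varpi^n}{0}\bigr)\widetilde W \in \cW(\widetilde\pi,\psi)^{K_1(\mathfrak{p}^n)}$, and feeding both into the functional equation forces $c(\pi)\le n$. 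This is a bookkeeping computation with the Iwasawa decomposition and the explicit action of the Borel in the Kirillov model, and is the ``soft'' half of the theorem.

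For the \emph{existence} of a vector fixed by $K_1(\mathfrak{p}^{c(\pi)})$ and, crucially, the fact that this space is exactly one-dimensional, I would go through the classification of generic irreducible $\pi$: irreducible principal series $\ind(\chi_1,\chi_2)$, twists $\mathrm{St}\otimes\chi$ of the Steinberg representation, and supercuspidals. In each case one exhibits an explicit candidate $W^{\new}$ --- a distinguished section in the induced model for principal series, the evident vector for Steinberg, and a translate of the characteristic function of a coset in the Kirillov model for supercuspidals --- checks that it is fixed by $K_1(\mathfrak{p}^{c(\pi)})$ with $c(\pi)$ agreeing with the $\varepsilon$-conductor read off from $\chi_1,\chi_2$ (resp.\ from the Langlands parameter), and establishes one-dimensionality by showing that any $K_1(\mathfrak{p}^{c(\pi)})$-fixed $W$ is, in the Kirillov model, a fixed scalar multiple of $\mathbf{1}_{\Oo_L^\times}$ on the units and is then determined everywhere by the constraint that $\Psi(s,-)/L(s,\pi)$ be the monomial in $q_L^{\pm s}$ dictated by the functional equation. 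Finally $W^{\new}(1)\ne 0$ --- otherwise $\Psi(s,W^{\new})$ could not equal $L(s,\pi)$ times a unit --- so one normalises $W^{\new}(1)=1$.

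The genuine obstacle is the one-dimensionality at level $\mathfrak{c}(\pi)$: the necessity of the divisibility and the mere existence of \emph{some} new vector are comparatively formal, whereas the assertion $\dim \pi^{K_1(\mathfrak{c}(\pi))}=1$ --- not merely bounded --- is the substantive content of Casselman's theorem and forces either the case-by-case analysis above or a uniform argument via the Kirillov model together with multiplicity-one for the family of zeta integrals. In the body of the paper one of course simply invokes Casselman and Jacquet--Langlands rather than reproving this.
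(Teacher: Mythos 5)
The paper offers no proof of this statement: it is quoted verbatim as Casselman's theorem and simply invoked, so there is no argument of the authors' to compare yours against. Your outline is a faithful sketch of the standard proof. The reduction to the field case via $L \cong K \times K$, $\pi = \pi_1 \boxtimes \pi_2$, $K_1(\mathfrak{a}_1 \times \mathfrak{a}_2) = K_1(\mathfrak{a}_1) \times K_1(\mathfrak{a}_2)$ is correct and is implicitly what the paper uses later (its formula for $W^{\new}$ on the torus in the split case is exactly the tensor product of new vectors). In the field case, defining $\mathfrak{c}(\pi)$ through the exponent of the $\varepsilon$-factor (legitimate since $\psi$ is unramified, so $c(\pi) \ge 0$ for generic $\pi$), proving necessity of $\mathfrak{c} \mid \mathfrak{a}$ by feeding a $K_1(\mathfrak{p}^n)$-fixed Whittaker function and its Atkin--Lehner transform into the local functional equation, and then establishing existence and one-dimensionality at the exact level, is the recognised route. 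You are also right that the one-dimensionality is the substantive content. One remark: Casselman's original argument for that step is not the case-by-case classification you propose but a uniform induction on the level using the restriction of the Kirillov model to the mirabolic subgroup; both routes are standard in the literature, and for the purposes of this paper either suffices since the result is only being cited.
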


  The values of $W^{\new}$ along the maximal torus can be extracted from the $\GL_2$ standard zeta integral:

  \begin{prop} \
   \begin{enumerate}
    \item Suppose $L$ is a field \fix{and $\varpi_L$ a uniformizer of $L$}. Then we have
    \[ \sum_{n \ge 0} W^{\new}\left( \stbt{\varpi_L^n}{0}{0}{1} \right) q_L^{(1/2 - s)n} = L^{\mathrm{std}}(\pi, s) \]
    where $L^{\mathrm{std}}(-, s)$ denotes the standard $L$-factor.
    \item Suppose $L = K \times K$ \fix{(so $\pi = \pi_1 \times \pi_2$ for representations $\pi_i$ of $\GL_2(K)$)}, and let $\varpi_1 = (\varpi_K, 1)$ and $\varpi_2 = (1, \varpi_K)$ be uniformizers of the two maximal ideals of $\Oo_L$. Then
    \[ \sum_{n_1, n_2 \ge 0} W^{\new}\left( \stbt{\varpi_1^{n_1} \varpi_2^{n_2}}{0}{0}{1} \right) q_K^{(1/2 - s)(n_1 + n_2)} = L^{\mathrm{std}}(\pi_1, s)L^{\mathrm{std}}(\pi_2, s). \]
   \end{enumerate}
  \end{prop}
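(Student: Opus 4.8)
The plan is to recognise each generating function as a Jacquet--Langlands local zeta integral evaluated on the new vector, and then to invoke the classical computation of that integral. First I would treat the case where $L$ is a field. Recall the standard $\GL_2(L)$ zeta integral $Z(s, W) = \int_{L^\times} W\left( \stbt{a}{0}{0}{1} \right) |a|_L^{s - 1/2}\, \mathrm{d}^\times a$ for $W \in \cW(\pi)$, with $\mathrm{d}^\times a$ normalised so that $\Oo_L^\times$ has volume $1$. Decomposing $L^\times = \coprod_{n \in \Z} \varpi_L^n \Oo_L^\times$, and observing that for every unit $u \in \Oo_L^\times$ the matrix $\stbt{u}{0}{0}{1}$ lies in $K_1(\mathfrak{c})$ so that $W^{\new}$ takes a single value on each orbit $\{ \stbt{a}{0}{0}{1} : a \in \varpi_L^n \Oo_L^\times \}$, one gets $Z(s, W^{\new}) = \sum_{n \in \Z} W^{\new}\left( \stbt{\varpi_L^n}{0}{0}{1} \right) q_L^{(1/2 - s)n}$, using $|\varpi_L^n|_L = q_L^{-n}$.

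Next I would show that the terms with $n < 0$ vanish, so that this sum is exactly the one in the statement. For $x \in \Oo_L$ the matrix $\stbt{1}{x}{0}{1}$ lies in $K_1(\mathfrak{c})$, hence $W^{\new}\left( \stbt{\varpi_L^n}{0}{0}{1} \stbt{1}{x}{0}{1} \right) = W^{\new}\left( \stbt{\varpi_L^n}{0}{0}{1} \right)$; on the other hand the Whittaker transformation law rewrites the left-hand side as $\psi(\varpi_L^n x)\, W^{\new}\left( \stbt{\varpi_L^n}{0}{0}{1} \right)$. When $n \le -1$ the set $\varpi_L^n \Oo_L = \mathfrak{p}_L^n$ strictly contains $\Oo_L$, and since $\psi$ is unramified it is nontrivial on it; choosing $x \in \Oo_L$ with $\psi(\varpi_L^n x) \ne 1$ forces $W^{\new}\left( \stbt{\varpi_L^n}{0}{0}{1} \right) = 0$. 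It then remains to quote the fundamental fact from the Jacquet--Langlands/Casselman theory of the new vector: the zeta integral of $W^{\new}$ realises the local $L$-factor on the nose, $Z(s, W^{\new}) = L^{\mathrm{std}}(\pi, s)$, and not merely up to a unit of $\C[q_L^s, q_L^{-s}]$. This is the one genuinely nontrivial input — the refinement of \cref{thm:casselman} identifying the standard $L$-factor with the new-vector zeta integral — and I would cite it rather than reprove it; it is also the main obstacle, in the sense that everything else is formal bookkeeping.

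Finally, for $L = K \times K$ we have $G = \GL_2(K) \times \GL_2(K)$ and $\pi = \pi_1 \boxtimes \pi_2$. Being trivial on the diagonally embedded $K$ and unramified, $\psi$ has the form $\psi(x,y) = \psi_0(x-y)$ for an unramified additive character $\psi_0$ of $K$, so $\cW(\pi, \psi) = \cW(\pi_1, \psi_0) \otimes \cW(\pi_2, \psi_0^{-1})$; moreover $\mathfrak{c}(\pi) = \mathfrak{c}(\pi_1) \times \mathfrak{c}(\pi_2)$ as ideals of $\Oo_L = \Oo_K \times \Oo_K$, whence $W^{\new} = W_1^{\new} \otimes W_2^{\new}$ with $W_i^{\new}(1) = 1$. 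Writing $\varpi_1^{n_1}\varpi_2^{n_2} = (\varpi_K^{n_1}, \varpi_K^{n_2})$, the value $W^{\new}\left( \stbt{\varpi_1^{n_1}\varpi_2^{n_2}}{0}{0}{1} \right)$ factors as $W_1^{\new}\left( \stbt{\varpi_K^{n_1}}{0}{0}{1} \right) W_2^{\new}\left( \stbt{\varpi_K^{n_2}}{0}{0}{1} \right)$, and since $q_L = q_K$ the double sum splits as a product of two copies of the generating function of part (1); by that case each factor equals $L^{\mathrm{std}}(\pi_i, s)$, giving $L^{\mathrm{std}}(\pi_1, s) L^{\mathrm{std}}(\pi_2, s)$ as claimed.
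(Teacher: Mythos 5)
The paper states this proposition without proof, treating it as a standard consequence of new-vector theory, and your argument is exactly the standard one: unfolding the local zeta integral over $\coprod_{n}\varpi_L^n\Oo_L^\times$ using right-invariance under $\stbt{u}{0}{0}{1}$, killing the $n<0$ terms via invariance under $\stbt{1}{x}{0}{1}$ together with the Whittaker transformation law and the unramifiedness of $\psi$, and factoring everything in the split case. The one genuinely non-formal input --- that the new-vector zeta integral equals $L^{\mathrm{std}}(\pi,s)$ on the nose rather than up to a unit of $\C[q_L^{\pm s}]$ --- is indeed a theorem of the Casselman/Jacquet--Langlands theory, and citing it rather than reproving it is appropriate here.
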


\subsection{The Asai zeta integral}

 \subsubsection{Definitions}
  \label{sect:zetadef}

  For $W \in \Ww(\pi)$ and $\Phi \in \Ss(K^2\fix{, \C})$, we define
  \[ Z(W, \Phi; s) \coloneqq \int_{Z_H N_H \backslash H} W(h) f^{\Phi}(h; \chi, s)\, \mathrm{d}h\]
  where $\chi = \omega_{\pi}|_{K^\times}$. This is well-defined for $\Re(s) \gg 0$ and has analytic continuation as a meromorphic function of $q_K^s$; as usual, we define \fix{$L_{\As}(\pi, s)$} as the unique $L$-factor which generates the fractional ideal of $\C[q_K^{\pm s}]$ given by the values of this integral as $W$ and $\Phi$ vary.

 \subsubsection{Torus integral}

  We define for $W \in \Ww(\pi)$ a ``partial zeta integral''
  \[ Z(W; s) = \int_{K^\times} W\left( \stbt x 0 0 1 \right) |x|_K^{s - 1}\, \mathrm{d}^\times x.\]
  Cf.~\cite[Definition 3.3.1]{myhida}. Again, the integral is convergent for $\Re(s) \gg 0$ and has analytic continuation as a meromorphic function of $q_K^s$.

  \begin{prop}
   The function \fix{on $H$ given by $h \mapsto Z(h W; s)$} transforms as an element of the induced representation $I(|\cdot|^{1/2 - s}, |\cdot|^{s - 1/2} \chi)$, i.e.~the dual of the representation in which the Siegel section lies; and we have
   \[ \int_{B_H \backslash H}  Z(hW; s) f^{\Phi}(h; \chi, s)\, \mathrm{d}h = Z(W, \Phi, s).\]
  \end{prop}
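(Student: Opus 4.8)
The plan is to prove both assertions by an explicit computation of the transformation law, followed by an unfolding (Fubini) argument; neither step is deep, and the only genuinely delicate point is the bookkeeping of modular characters.

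\emph{Transformation law.} I would compute $Z(bhW;s)$ for $b$ ranging over the subgroups $Z_H$, $N_H$ and $A = \{\stbt{t}{0}{0}{1}\}$ which together generate $B_H$. For $z = \stbt{z_0}{0}{0}{z_0} \in Z_H \subset Z_G$ one uses the central character, $W(zg) = \omega_\pi(z_0) W(g)$, together with $\omega_\pi|_{K^\times} = \chi$, so $Z$ picks up $\chi(z_0)$. For $n = \stbt{1}{u}{0}{1} \in N_H$ one uses the Whittaker relation $W(\stbt{1}{v}{0}{1}g) = \psi(v)W(g)$ \emph{together with the hypothesis that $\psi$ is trivial on $K \subset L$}: writing $\stbt{x}{0}{0}{1}\,n = \stbt{1}{xu}{0}{1}\stbt{x}{0}{0}{1}$ and noting $xu \in K$, the twisting factors $\psi(xu)$ are all $1$, so $h \mapsto Z(hW;s)$ is genuinely left $N_H$-invariant. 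For $t \in A\cap K^\times$ the substitution $x \mapsto xt$ in $\int_{K^\times} W(\stbt{x}{0}{0}{1}h)|x|_K^{s-1}\,\mathrm{d}^\times x$ yields a factor $|t|_K^{1-s}$. Assembling these gives
\[ Z\!\left(\stbt{a}{*}{0}{d}h\, W; s\right) = \chi(d)\,|a|_K^{1-s}\,|d|_K^{s-1}\, Z(hW;s), \]
which is exactly the transformation law of the normalised principal series $I(|\cdot|_K^{1/2-s},\,|\cdot|_K^{s-1/2}\chi)$, the contragredient of $I(|\cdot|_K^{s-1/2},\,|\cdot|_K^{1/2-s}\chi^{-1})$ in which $f^\Phi(\cdot;\chi,s)$ lies. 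In particular the product $Z(hW;s)\,f^\Phi(h;\chi,s)$ transforms under left translation by $\stbt{a}{*}{0}{d}$ through $|a/d|_K = \delta_{B_H}\!\left(\stbt{a}{*}{0}{d}\right)$, so the integral over $B_H\backslash H$ in the statement is well-defined.

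\emph{Basic identity.} I would unfold the definition $Z(W,\Phi;s) = \int_{Z_HN_H\backslash H} W(h)\,f^\Phi(h;\chi,s)\,\mathrm{d}h$ along the tower $Z_HN_H \subset B_H \subset H$, whose intermediate quotient $Z_HN_H\backslash B_H$ is the torus $A\cong K^\times$ realised by the matrices $\stbt{x}{0}{0}{1}$. The integrand is left $Z_HN_H$-invariant (the central character of $W$ cancels the $\chi^{-1}$ coming from $f^\Phi(\stbt{z_0}{0}{0}{z_0}\cdot;\chi,s)$, and $N_H$ acts trivially because $\psi|_K=1$), so the iterated integral is legitimate for $\Re(s)\gg 0$, where everything converges absolutely. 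In the measure decomposition for this tower the fibre $A$ carries the measure $\delta_{B_H}\!\left(\stbt{x}{0}{0}{1}\right)^{-1}\mathrm{d}^\times x = |x|_K^{-1}\,\mathrm{d}^\times x$ relative to the quotient measure on $B_H\backslash H$; combining this with $f^\Phi(\stbt{x}{0}{0}{1}h;\chi,s) = |x|_K^{s}\,f^\Phi(h;\chi,s)$ turns the fibre integral into
\[ \left(\int_{K^\times} W\!\left(\stbt{x}{0}{0}{1}h\right)|x|_K^{s-1}\,\mathrm{d}^\times x\right) f^\Phi(h;\chi,s) = Z(hW;s)\,f^\Phi(h;\chi,s), \]
which is precisely the asserted formula. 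Meromorphic continuation of both sides then removes the restriction $\Re(s)\gg 0$.

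I expect the one fiddly point to be pinning down the normalisations so that exactly one power of $|x|_K$ is absorbed into the fibre measure: that is, verifying that the modular factor relating the invariant measure on $Z_HN_H\backslash H$ to the product of measures on $B_H\backslash H$ and on $A$ is $\delta_{B_H}$ itself (not its inverse or square), and that the Haar measure on $K^\times$ used in the definition of $Z(W;s)$, as in \cite[Definition 3.3.1]{myhida}, is the compatible one. This is routine once one fixes a convention for the Haar measure on $H$.
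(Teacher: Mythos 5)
Your proposal is correct and follows exactly the paper's (very terse) argument: the paper simply says the identity follows by splitting the integral over $Z_H N_H \backslash H$ into an integral over $Z_H N_H \backslash B_H \cong K^\times$ followed by one over $B_H \backslash H$, which is precisely your unfolding, and your verification of the transformation law and the modular-character bookkeeping correctly fills in the details the paper leaves implicit.
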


  \begin{proof}
   This follows by splitting up the integral over $Z_H N_H \backslash H$ into an integral over $Z_H N_H \backslash B_H$ followed by an integral over $B_H \backslash H$.
  \end{proof}

  Note that it suffices to take the integral over $K_H = \GL_2(\Oo_K)$, since $B_H K_H = H$.

 \subsubsection{A special Schwartz function}
  \label{sect:specialSchwartz}
  Let $r \ge 1$, and consider the Schwartz function
  \[ \Phi_r = (q_K^2 - 1)q_K^{2r - 2} \cdot \ch( \p_K^r, 1 + \p_K^r) \]
  \fix{where $\mathfrak{p}_K$ is the maximal ideal of $\Oo_K$.}
  From \cite[Lemma 3.2.5]{GSP4}, if $\chi$ is trivial on $1 + \fix{\p_K^r}$, then $f^{\Phi_r}(-; \chi, s)$ vanishes outside the coset $B_H \backslash B_H K_0(\p_K^r)$ (the ``small cell'' modulo $\p^r$), and we have
  \[ f^{\Phi_r}(1; \chi, s) = (q_K + 1) q_K^{r - 1}, \]
  which is also the volume of the small cell for the unramified Haar measure on $B_H \backslash H$. (On the other hand, if $\chi$ is nontrivial on $1 + \p^r$ then $f^{\Phi_r}(-; \chi, s)$ is identically zero.)

  Since the small cell modulo $\p^r$ is the orbit of the identity under the subgroup $\stbt{1}{0}{\p^r}{1 + \p^r}$, we have the following: if $W \in \Ww(\pi)$ is invariant under this subgroup, the integrand $h \mapsto Z(hW; s) f^{\Phi}(h; \chi, s)$ is equal to $f^{\Phi_r}(1) Z(W, s)$ on the small cell in $(B_H \cap K_H) \backslash K_H$, and vanishes everywhere else on $K_H$. So for any $W$ invariant under this subgroup we have
  \[ Z(W, \Phi_r, s) = Z(W; s). \]

\subsection{Imprimitive Asai $L$-factors}\label{sec:imprimitiveasai}

  \begin{defi}
   We define the \emph{imprimitive Asai $L$-factor} by
   \[ L^{\imp}_{\As}(\pi, s) =
    \begin{cases}
    Z(W^{\new}, s) & \text{if $\pi$ is ramified}\\
    L(\chi, 2s) \cdot Z(W^{\new}, s) & \text{if $\pi$ is unramified}.
    \end{cases}
   \]
  \end{defi}

  (Note that ``unramified'' here means ``has invariants under $\GL_2(\Oo_L)$'', even when $L$ is a ramified extension.)

  \begin{prop}\label{prop:zetaintegralimprimitive}
   Let $r_0$ be such that $\varpi_K^{r_0} = \mathfrak{c} \cap \Oo_K$, where $\mathfrak{c} \trianglelefteqslant \Oo_L$ is the conductor of $\pi$. Then we have
   \[ L^{\imp}_{\As}(\pi, s) = Z(W^{\new}, \Phi_{r_0}, s). \]
   If $r_0 \ge 1$ then we have in fact $L^{\imp}_{\As}(\pi, s) = Z(W^{\new}, \Phi_r, s)$ for any $r \ge r_0$.
  \end{prop}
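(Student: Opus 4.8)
The plan is to split the argument into the ramified and unramified cases, i.e.\ $r_0 \ge 1$ versus $r_0 = 0$, reducing each to the material already assembled in \S\ref{sect:specialSchwartz} together with the torus-integral formula; the identity $Z(W,\Phi_{r_0},s)$ will be compared with the explicit Definitions of $L^{\imp}_{\As}(\pi,s)$.

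For the case $r_0 \ge 1$ (which is exactly the case where $\pi$ is ramified, since $r_0 \ge 1$ means $1 \notin \mathfrak{c}$), I would first observe that for every $r \ge r_0$ the subgroup $\left\{ \stbt{1}{0}{c}{d} : c \in \p_K^{r},\, d \in 1 + \p_K^{r}\right\} \subset H$ is carried, under the entrywise embedding $H = \GL_2(K) \hookrightarrow G = \GL_2(L)$, into the subgroup $K_1(\mathfrak{c})$ of $\GL_2(\Oo_L)$: indeed $\varpi_K^{r_0} \in \mathfrak{c}$ forces $\p_K^{r_0}\Oo_L \subseteq \mathfrak{c}$, so such a matrix lies in $\GL_2(\Oo_L)$, has unit determinant, and is $\equiv \stbt{1}{0}{0}{1} \bmod \mathfrak{c}$. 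Hence $W^{\new}$, being $K_1(\mathfrak{c})$-fixed by \cref{thm:casselman}, is fixed by this subgroup. Next I would check that $\chi = \omega_\pi|_{K^\times}$ is trivial on $1 + \p_K^{r}$ for every $r \ge r_0$: this is because the conductor of the central character $\omega_\pi$ divides the conductor $\mathfrak{c}$ of $\pi$ (a standard fact about $\GL_2$-representations) and $1 + \p_K^{r_0}\Oo_K \subseteq 1 + \mathfrak{c}$. These are precisely the hypotheses under which the discussion of \S\ref{sect:specialSchwartz} gives $Z(W^{\new}, \Phi_r, s) = Z(W^{\new}; s)$; and $Z(W^{\new}; s) = L^{\imp}_{\As}(\pi, s)$ by the definition of the imprimitive factor in the ramified case. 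This settles the first assertion for $r_0 \ge 1$ and the ``in fact for any $r \ge r_0$'' clause simultaneously.

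For the case $r_0 = 0$ (equivalently $\pi$ unramified, so $W^{\new}$ is the spherical vector), note $1 + \p_K^0 = \Oo_K$, so $\Phi_0 = (q_K^2 - 1)q_K^{-2} \cdot \ch(\Oo_K^2)$ is a scalar multiple of the spherical Schwartz function; here the computation of \S\ref{sect:specialSchwartz} does not apply and one argues directly. By the torus-integral formula, $Z(W^{\new}, \ch(\Oo_K^2); s) = \int_{B_H \backslash H} Z(hW^{\new};s)\, f^{\ch(\Oo_K^2)}(h;\chi,s)\, \mathrm{d}h$; since $W^{\new}$ is right-$\GL_2(\Oo_L)$-invariant and the section $f^{\ch(\Oo_K^2)}(-;\chi,s)$ is right-$\GL_2(\Oo_K)$-invariant (as $\ch(\Oo_K^2)$ is), the integrand is left-$B_H$-invariant and right-$K_H$-invariant, hence constant on $B_H \backslash H / K_H$, which is a single point because $B_H K_H = H$. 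Its value at the identity is $Z(W^{\new};s)\cdot f^{\ch(\Oo_K^2)}(1;\chi,s) = Z(W^{\new};s)\cdot L(\chi,2s)$; restoring the constant $(q_K^2-1)q_K^{-2}$ and the Haar-measure normalization on $B_H\backslash H$ fixed in \S\ref{sect:specialSchwartz} one finds $Z(W^{\new},\Phi_0;s) = L(\chi,2s)\cdot Z(W^{\new};s) = L^{\imp}_{\As}(\pi,s)$, the last equality being the definition in the unramified case.

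I expect the main obstacle to be the bookkeeping in this last step: one must make explicit the normalization of the Haar measure on $B_H\backslash H$ implicit in \S\ref{sect:specialSchwartz} in order to see that the constant $(q_K^2-1)q_K^{2r_0-2}$ appearing in $\Phi_{r_0}$ cancels the volume factor exactly at $r_0 = 0$, and one should double-check, using the formula for the spherical Whittaker function on the torus recalled before the statement (whose ``diagonal'' sum is a Cauchy-type product), that $Z(W^{\new};s) = L^{\As}(\pi,s)/L(\chi,2s)$, so that $L^{\imp}_{\As}(\pi,s) = L^{\As}(\pi,s)$ as one expects when $\pi$ is unramified. Both points are routine once the conventions are pinned down, and the ramified case — which is the one genuinely used in the interpolation argument — is entirely clean.
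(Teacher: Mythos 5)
Your argument is correct and follows essentially the same route as the paper: the $r_0 \ge 1$ case is exactly the application of \S\ref{sect:specialSchwartz}, once one checks (as you do) that $\stbt{1}{0}{c}{d}$ with $c \in \p_K^{r}$, $d \in 1+\p_K^{r}$ lands in $K_1(\mathfrak{c})$ because $\varpi_K^{r_0} \in \mathfrak{c}$, and that $\chi$ is trivial on $1+\p_K^{r_0}$ because the conductor of $\omega_\pi$ divides $\mathfrak{c}$; the $r_0 = 0$ case is the standard spherical computation carried out in the paper's ``Unramified case'' paragraph. The only caveat is the one you flag yourself: \S\ref{sect:specialSchwartz} defines $\Phi_r$ only for $r \ge 1$, and the intended $\Phi_0$ (cf.\ the imprimitive test data at primes $\ell \nmid N$) is simply $\ch(\Oo_K^2)$ without the prefactor $(q_K^2-1)q_K^{-2}$ — with that convention the identity $Z(W^{\new},\Phi_0,s) = L(\chi,2s)\,Z(W^{\new},s)$ holds with no residual volume constant.
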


 \subsubsection{Unramified case}

  Suppose $\pi$ has trivial conductor (but we allow $L$ to be a ramified extension). Then the new vector is simply the normalised spherical Whittaker function $W^\sph$; and the function $h \mapsto Z(h W^\sph, s)$ is invariant under $K_H$, so we want to take $r = 0$. In this case $f^{\Phi_r}(1; \chi, s) = L(\chi, s)$, so by a well-known explicit computation we find
  \[ L_{\mathrm{As}}^{\imp}(\pi, s)Z(W^\sph, \Phi_0, s) = L(\chi, s) Z(W^\sph, s) = L_{\mathrm{As}}(\pi, s). \]

  \begin{rmk}
   This computation is well-known if $L = K \times K$ or $L$ is an unramified quadratic extension, in which case $L_{\mathrm{As}}(\pi, s)$ is a degree 4 $L$-factor. However the computation is also valid for $L$ a ramified field extension, in which case $L_{\mathrm{As}}$ is a degree 3 $L$-factor (and coincidentally agrees with the symmetric square $L$-factor of $\pi$).
  \end{rmk}

 \subsubsection{Explicit formulae}\label{sec:explicit}

  We give explicit formulae for the imprimitive Asai $L$-factor defined above.

  \begin{prop}
   Suppose $L$ is a field extension (ramified or unramified), and $\pi$ has non-trivial conductor. Let $C$ be the multiset of nonzero complex numbers such that
   \[ L^{\mathrm{std}}(\pi, s) = \prod_{\alpha \in C} (1 - \alpha q_L^{-s})^{-1}.\]

   Then
   \[ L^{\imp}_{\mathrm{As}}(\pi, s) = \prod_{\alpha \in C} (1 - \alpha^{e_{L/K}} q_K^{-s})^{-1} \]
   where $e_{L/K}$ is the ramification index (1 or 2).\qed
  \end{prop}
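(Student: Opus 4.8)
The plan is to reduce $L^{\imp}_{\As}(\pi,s)$ to the one-variable torus integral $Z(W^{\new};s)$ and then evaluate the latter by feeding in the values of the new vector along the diagonal torus, which are controlled by the standard $\GL_2$ zeta integral over $L$. First I would recall from \cref{prop:zetaintegralimprimitive} and the computation of $f^{\Phi_r}(1;\chi,s)$ in \cref{sect:specialSchwartz} that $L^{\imp}_{\As}(\pi,s) = Z(W^{\new},\Phi_{r_0},s) = Z(W^{\new};s)$; the input here is that $W^{\new}$ is right-invariant under $\stbt{1}{0}{\p_K^{r_0}}{1+\p_K^{r_0}}$, which lies in $K_1(\mathfrak{c})$ because $\varpi_K^{r_0}\Oo_L\subseteq\mathfrak{c}$. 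So it remains to compute
\[
 Z(W^{\new};s) = \int_{K^\times} W^{\new}\stbt{x}{0}{0}{1}\,|x|_K^{s-1}\,\mathrm{d}^\times x,
\]
with $\mathrm{d}^\times x$ normalised to give $\Oo_K^\times$ volume $1$.

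Next I would analyse $W^{\new}\stbt{x}{0}{0}{1}$ for $x\in K^\times\subseteq L^\times$. Since $\stbt{u}{0}{0}{1}\in K_1(\mathfrak{c})$ for every $u\in\Oo_L^\times$, right $K_1(\mathfrak{c})$-invariance of $W^{\new}$ shows that this value depends only on $n:=v_L(x)$ and vanishes for $n<0$; moreover $v_L(x)=e_{L/K}\,v_K(x)$ for $x\in K^\times$. The standard zeta-integral formula for $W^{\new}$ recalled above, $\sum_{n\ge 0} W^{\new}\stbt{\varpi_L^n}{0}{0}{1}\,q_L^{(1/2-s)n} = L^{\mathrm{std}}(\pi,s) = \prod_{\alpha\in C}(1-\alpha q_L^{-s})^{-1}$, yields $W^{\new}\stbt{\varpi_L^n}{0}{0}{1} = q_L^{-n/2}h_n(C)$, where $h_n(C)$ is the coefficient of $X^n$ in $\prod_{\alpha\in C}(1-\alpha X)^{-1}$. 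Writing $x = (\text{unit})\cdot\varpi_L^{e_{L/K}m}$ with $m=v_K(x)\ge 0$, substituting, and using $q_L = q_K^{f_{L/K}}$ with $e_{L/K}f_{L/K}=2$ (so that the factor $q_L^{-e_{L/K}m/2}$ from the Whittaker value cancels against the factor $q_K^{m}$ from $|x|_K^{s-1}$), I obtain
\[
 Z(W^{\new};s) = \sum_{m\ge 0} h_{e_{L/K}m}(C)\, q_K^{-ms}.
\]

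Finally I would identify this series with the claimed Euler product. For $e_{L/K}=1$ this is immediate: $\sum_m h_m(C)q_K^{-ms} = \prod_{\alpha\in C}(1-\alpha q_K^{-s})^{-1}$. For $e_{L/K}=2$ I would invoke the representation-theoretic fact that, when $L$ is a field and $\pi$ has nontrivial conductor, $L^{\mathrm{std}}(\pi,\cdot)$ has degree at most $1$ — a degree-$2$ standard $L$-factor forces $\pi$ to be an irreducible principal series induced from two unramified characters, hence unramified — so that $|C|\le 1$; then $\sum_m h_{2m}(C)q_K^{-ms}$ is $1$ if $C=\varnothing$ and $\sum_m\alpha^{2m}q_K^{-ms}=(1-\alpha^2q_K^{-s})^{-1}$ if $C=\{\alpha\}$, in either case $\prod_{\alpha\in C}(1-\alpha^2 q_K^{-s})^{-1}$. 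The main obstacle is exactly this last step: the generating-function identity $\sum_m h_{2m}(C)Y^m = \prod_{\alpha\in C}(1-\alpha^2 Y)^{-1}$ is \emph{false} once $|C|\ge 2$, so the argument genuinely relies on the classification input that $|C|\le 1$ in the ramified-field, ramified-$\pi$ case; the rest is bookkeeping with the new vector and the measure normalisations.
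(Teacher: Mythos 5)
Your proof is correct; the paper states this proposition with no proof (the \textsc{qed} follows the statement), and your argument is exactly the routine computation the authors evidently have in mind. Two small remarks: the reduction to the torus integral is immediate from the definition of $L^{\imp}_{\As}$ in the ramified case, so the detour through $\Phi_{r_0}$ is unnecessary (though harmless); and you correctly unfold $Z(W^{\new};s)$ using the Casselman values $W^{\new}\stbt{\varpi_L^n}{0}{0}{1}=q_L^{-n/2}h_n(C)$ together with $e_{L/K}f_{L/K}=2$. Most importantly, you are right that in the ramified-extension case the generating-function identity $\sum_m h_{2m}(C)Y^m=\prod_{\alpha\in C}(1-\alpha^2Y)^{-1}$ fails for $|C|\ge 2$ and that one must invoke the classification fact that a representation of non-trivial conductor has standard $L$-factor of degree at most $1$; this hypothesis is precisely why the proposition excludes unramified $\pi$, and identifying it is the one genuinely non-mechanical point of the proof.
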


  \begin{prop}
   Suppose $L = K \times K$ (so again $q_K = q_L \eqqcolon q$), and $\pi = \pi_1 \times \pi_2$ is arbitrary (ramified or unramified). For $i = 1, 2$ let $C_i$ be the multiset of complex numbers such that $L^{\mathrm{std}}(\pi_i, s) = \prod_{\alpha_i \in C_i} (1 - \alpha_i q^{-s})^{-1}$. Then we have
   \[ L^{\imp}_{\mathrm{As}}(\pi, s) = \prod_{\alpha_1 \in C_1} \prod_{\alpha_2 \in C_2} (1 - \alpha_1 \alpha_2 q^{-s})^{-1}.\]
  \end{prop}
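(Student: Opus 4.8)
The plan is to reduce the identity to the one–dimensional torus integral $Z(W^{\new};s)$, which by the definition of $L^{\imp}_{\As}$ computes it up to the Dirichlet factor $L(\chi,2s)$ in the unramified case, and then to evaluate that integral by a generating–function computation. First I would note that in the split case $L=K\times K$ one has $G=\GL_2(K)\times\GL_2(K)$, the conductor factors as $\mathfrak{c}(\pi)=\mathfrak{c}(\pi_1)\times\mathfrak{c}(\pi_2)$, and therefore (by uniqueness of the new vector and the normalisation $W^{\new}(1)=1$) the new vector factors as $W^{\new}=W_1^{\new}\otimes W_2^{\new}$, with $W_i^{\new}$ the normalised new vector of $\pi_i$. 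Since $\stbt x 0 0 1$ with $x\in K^\times$ sits inside $G$ diagonally, this gives, writing $n=v_K(x)$ and $a_i(n)=W_i^{\new}\big(\stbt{\varpi_K^{n}}{0}{0}{1}\big)$,
\[ W^{\new}\left(\stbt x 0 0 1\right)=a_1(n)\,a_2(n).\]
By the standard $\GL_2(K)$ zeta–integral computation for the new vector (the computation underlying the preceding proposition) one has $\sum_{n\ge 0}a_i(n)\,q^{(1/2-s)n}=L^{\mathrm{std}}(\pi_i,s)=\sum_{n\ge 0}h_n(C_i)\,q^{-ns}$, where $h_n$ denotes the complete homogeneous symmetric function; comparing coefficients gives $a_i(n)=q^{-n/2}h_n(C_i)$, and in particular $a_i(n)=0$ for $n<0$.

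Next I would evaluate $Z(W^{\new};s)$. Using the right $\Oo_L^\times$–invariance of $W^{\new}$, the vanishing just noted (so the integrand is supported on $v_K(x)\ge 0$), and normalising $\mathrm{d}^\times x$ to give $\Oo_K^\times$ volume $1$, one gets
\[ Z(W^{\new};s)=\sum_{n\ge 0}W^{\new}\left(\stbt{\varpi_K^{n}}{0}{0}{1}\right)q^{-n(s-1)}=\sum_{n\ge 0}a_1(n)a_2(n)\,q^{n(1-s)}=\sum_{n\ge 0}h_n(C_1)h_n(C_2)\,q^{-ns}.\]
The remaining step is the symmetric–function identity: for multisets $C_1,C_2$ of cardinality $\le 2$ (padded to size $2$ with zeros if needed, which changes neither $h_n$ nor the denominator below),
\[ \sum_{n\ge 0}h_n(C_1)h_n(C_2)\,t^n=\frac{1-\big(\prod_{\alpha\in C_1}\alpha\big)\big(\prod_{\alpha\in C_2}\alpha\big)\,t^2}{\prod_{\alpha_1\in C_1}\prod_{\alpha_2\in C_2}(1-\alpha_1\alpha_2 t)},\]
which I would deduce from the Cauchy identity $\prod_{i,j}(1-x_iy_j t)^{-1}=\sum_\lambda t^{|\lambda|}s_\lambda(x)s_\lambda(y)$ for two pairs of variables, using $s_{(a,b)}(x_1,x_2)=(x_1x_2)^b h_{a-b}(x_1,x_2)$ to carry out the sum over the number of rows of $\lambda$. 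Note the numerator is identically $1$ unless both multisets have two nonzero entries, i.e.\ unless both $\pi_1$ and $\pi_2$ are unramified.

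Finally I would conclude by cases. If $\pi$ is ramified then some $\pi_i$ is ramified, so $L^{\mathrm{std}}(\pi_i,s)$ has degree $\le 1$, the padded product $\prod_{\alpha\in C_i}\alpha$ vanishes, and $L^{\imp}_{\As}(\pi,s)=Z(W^{\new};s)=\prod_{\alpha_1\in C_1}\prod_{\alpha_2\in C_2}(1-\alpha_1\alpha_2 q^{-s})^{-1}$. If $\pi$ is unramified then $W^{\new}=W^{\sph}$ and $\chi=\omega_\pi|_{K^\times}=\omega_{\pi_1}\omega_{\pi_2}$, so $\prod_{\alpha\in C_i}\alpha=\omega_{\pi_i}(\varpi_K)$ and the numerator above equals $1-\chi(\varpi_K)q^{-2s}=L(\chi,2s)^{-1}$; hence $L^{\imp}_{\As}(\pi,s)=L(\chi,2s)\cdot Z(W^{\sph};s)=\prod_{\alpha_1\in C_1}\prod_{\alpha_2\in C_2}(1-\alpha_1\alpha_2 q^{-s})^{-1}$, as claimed. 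The one step needing care is this final matching of the numerator $1-(\prod C_1)(\prod C_2)t^2$ with $L(\chi,2s)^{-1}$, which pins down the arithmetic normalisation of the Satake parameters ($\alpha_i\beta_i=\omega_{\pi_i}(\varpi_K)$); everything else is a direct manipulation of results already in place.
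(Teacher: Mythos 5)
Your argument is correct and is exactly the standard computation that the paper leaves implicit (this proposition is stated without proof): factor the new vector as $W_1^{\new}\otimes W_2^{\new}$, read off the torus values from the standard $\GL_2$ zeta integral, evaluate $Z(W^{\new};s)=\sum_n h_n(C_1)h_n(C_2)q^{-ns}$ via the two-variable Cauchy identity, and observe that the numerator $1-\bigl(\prod C_1\bigr)\bigl(\prod C_2\bigr)q^{-2s}$ is $1$ in the ramified case and equals $L(\chi,2s)^{-1}$ in the unramified case, matching the two branches of the definition of $L^{\imp}_{\As}$. No gaps; the only points needing the care you already give them are the padding of $C_i$ by zeros and the identification $\alpha_i\beta_i=\omega_{\pi_i}(\varpi_K)$.
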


 \subsubsection{Relation to the primitive Asai $L$-factor}

  In all cases there exists an $L$-factor $P(s)$ such that
  \[ L_{\mathrm{As}}(\pi, s) = P(s) \cdot L^{\imp}_{\mathrm{As}}(\pi, s).\]
  We have seen that if $\pi$ is unramified, then $P(s) = 1$. One checks similarly that if $L = K \times K$, then we have $P(s) = 1$ in any of the following cases:
  \begin{itemize}
   \item at least one of $\pi_1$, $\pi_2$ is unramified;
   \item one of the $\pi_i$ is supercuspidal, but the other is not.
  \end{itemize}

\subsection{Zeta integrals with $U'$-eigenvectors}
 \label{sec:zetaU'eigen}

  We now consider the local zeta integral \fix{for $\ell = p$, taking $K = \Qp$ and $L = F \otimes \Qp \cong \Qp \times \Qp$. We take the additive character of $L$ to be the restriction to $F \otimes \Qp$ of the global character $\psi_F : \A_F \to \C^\times$ of \cref{not:addchar}}. Since we want to consider twists by ramified characters, \fix{we shall consider the more general integral
  \[ Z(W, \Psi; \hat\nu_p^{-1}, s) \coloneqq \int_{\fix{(Z_H N_H \backslash H)(\Qp)}} W(h) \hat\nu_p^{-1}(\det h) f^{\Phi}(h; \chi \hat\nu_p^{-2}, s) \]
  where $\nu$ is a Dirichlet character.
  }

  \fix{We have $\pi = \pi_1 \times \pi_2$ for representations $\pi_i$ of $\GL_2(\Qp)$. If $\psi_p$ is the standard additive character of $\Qp$,} we can identify $\Ww(\pi_1, \psi_p) \otimes \Ww(\pi_2, \psi_p)$ with $\Ww(\pi, \psi_{F, p})$ via mapping $W_1 \otimes W_2$ to the function $g \mapsto W_1(\stbt{1/\sqrt{D}}{}{}{1} g_1) W_2(\stbt{-1/\sqrt{D}}{}{}{1} g_2)$ (so normalised new vectors go to normalised new vectors). A straightforward substitution gives
  \[
   Z(\fix{W_1 \otimes W_2}, \Phi; \fix{\hat\nu_p^{-1}}, s) = \hat\nu_p^{-1}(-\sqrt{D}) \cdot \int_{\fix{(Z_H N_H \backslash H)(\Qp)}} W_1(\stbt{-1}{}{}{1}g) W_2(g) \hat\nu_p^{-1}(\det g) f^{\Phi}(g; \chi \fix{\hat\nu_p}^{-2}, s)\, \mathrm{d}g,
  \]
  and the integral on the right-hand side is a Rankin--Selberg zeta integral. We are interested in the following situation:

  \begin{itemize}
   \item $\pi_1$ and $\pi_2$ are unramified;
   \item $\nu$ is a Dirichlet character of $p$-power conductor (possibly trivial), so $\hat\nu_p^{-1}(-\sqrt{D}) = \nu(-\sqrt{D})$;
   \item $\Phi = \Phi_{p, \nu^{-1}, \nu}$ as defined above;
   \item $W_2$ is the normalised spherical vector;
   \item $W_1$ is the $U_{\p_1}'$-eigenvector \fix{$W_{\p_1, \alpha_1}[n]$ defined above, for some $n \gg 0$.}
  \end{itemize}

  \begin{prop}
  \label{prop:localintp}
   With the above choices, we have
   \[ Z(\fix{W_1 \otimes W_2}, \Phi; \fix{\hat\nu_p^{-1}}, s) = \tfrac{p}{p+1} \nu(-\sqrt{D})\Ee_p(\As \Pi, \nu^{-1}, s) L_{\As}(\Pi_p \times \hat\nu_p^{-1}, s).\]
  \end{prop}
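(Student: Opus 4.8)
The plan is to start from the Rankin--Selberg expression displayed immediately before the statement, which identifies $Z(W,\Phi;\nu,s)$ with $\hat\nu_p^{-1}(-\sqrt{D})$ times a $\GL_2 \times \GL_2$ Rankin--Selberg zeta integral attached to the pair $(W_1, W_2)$ -- with $W_1 = (1 - \beta_1/U_{\p_1}^t)W_1^{\sph}$ and $W_2 = W_2^{\sph}$ -- and the Godement--Siegel section $f^{\Phi}(-;\chi\nu^{-2},s)$ with $\Phi = \Phi_{p,\nu^{-1},\nu}$. Since $p$ is coprime to the level we have $p \nmid D$, so the prefactor is simply $\nu(-\sqrt{D})$; the rest of the argument evaluates the remaining integral, which I would do by splitting into the two cases $\nu = 1$ and $\nu$ of conductor $p^r$ with $r \ge 1$.

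The first step is to analyse the Siegel section. Writing $\Phi_{p,\mu,\nu}(x,y) = \phi_\mu(x)\,\widehat{\phi}_\nu(y)$ and using the elementary Fourier-transform identities $\widehat{\ch_{\Zp^\times}} = \ch_{\Zp} - \tfrac{1}{p}\ch_{p^{-1}\Zp}$ when $\nu = 1$, and $\widehat{\phi}_\nu = \tfrac{G(\nu)}{p^r}\,\nu^{-1}\ch_{p^{-r}\Zp^\times}$ (up to the standard Gauss-sum normalisation) when $\nu$ has conductor $p^r$, I would express $f^{\Phi}(-;\chi\nu^{-2},s)$ as a $\mathbb{C}$-linear combination of the special sections $f^{\Phi_r}$ of \S\ref{sect:specialSchwartz} and their translates under the torus. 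Because the first coordinate is forced into $\Zp^\times$, this section is supported on a small cell modulo a power of $p$ (the ``big-cell'' direction), and the basic identity for Rankin--Selberg integrals then reduces the global integral to the partial zeta integral $Z(-;s)$ of $\stbt{-1}{}{}{1}\cdot W_1$ against $W_2$ over the torus, exactly as in the computation behind \cref{prop:zetaintegralimprimitive}. Since $\pi_1,\pi_2$ are unramified, the spherical Whittaker values along the torus are geometric, and summing the resulting geometric series via the explicit formula for $W^{\new}$ along the torus produces the Rankin--Selberg $L$-factor $L_{\As}(\Pi_p \times \hat\nu_p^{-1}, s)$ -- which is $1$ when $\nu$ is ramified.

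The remaining input is the $U_{\p_1}^t$-stabilisation. Here I would use that, on the relevant Iwahori-invariant Whittaker vectors, $U_{\p_1}^t$ acts on the torus integral $Z(-;s)$ by a shift of the torus variable, so that $(1 - \beta_1/U_{\p_1}^t)$ annihilates the $\beta_1$-eigencomponent of $W_1^{\sph}$ and multiplies the $L$-factor contribution by the two Euler factors of $\mathcal{E}_p(\As\Pi,\nu^{-1},s)$ carrying $\beta_1^\circ$; the other two factors, carrying $p^{s-1}/\alpha_1^\circ$, come out of the ``$\ch_{\Zp^\times}$ versus $\ch_{\Zp}$'' modification of the Siegel section, and, together with the volume of the small cell (which yields $\tfrac{p}{p+1}$ after the normalisation of \S\ref{sect:specialSchwartz}), this gives the claimed identity in the case $\nu = 1$. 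In the ramified case, $L_{\As}(\Pi_p \times \hat\nu_p^{-1}, s) = 1$, and the power-of-$\alpha_1^\circ$ value of the $U_{\p_1}^t$-eigenvector on the deep cell, together with the Gauss-sum normalisation of $\widehat{\phi}_\nu$, produces exactly $\mathcal{E}_p(\As\Pi,\nu^{-1},s) = \tfrac{1}{G(\nu^{-1})^2}\big(\tfrac{p^{2s}}{(\alpha_1^\circ)^2\alpha_2^\circ\beta_2^\circ}\big)^r$.

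The main obstacle will be the bookkeeping of normalisation constants, especially in the ramified case: one must track the Gauss-sum conventions ($G(\nu)$ versus $G(\nu^{-1})$ and the sign $\nu(-1)$), the factor $\nu(-\sqrt{D})$ coming from the $\sqrt{D}$-twisted identification of Whittaker models at $p$, and the various volume normalisations, in order to land on precisely $\tfrac{p}{p+1}\nu(-\sqrt{D})$. The structural part -- unfolding to the torus, the geometric-series evaluation of $L_{\As}$, and the identification of the $U_{\p_1}^t$-stabilisation with the modified Euler factor -- is routine given \cref{prop:localAsai}, \cref{thm:casselman}, and the analysis of \S\ref{sect:specialSchwartz}.
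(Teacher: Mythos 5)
The paper does not actually prove this proposition: it declares the computation ``rather standard'' and points to \cite{chenhsieh}, so there is no argument of record to compare yours against. Your outline is exactly the standard Rankin--Selberg unfolding that this citation gestures at, and its skeleton is sound: the prefactor $\nu(-\sqrt{D})$, the decomposition of $\widehat{\phi}_\nu$ (unit-ball-minus-correction when $r=0$, Gauss sum supported on $p^{-r}\Zp^\times$ when $r\ge 1$), the attribution of the two $\bigl(1-\tfrac{p^{s-1}}{\alpha_1^\circ\gamma_2^\circ}\bigr)$ factors to the restriction $\ch_{\Zp^\times}$ versus $\ch_{\Zp}$ in the first variable, of the two $\bigl(1-\tfrac{\beta_1^\circ\gamma_2^\circ}{p^s}\bigr)$ factors to the stabilisation $(1-\beta_1/U_{\p_1}^t)$, and of $\tfrac{p}{p+1}$ to the volume of the relevant cell are all consistent with what the computation produces, as is the ramified-case answer $\tfrac{1}{G(\nu^{-1})^2}\bigl(\tfrac{p^{2s}}{(\alpha_1^\circ)^2\alpha_2^\circ\beta_2^\circ}\bigr)^r$ with $L_{\As}(\Pi_p\times\hat\nu_p^{-1},s)=1$.

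One point deserves correction, though it does not break the argument. Since $\phi_\mu(0)=0$, the section $f^{\Phi_{p,\mu,\nu}}$ is supported where the lower-left entry $c$ is a $p$-adic unit, i.e.\ on the \emph{big} cell --- the complement of the small cell of \cref{sect:specialSchwartz} --- not on a small cell as you write. Consequently the reduction to the torus integral is \emph{not} ``exactly as in'' \cref{prop:zetaintegralimprimitive}, which rests on the section being constant on $B_H\backslash B_H K_0(\p^r)$ and the Whittaker vector being invariant under $\stbt{1}{0}{\p^r}{1+\p^r}$; here one must instead run the Bruhat/Iwasawa decomposition over the big cell and track the action of the lower unipotent translates on the spherical and $\p_1$-stabilised Whittaker functions (this is precisely where the Gauss sums and the powers of $\alpha_1$ enter). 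That step is routine and is carried out in \cite{chenhsieh} and in the analogous split-prime computation of \cite{loefflerwilliams18}, but it is a genuinely different mechanism from the small-cell argument you invoke, and it is also where all of the normalisation constants you defer actually get determined. With that substitution your outline matches the intended proof.
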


  This is a rather standard computation (compare e.g.~\cite{chenhsieh}) so we shall not give the proof in detail here. \fix{For a more general formula implying the result, see Proposition A.7 of \cite{LZ-improved}, which gives a general formula for $\GL_2 \times \GL_2$ zeta-integrals with $U_p'$-eigenvectors in the first $\GL_2$ factor (and arbitrary choices of $W_2$ and $\Phi$) as a product of a $\GL_2$ $\gamma$-factor (which gives the factor $\Ee_p$) and an integral over $\Qp^\times$ involving the product of $W_1$ and a Whittaker function coming from $\Phi$. For the choice of $W_2$ and $\Phi$ made above, the integrand in this torus integral is the characteristic function of $\Z_p^\times$, so the integral is just 1 (see \S A.4 of op.cit.). Compare e.g.~\cite[Prop 8.15]{padicLfnct}, which is an analogous formula in the rather more complicated setting of $\operatorname{GSp}_4 \times \GL_2$ instead of $\GL_2 \times \GL_2$.}

\subsection{Archimedean case}

It remains to compute the Archimedean zeta integral, for which we use the formula of \cref{def:whittakerinf} for $W_\infty$. Since $\hat\nu_\infty$ is trivial on the support of $W_\infty$, we need to compute
\[
Z(W_{\infty}, \Phi_\infty^{(k_1 - k_2)}, s)\coloneqq\int_{Z(\R)N(\R)\backslash \GL_2(\R)}W_\infty(h)f^{\Phi_\infty}(h;1,s)\, \mathrm{d}h.
\]
As shown in \cite[p.85]{jacquet}, since the integrand is invariant under $\R^{\times}\times {\rm SO}_2(\R)$, we can write this as
\[
 f^{\Phi_\infty^{(k_1 - k_2)}}(1; 1, s) \cdot \int_{\R^\times_+} y^{(k_1 + k_2)/2} e^{-4\pi y / \sqrt{D}} y^{s - 1}\, \mathrm{d}^\times y.
\]
We have
\[
 \int_{\R^\times_+} y^{(k_1 + k_2)/2} e^{-4\pi y / \sqrt{D}} y^{s - 1}\, \mathrm{d}^\times y =
 \tfrac{1}{2} \cdot \left( \tfrac{\sqrt{D}}{2}\right)^{s - 1 + \tfrac{k_1 + k_2}{2}}\cdot \Gamma_{\C}\left(s - 1 + \tfrac{k_1 + k_2}{2}\right), \]
where $\Gamma_\C(s)\coloneqq2(2\pi)^{-s}\Gamma(s)$, and similarly
\[ f^{\Phi_\infty^{(k_1 - k_2)}}(1; 1, s) =
i^{k_1 - k_2} 2^{s-\tfrac{k_1 - k_2}{2}}\Gamma_{\C}\left(s + \tfrac{k_1 - k_2}{2}\right),
\]
so we conclude that
\[ Z(W_{\infty} ,{\Phi_\infty}^{(k_1 - k_2)},s) =
 i^{k_1 - k_2} 2^{-k_1} (\sqrt{D})^{s - 1 + (k_1 + k_2)/2} \cdot \Gamma_{\C}(s - 1 + \tfrac{k_1 + k_2}{2}) \Gamma_{\C}(s + \tfrac{k_1 - k_2}{2}).\]

\subsection{The global zeta integral} We can finally put together the zeta integrals of the above sections to prove our main result.

\begin{thm}
Let $\Pi$ be an automorphic representation as above, with central character $\chi_{\Pi}$. Let \fix{$\mathcal{L}_{p, \As}(\Pi)$ be} the one-variable measure defined in Definition \ref{defi:onevariable}. Then for any locally-algebraic $s+\fix{\rho}$ of $\Z_p^\times$, with $s$ an integer such that $\tfrac{1-k_1 + k_2}{2} \le s \le \tfrac{k_1 - k_2}{2}$ and $\fix{\rho}$ of finite order, we have
\[
\mathcal{L}_{p, \As}(\Pi)(s+\fix{\rho}) = \Ee_p(\As(\Pi), \fix{\rho}^{-1}, s) \cdot \frac{\Gamma(s + \tfrac{k_1+k_2-2}{2}) \Gamma(s + \tfrac{k_1 - k_2}{2})}{2^{(k_1-2)} i^{(1-k_2)} (-2\pi i)^{(2s + k_1 - 1)}} \cdot \frac{L_{\As}\left(\Pi, \fix{\rho}^{-1}, s\right)}{\Omega(\Pi)}.
\]
The same holds with $\mathcal{L}_{p, \As}^{\mathrm{imp}}(\Pi)$ and $L_ {\As}^{\mathrm{imp}}(\Pi, \fix{\rho}^{-1}, s)$ in place of the primitive $L$-functions.
\end{thm}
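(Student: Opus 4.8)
The plan is to assemble the local zeta-integral computations of the preceding subsections into a single global identity, and then to verify that the correction factor \eqref{eq:sillyfactor} is exactly what is needed to absorb the remaining extraneous constants. There is no new conceptual input: everything follows from the local propositions already proved.

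First I would unwind \cref{defi:onevariable}. By construction $\mathcal{L}_{p,\As}(\Pi)$ is a fixed $\Lambda_E$-linear combination of the zeta-integrals $\mathcal{Z}_\Lambda(W_\f^{(p)},\Phi^{(p)};\kappa)$, the combination being dictated by writing $1$ as a $\C[X,X^{-1}]$-combination of the local polynomials $C_\ell(W_\ell,\Phi_\ell;X)$ (which is possible by part~(i) of \cref{prop:localAsai}), multiplied by the unit \eqref{eq:sillyfactor}. Specialising at a locally-algebraic character $s+\nu$ and inserting the interpolation formula \eqref{eq:globalint}, this becomes $\tfrac{1}{\Omega(\Pi)}$ times the global zeta-integral $Z\!\left(W_{\f,\alpha_1}W_\infty,\ \Phi^{(p)}\Phi_{p,\nu^{-1},\nu}\Phi_\infty^{(k_1-k_2)};\ \nu,s\right)$, which by \cref{prop:localAsai} factors as an Euler product. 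By the choice of $\Lambda_E$-combination the local terms $C_\ell$ at the finite places $\ell\neq p$ cancel, and since $C_\ell=1$ at the unramified places the product collapses to
\[
 \frac{L_{\As}(\Pi,\nu^{-1},s)}{\Omega(\Pi)}\cdot\frac{Z(W_{p,\alpha_1},\Phi_{p,\nu^{-1},\nu};\hat\nu_p,s)}{L_{\As}(\Pi_p\times\hat\nu_p^{-1},s)}\cdot Z(W_\infty;\Phi_\infty^{(k_1-k_2)},s),
\]
all multiplied by the value of \eqref{eq:sillyfactor} at $s+\nu$.

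Next I would insert the two local evaluations obtained above. By \cref{prop:localintp} the middle factor equals $\tfrac{p}{p+1}\,\nu(-\sqrt{D})\,\E_p(\As(\Pi),\nu^{-1},s)$, and the archimedean computation of the previous subsection gives $Z(W_\infty;\Phi_\infty^{(k_1-k_2)},s)=i^{k_1-k_2}2^{-k_1}(\sqrt{D})^{s-1+(k_1+k_2)/2}\Gamma_{\C}(s-1+\tfrac{k_1+k_2}{2})\Gamma_{\C}(s+\tfrac{k_1-k_2}{2})$, while the value of \eqref{eq:sillyfactor} at $s+\nu$ is $\tfrac{p+1}{p}(\sqrt{D})^{1-(k_1+k_2)/2-s}\nu(\sqrt{D})^{-1}(-1)^s\nu(-1)$. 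Multiplying everything together: the factors $\tfrac{p}{p+1}$ and $\tfrac{p+1}{p}$ cancel, the powers of $\sqrt{D}$ cancel, the character values combine as $\nu(-\sqrt{D})\nu(\sqrt{D})^{-1}\nu(-1)=\nu(-1)^2=1$, and rewriting $\Gamma_{\C}(z)=2(2\pi)^{-z}\Gamma(z)$ converts the product of the two $\Gamma_{\C}$-factors into $4\,(2\pi)^{-(2s+k_1-1)}\Gamma(s+\tfrac{k_1+k_2-2}{2})\Gamma(s+\tfrac{k_1-k_2}{2})$. A short elementary manipulation of the leftover powers of $2$, $i$ and $-1$, using $k_1\equiv k_2\bmod 2$, identifies the remaining constant with $\big(2^{k_1-2}i^{1-k_2}(-2\pi i)^{2s+k_1-1}\big)^{-1}$; this gives the claimed interpolation formula.

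For the imprimitive statement one repeats the argument with the imprimitive test data: at the places $\ell\nmid Np$ one has $C_\ell=1$ by part~(ii) of \cref{prop:localAsai}, whereas at the ramified places \cref{prop:zetaintegralimprimitive} identifies the local integral $Z(W^{\new},\Phi_{r_0},s)$ with $L^{\imp}_{\As}(\Pi_\ell,\nu_\ell^{-1},s)$, so the part of the Euler product away from $p$ assembles to $L^{\imp}_{\As}(\Pi,\nu^{-1},s)$; the $p$-adic and archimedean factors, together with the imprimitive correction factor $\tfrac{p+1}{p}\,\sigma(\sqrt{D})^{-1}$, are treated exactly as before. The only genuinely delicate point in the whole argument is bookkeeping: one must check that \eqref{eq:sillyfactor} (resp.\ its imprimitive analogue) cancels the factor $\tfrac{p+1}{p}$, the surplus powers of $\sqrt{D}$, and the sign ambiguities thrown off by the $p$-adic and archimedean local integrals, so that no input beyond the cited local propositions is needed.
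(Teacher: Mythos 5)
Your proposal is correct and follows essentially the same route as the paper's own proof: unwind Definition \ref{defi:onevariable}, factor the global zeta integral into local ones, identify the factors away from $p$ with local $L$-factors via the choice of test data (resp.\ via Propositions \ref{prop:localAsai} and \ref{prop:zetaintegralimprimitive} in the imprimitive case), and insert Proposition \ref{prop:localintp} together with the archimedean computation. The only difference is that you carry out the final bookkeeping of the powers of $\sqrt{D}$, $2$, $i$ and the character values explicitly, which the paper compresses into ``combining these gives the stated result.''
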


\begin{proof}
 We first consider the primitive $L$-function. By definition and the discussion in \cref{sect:onevarpadic}, we have \fix{ for $n \gg 0$}
 \begin{longfix}
  \begin{multline*}
   \mathcal{L}_{p, \As}(\Pi)(s+\rho) = \tfrac{(p+1)}{p} \nu^{-1}(-\sqrt{D}) \sqrt{D}^{1 - (k_1 + k_2)/2 -s}  (-1)^s
   \times \\ \frac{L_{\mathrm{As}}(\Pi \times \rho^{-1}, s)}{\Omega_\infty(\Pi)} \cdot \frac{Z(W_{p, \alpha_1}[n], \Phi_{p, \rho^{-1}, \rho}; \rho_p, s)}{L_{\As}(\Pi_p \times \rho_p^{-1}, s)} \cdot Z(W_\infty; \Phi_\infty^{(k_1 - k_2)}, s).
  \end{multline*}
  Substituting the formulae we have just obtained for the local integrals at $p$ and at $\infty$ gives the claimed formula.

  In the case of the imprimitive $L$-function, we use \cref{cor:zetaint}, which gives a formula for $\mathcal{L}_{p, \As}^{\mathrm{imp}}(\Pi)(s+\rho)$ which is the product of the formula above for $\mathcal{L}_{p, \As}(\Pi)(s+\rho)$ and the extra terms $\prod_{\ell \ne p} C_\ell(W_\ell, \Phi_\ell, \ell^{-s} \rho(\ell)^{-1})$, where the $W_\ell$ and $\Phi_\ell$ are as specified in \cref{sect:imprimtest}. So it suffices to show that for these choices, for each $\ell \ne p$ we have
  \[ C_\ell(W_\ell, \Phi_\ell, \ell^{-s} \rho(\ell)^{-1}) = \frac{L^{\mathrm{imp}}_{\mathrm{As}}(\Pi_\ell \times \hat\rho_\ell^{-1}, s)}{L_{\mathrm{As}}(\Pi_\ell \times \hat\rho_\ell^{-1}, s)}.\]
  Since $\hat\rho_\ell$ is unramified, it suffices to prove the relation for $\hat\rho_\ell = 1$ and all complex $s$, which is exactly \cref{prop:zetaintegralimprimitive}.
 \end{longfix}
\end{proof}

\subsection*{Acknowledgements}

 This paper was written while all three authors were in residence at the Mathematical Sciences Research Institute in Berkeley, California, for the program ``Algebraic Cycles, $L$-values and Euler Systems'' during the spring of 2023 (supported by NSF grant no.~DMS-1928930). It is a pleasure to thank MSRI and its staff for providing such a splendid working environment.

\providecommand{\bysame}{\leavevmode\hbox to3em{\hrulefill}\thinspace}
\renewcommand{\MR}[1]{%
 MR \href{http://www.ams.org/mathscinet-getitem?mr=#1}{#1}.
}
\newcommand{\articlehref}[2]{\href{#1}{#2}}

\end{document}